\theoremstyle{definition}
\newtheorem{definition}{Definition}[section]
\numberwithin{definition}{section}
\newtheorem{theorem}{Theorem}[section]
\numberwithin{theorem}{section}
\newtheorem{corollary}{Corollary}[theorem]
\newtheorem{lemma}{Lemma}[section]
\numberwithin{equation}{section}
\theoremstyle{remark}
\newtheorem*{remark}{{Remark}}
\title{\bf Evolution of mesoscopic interactions and scattering solutions of the Boltzmann equation}
\author{Nima Moini \\ University of California, Berkeley}
\date{}
\begin{document}

\begin{titlingpage}
\maketitle
\begin{abstract}
The purpose of this paper is to study the evolution of moving interacting particles on the mesoscopic scale. We will introduce an uncertainty principle and a new priori bound for the evolution of particles subject to a general mesoscopic interaction and use this setting to  demonstrate evidence for the universal dispersion of particles in the absence of the conventional methods like velocity averaging lemmas or Strichartz-type estimates. We will develop a scattering theory in generality and use its frame work to show the existence and uniqueness of a class scattering solutions to the Boltzmann equation with asymptotic stability and completeness in the $L^{\infty}$ setting.
\end{abstract}
\setcounter{tocdepth}{1}
\tableofcontents
\end{titlingpage}

\section{Introduction}
The purpose of this paper is to study the evolution of interacting moving bodies on the mesoscopic scale. Consider a differential equation of the form:   
\begin{equation}
\begin{aligned}
    \partial_t f(x, \xi, t)+\xi.\nabla_x f(x, \xi, t) &= I(f, x, \xi, t) \\  f(x, \xi, 0)&=f_{0}(x,\xi)
\end{aligned}
\end{equation}
In the equation defined above, $x,\xi \in {\rm I\!R}^n$ and $t \in [0,\infty)$. We expect $f(x,\xi,t)$ to be non-negative and interpret it as the density or amplitude of bodies or particles with velocity $\xi$ located at $x$. The left hand side of this equation resembles the principle of inertia, which implies that particles will get transported linearly along the trajectory of their velocities, while the right hand side is representative of change via possibly non linear interactions of particles. The interaction $I(f,x,\xi,t)$ is expected be a mesoscopic interaction in the sense defined below.

\begin{definition}\label{interaction} $I(f,x,\xi,t)$ is a {\it mesoscopic interaction} if: 
\begin{align*}
                        \int_{{\rm I\!R}^n} I(f, x, \xi, t) \ d\xi&= 0 \\ \int_{{\rm I\!R}^n} I(f, x, \xi, t) |\xi|^2 \ d\xi&=0 \\ \int_{{\rm I\!R}^n} I(f, x, \xi, t) \xi_{i} \ d\xi&=0 \ \ \ 1\leq i \leq n
\end{align*}
\end{definition}
\indent The characteristic of the partial differential equation (1.1) initiating from arbitrary $x,\xi \in {\rm I\!R}^n$ is the line $(x + t\xi,\xi, t) \subset {\rm I\!R}^n\times{\rm I\!R}^n\times {\rm I\!R}$. It is possible to interpret this equation as an infinite dimensional system of ordinary differential equations which start from the same initial value $f_{0}(x,\xi)$ and evolve along these characteristics: 
\begin{equation}
\begin{aligned}
    \frac{d}{dt}f(x+t\xi,\xi,t)&=I(f,x+t\xi,\xi,t) \\
    f(x,\xi,0)&=f_{0}(x,\xi)
\end{aligned}
\end{equation}
\indent One immediate consequence of the equation above and the definition of a mesoscopic interaction is that, for the solutions of equation (1.1), the total amount of mass, momentum and energy remain invariant. For example, the conservation of mass can be proven by the argument below and the other two conservation laws are similar.
\begin{multline*}
    \frac{d}{dt}\int_{{\rm I\!R}^n}\int_{{\rm I\!R}^n} f(x, \xi, t)  \ dxd\xi= 
\int_{{\rm I\!R}^n}\int_{{\rm I\!R}^n}\frac{d}{dt}f(x+t\xi,\xi,t)\ dxd\xi=\\ \int_{{\rm I\!R}^n}\int_{{\rm I\!R}^n} I(f, x+t\xi, \xi, t)  \ dxd\xi=\int_{{\rm I\!R}^n}\int_{{\rm I\!R}^n} I(f, x, \xi, t)  \ dxd\xi=0 
\end{multline*}

\indent The narrative of this writing is grounded in concrete examples and the generality of the theory is built upon that. Some of the most important examples of mesoscopic interactions are the linear transport equation for $I=0$ and different variants of the Boltzmann equation where $I$ is the Boltzmann collision operator. The objective here is to study the mesoscopic scale in more generality and then return to the Boltzmann equation in the final section. It is not possible to provide a complete description of the new results in this introduction, because of their dependence on new definitions that will appear alongside the new methods. We will briefly compare the results of each section to the relevant mathematical literature in the field:  \\ 

\indent {\bf Section 2:} We will introduce  the  concepts  of  {\it uncertainty} and {\it angular momentum} associated to the solutions of equation (1.1) and define a new priori bound named {\it relative angular norm}. Under a set of minimal  a priori assumptions on mass, momentum and energy, the results of this section remain valid for an arbitrary mesoscopic interaction, including the Boltzmann equation. 
    \begin{itemize}
        \item Angular momentum increases linearly over time with a slope equal to the amount of total energy, furthermore the relative angular norm remains bounded and the uncertainty goes to infinity with time.
         \item The total amount of energy within any bounded set of the spatial variable is integrable over time, and the evolution of moving interacting particles is subject to dispersion. In particular, particles with different velocities have a tendency to separate from one another.
        \item The total amount of mass, as time goes to infinity, will concentrate inside a specific collection of cones with an arbitrarily small apex angle. This concentration and the increasing of uncertainty are tailored together. 
        
        \item[$\diamond$] The mathematical constructs of Section 2 develop an area between kinetic theory and quantum mechanics which was previously a no man's land. We will not use the conventional methods of kinetic theory such as velocity averaging, Strichartz estimates or the notion of entropy \cite{MR2083636,MR1942465,MR2043752}. The general formalism of equation (1.1) has been studied before by Lions, Perthame, Tadmor, Tao, Tartar and other authors using these  methods \cite{MR1101105,MR1191915, MR1266203,MR1099693,MR1201239,MR1284790,tartar,MR1166050,tadmortao,MR923047,MR1127927}. They develop a kinetic point of view for the single or multi-dimensional systems of conservation laws, comparable to equation (1.1) subject to a mesoscopic interaction. These authors achieve integrability results and demonstrate dispersive and regularizing effects. Their ideas inspired other mathematicians and appear repeatedly in the literature of the field \cite{imbert2020regularity, arsenio2020energy, MR3551261, MR2083859}. In contrast, we will demonstrate dispersion based on an entirely different intuition, namely a principle of uncertainty, the angular momentum and Morawetz type estimates. The methods of this paper have analogies to the theory of linear and non linear Schrodinger’s equations, as Tao \cite{MR2233925} describes very well. We will achieve integrability results and demonstrate dispersion in the absence of these conventional methods.
\end{itemize}

\indent {\bf Section 3:} We will investigate the role of a norm in measuring dispersion of particles and develop a scattering theory for a general mesoscopic interaction. In order to do so we will provide two new definitions, {\it observer's norm} and {\it scattering norm}. In addition, we will introduce two concepts of solution for equation (1.1), {\it scattering solution} and {\it uniform scattering solution}. 
    \begin{itemize}
    \item Any scattering solution will converge to a unique solution of the linear transport equation in a specific sense. This convergence will be uniform over the spatial variable for a uniform scattering solution.  
    \item We will reveal how the contents of Section 3 provide a natural route for creating scattering solutions. These ideas will be used in Section 4 in the specific case of the Boltzmann equation. 
    \item[$\diamond$] Bardos, Gamba, Golse and Levermore \cite{MR3535892} have obtained results on scattering theory of the Boltzmann equation using $L^1$ norms. In particular, they prove an almost everywhere convergence to a unique linear state and demonstrate that this linear state is not necessarily the state of minimal entropy. We will follow a distinct approach that when paired with the findings of Section 4, will improve results in the $L^{\infty}$ setting. For instance we will show existence of solutions that scatter to linear states arbitrarily close to any given solution of the linear transport equation chosen from an appropriate space.
    \end{itemize}
    
    \indent {\bf Section 4:} In the last section we will use the insights from Section 3 to introduce two new concepts, {\it scattering frame} and {\it uniform scattering frame}. We will define an {\it isotropic radius} associated to these frames and use this setup to create a class of  global in time solutions to the Boltzmann equation, namely  the scattering solutions. The results of Section 2 and 3 are applicable to this class of solutions.
          
\begin{itemize}
        \item Accompanying any scattering frame (uniform scattering frame) there exists a unique scattering (uniform scattering) solution associated to any initial value within the isotropic radius of that frame.
        
        \item We will show that the Maxwellian can be used as a scattering frame for the Boltzmann equation and conclude existence and uniqueness of scattering solutions for the Boltzmann equation in the $L^{\infty}$ setting.
        
         \item[$\diamond$] Illner, Shinbrot, Arsenio and others \cite{MR760333,MR2770020,MR3535892} create global in time solutions to the Boltzmann equation for sufficiently small initial values based on the idea of comparison with a linear state. In this paper an initial value is measured against a scattering frame of reference and the concept of sufficiently small is quantified by the notion of isotropic radius. This setup makes the findings of Section 3 applicable in Section 4. We will improve a classical calculation (Lemma \ref{lem}) linked to the existence theory of the Boltzmann equation and create a class of global in time solutions which have scattering theory in the $L^{\infty}$ setting with asymptotic completeness and stability results.    
         \end{itemize}

\indent  The rest of this section will provide motivation for analyzing equation (1.1) when it is subject to a general mesoscopic interaction. We will describe the importance of the Boltzmann equation in mathematics while providing evidence that its signature properties are identical for a general mesoscopic interaction. In particular, this similarity is fundamentally rooted in conservation laws and therefore not reliant on the specific structure of the Boltzmann equation. One could argue that the role of the Boltzmann equation in kinetic theory is similar to that of the Bernoulli trials within the theory of probability. \\ 

\indent When we think of particles as discrete elastic bodies, that is the microscopic scale. In that setting, classical mechanics governs the evolution of interacting particles, namely the principle of inertia and conservation laws of momentum and energy. On the other hand, we have partial differential equations that represent the evolution of macroscopic descriptions, where particles are no longer discrete objects. The crucial examples of such equations are Navier–Stokes and Euler equations in the context of fluid dynamics. The role of the mesoscopic scale, with the Boltzmann equation as its central example, is to create a connection between the microscopic and macroscopic descriptions. \\

\indent A rigorous  derivation of the Boltzmann equation from the dynamics of the microscopic scale was created first by Lanford \cite{MR0441164, MR0479206} in an asymptotic regime that is often called the Boltzmann-Grad limit \cite{MR3608289, MR4131018, MR3625187}, which is subject to a short time interval of validity. Without limitation on time but with extra constraints on the initial data, Illner and Pulvirenti prove the validity of the Boltzmann equation for a rare gas in a vacuum in dimensions 2 and 3  based on the notion of single particle distribution and BBGKY-hierarchy \cite{MR913933, MR985619}. Lanford's ideas were later improved in the landmark works of Raymond, Gallagher and others \cite{gallagher2013newton}. Another successful and unique approach for the justification of the Boltzmann equation, without limitation on time but under stochastic assumptions for the microscopic dynamics, appears in the works of Rezakhanlou \cite{MR2076921}. Lanford \cite{MR0441164} writes: \\

\begin{adjustwidth}{25pt}{25pt}
{\it \small
The conceptual foundations of the Boltzmann equation seem
to me to merit careful study not so much for their own sake as because
the Boltzmann equation is a prototype of a mathematical construct central to the theory of time dependent phenomena in large systems}\\
\end{adjustwidth}

\indent Similar to the attempts for justification of the Boltzmann equation from the microscopic scale, there has been a vast amount of research dedicated to the relationship between the Boltzmann equation and the aforementioned higher scale macroscopic equations. Assuming the validity of laws of classical mechanics in the microscopic scale is self-evident, the hierarchy described before demonstrates that the Boltzmann equation can be a scaffold, to illuminate the validity of the most important equations appearing in fluid mechanics \cite{MR1842343}, which were originally derived independent of the Boltzmann equation. The transition from a mesoscopic scale to a macroscopic one is called a hydrodynamic limit \cite{MR2683475, MR3150645}. More than often, mathematical arguments related to this transition remain valid if we replace the Boltzmann collision operator with an arbitrary mesoscopic interaction. For example in \cite{senji, MR1842343}, we see that it is possible, at least formally, to derive the compressible Euler equations from the Boltzmann equation, using some constitutive relations in fluid dynamics, yet an identical argument is also valid for any solution of equation (1.1) with a mesoscopic interaction. \\

Intrinsic properties of the Boltzmann equation, independent of their relationship to the microscopic or macroscopic scales, lead to a series of heuristic arguments that establish the relevance of the equation to the actual physical problem of interacting bodies. For example, conservation of mass, momentum and energy remain true for any mesoscopic interaction and are not solely a property of the Boltzmann collision operator. Another feature of the Boltzmann equation is the monotonicity of the famous Boltzmann's entropy formula, which can be interpreted as a mathematical formulation of the second law of thermodynamics. However we will show that in general the solutions of the Boltzmann equation do not converge to a state of thermodynamic equilibrium or Maxwellian. It is interesting to point out that the concept of Maxwellian as an equilibrium state existed before Boltzmann. It is possible to argue that the specific structure of the Boltzmann collision operator which leads to the monotonicity of the Boltzmann's entropy formula is a result of a genius reverse engineering. The conservation laws are primordial compared to the specific structure of the interactions. We will show there exists other monotone quantities besides entropy for any mesoscopic interaction, that one can harness to study the evolution of solutions to equation (1.1) as well as the Boltzmann equation. Tartar \cite{MR2397052} writes: \\

\begin{adjustwidth}{25pt}{25pt}
{\it \small
Of course, many consider the Boltzmann equation as an answer[...]but the Boltzmann equation has a similar defect than thermodynamics[...]. In order to study how irreversibility occurs it is important to start from conservative models which are reversible[...]and to study if something has really been lost, or if a better account can be given of what other models declare lost.}\\
\end{adjustwidth}

\indent We intend this writing to be clear and accessible for a reader with background knowledge in the field and hope that it reflects the sage advice of Halmos \cite{MR277319} on mathematical writing. \\ 
\subsection*{Acknowledgements}
I want to express my gratitude for the outstanding mentorship I received throughout the creation of this work from Daniel Tataru. In addition, I am deeply thankful for the enduring encouragement of Mehrdad Shashahani as well as the love and support of Amanda Jeanne Tose (AJT), Kaveh and Zahra.

\newpage
\section{Dispersive evolution of interacting particles}
Throughout this section we assume $f$ is a non negative solution of equation (1.1) for some mesoscopic interaction $I$. We expect that at time zero the mass, momentum and energy as represented below are well defined quantities. 

\begin{equation}
\begin{aligned}
  ({\it mass})&  &{\bf M} &=\int_{{\rm I\!R}^n}\int_{{\rm I\!R}^n} f(x,\xi, 0) \ dxd\xi < \ \infty  \\
     ({\it energy})&  & {\bf E} &=\int_{{\rm I\!R}^n}\int_{{\rm I\!R}^n} f(x,\xi, 0)|\xi|^2 \ dxd\xi < \ \infty
     \\
     ({\it momentum})&  &{\bf V} &=\int_{{\rm I\!R}^n}\int_{{\rm I\!R}^n} f(x,\xi, 0)\xi \ dxd\xi < \ \infty
\end{aligned}
\end{equation}  

Since $|\bf{V}| \leq \frac{1}{2}(\bf{M} + \bf{E})$, the boundedness of mass and energy implies the boundedness of momentum. As a consequence of Definition \ref{interaction} these quantities remain invariant over time. Furthermore we assume that at time zero the following integral, which  represents the localization of mass, is convergent:
\begin{align}
     \int_{{\rm I\!R}^n}\int_{{\rm I\!R}^n} f(x,\xi, 0)|x|^2 \ dxd\xi < \infty  
\end{align}

\indent Results that appear in this section hold true under the assumptions above and are independent of the specific structure of the mesoscopic interaction $I$, consequently they are true for the solutions of the Boltzmann equation as well. The discussion about the existence of such solutions is omitted until the next sections.

\begin{definition}\label{angular momentum}
 Let $A(t)$ be {\it the angular momentum} of $f$ at time $t$: 
\begin{align*}
    A(t) = \int_{{\rm I\!R}^n}\int_{{\rm I\!R}^n} f(x,\xi, t)x\cdot\xi \ dxd\xi  
\end{align*}
\end{definition}

\begin{theorem}\label{angularthm} For any solution of equation (1.1) like $f$ satisfying (2.1), the angular momentum is a linear function of time increasing proportional to the total amount of energy({\bf E}):
\begin{align*}
    A(t)=\int_{{\rm I\!R}^n}\int_{{\rm I\!R}^n} f(x, \xi, t)x\cdot\xi \ dxd\xi = A(0) + t{\bf E}
\end{align*} 
\end{theorem}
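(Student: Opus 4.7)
The plan is to differentiate $A(t)$ in time along characteristics and exploit two cancellations: the mesoscopic condition $\int I\,\xi_i\,d\xi=0$ kills the contribution of the interaction, and what remains is, by conservation of energy, exactly $\mathbf{E}$.

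First I would change variables $x = y + t\xi$ inside the integral defining $A(t)$ to write
\begin{align*}
A(t) = \int_{{\rm I\!R}^n}\int_{{\rm I\!R}^n} f(y+t\xi,\xi,t)\,(y+t\xi)\cdot\xi\; dy\,d\xi,
\end{align*}
so that the $f$ factor is differentiable in $t$ directly via the characteristic ODE (1.2): $\frac{d}{dt} f(y+t\xi,\xi,t) = I(f,y+t\xi,\xi,t)$. Differentiating under the integral sign produces two pieces. The piece coming from $\partial_t f$ along characteristics, once we change variables back to $x = y+t\xi$, equals $\int\!\int I(f,x,\xi,t)\, x\cdot\xi\; dx\,d\xi$; for each fixed $x$ the $\xi$--integral is $x\cdot\int I(f,x,\xi,t)\xi\,d\xi$, which vanishes by the momentum clause of Definition \ref{interaction}. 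The piece coming from the explicit $t$ in the factor $(y+t\xi)\cdot\xi$ contributes $\int\!\int f(x,\xi,t)|\xi|^2\;dx\,d\xi$, which is simply $\mathbf{E}$ thanks to conservation of energy already established from Definition \ref{interaction}. Integrating $A'(t)=\mathbf{E}$ from $0$ to $t$ closes the proof.

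The main obstacle is not the algebraic identity but the justification of differentiation under the integral, since $x\cdot\xi$ is unbounded. The natural way around this is to note that Cauchy--Schwarz gives $|A(t)| \le \bigl(\int\!\int f|x|^2\bigr)^{1/2}\bigl(\int\!\int f|\xi|^2\bigr)^{1/2}$, so I only need $\int\!\int f(x,\xi,t)|x|^2\,dx\,d\xi$ to remain finite. This can be handled by running the exact same characteristic computation on this second moment: the $I$ term drops out because $|x|^2$ does not depend on $\xi$ and $\int I\,d\xi=0$, and the remaining term equals $2A(t)$. Thus $\int\!\int f|x|^2$ satisfies a closed ODE in tandem with $A$, showing it is a polynomial of degree at most two in $t$ and hence finite for all $t\ge 0$, which legitimizes the interchange of differentiation and integration above. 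An approximation argument (truncation in $x$ and $\xi$, then passage to the limit using the uniform second-moment bound just obtained) completes the rigorous justification.
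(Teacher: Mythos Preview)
Your argument is correct and is essentially the differential version of the paper's own proof: both perform the change of variables $x\mapsto x+t\xi$ to characteristics, after which the paper integrates in $t$ (obtaining $\int\!\int I(x,\xi,s)(x-s\xi)\cdot\xi\,dx\,d\xi=0$ via the momentum and energy clauses) while you differentiate (obtaining $\int\!\int I(x,\xi,t)\,x\cdot\xi\,dx\,d\xi=0$ via the momentum clause alone) and then integrate $A'(t)=\mathbf{E}$. Your added discussion of the second spatial moment to justify the interchange is a welcome rigor step the paper leaves implicit, though note it effectively invokes hypothesis (2.2), which the theorem as stated does not list.
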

\begin{proof} It is possible to compute the values of $f$ at an arbitrary time like $t$ by integrating the effects of interactions along the characteristics of the equation. We will carry out this idea using the change of variable $x \Longrightarrow x + t\xi$, it follows: 
\begin{multline*}
      A(t)=\int_{{\rm I\!R}^n}\int_{{\rm I\!R}^n} f(x, \xi, t)x\cdot\xi \ dxd\xi =  \int_{{\rm I\!R}^n}\int_{{\rm I\!R}^n} f(x + t\xi, \xi, t)(x+t\xi)\cdot\xi \ dxd\xi \\
    =\int_{{\rm I\!R}^n}\int_{{\rm I\!R}^n} f(x+t\xi, \xi, t)x\cdot\xi \ dxd\xi + t \int_{{\rm I\!R}^n}\int_{{\rm I\!R}^n} f(x + t\xi, \xi, t)|\xi|^2 \ dxd\xi
\end{multline*}
Consequently from equation (1.1) we get:  
\begin{multline*}
    A(t)= \int_{{\rm I\!R}^n}\int_{{\rm I\!R}^n} f(x, \xi, 0)x\cdot\xi \ dxd\xi + \int_{0}^{t} \int_{{\rm I\!R}^n}\int_{{\rm I\!R}^n} I(x+s\xi, \xi, s)x\cdot\xi \ dxd\xi ds \\ +t \int_{{\rm I\!R}^n}\int_{{\rm I\!R}^n} f(x, \xi, t)|\xi|^2 \ dxd\xi
\end{multline*}
\noindent Note that Definition \ref{interaction} implies: 
\begin{align*}
     \int_{{\rm I\!R}^n}\int_{{\rm I\!R}^n} I(x+s\xi, \xi, s)x\cdot\xi \ dxd\xi  = \int_{{\rm I\!R}^n}\int_{{\rm I\!R}^n} I(x, \xi, s)(x-s\xi)\cdot\xi \ dxd\xi = 0
\end{align*}
Using the computation above for $A(t)$ we get:
\begin{multline*}
         A(t)= \int_{{\rm I\!R}^n}\int_{{\rm I\!R}^n} f(x, \xi, 0)x\cdot\xi \ dxd\xi + \int_{0}^{t} \int_{{\rm I\!R}^n}\int_{{\rm I\!R}^n} I(x, \xi, s)(x-s\xi)\cdot\xi \ dxd\xi ds \\ + t \int_{{\rm I\!R}^n}\int_{{\rm I\!R}^n} f(x, \xi, 0)|\xi|^2 \ dxd\xi = A(0) + t{\bf E}
\end{multline*} 
\end{proof}

We will continue with a lemma which describes how localization of mass changes along the evolution of solutions to equation (1.1) and show that it is parabolic with respect to time.  

\begin{lemma}\label{consvlemma} For any solution of equation (1.1) like $f$ satisfying (2.1) and (2.2), the identities below hold true:
\begin{align*}
 {\bf i.}& \int_{{\rm I\!R}^n}\int_{{\rm I\!R}^n} f(x,\xi,t)|x|^2 \ dxd\xi=\int_{{\rm I\!R}^n}\int_{{\rm I\!R}^n} f(x,\xi,0)|x|^2 \ dxd\xi + 2tA(0) + t^2{\bf E} \\ 
     {\bf ii.}& \int_{{\rm I\!R}^n}\int_{{\rm I\!R}^n}  f(x, \xi, t)|x- t\xi|^2\ dxd\xi=\int_{{\rm I\!R}^n}\int_{{\rm I\!R}^n} f(x, \xi, 0)|x|^2\ dxd\xi
\end{align*}
\end{lemma}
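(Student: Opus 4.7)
The proof of (i) will follow the same template as Theorem~\ref{angularthm}: shift $x\mapsto x+t\xi$ so that integration is carried out along characteristics, then use the integrated form (1.2) of the equation together with the three vanishing moment conditions in Definition~\ref{interaction}. Starting from
\[
\int\!\!\int f(x,\xi,t)|x|^2\,dxd\xi=\int\!\!\int f(x+t\xi,\xi,t)\,|x+t\xi|^2\,dxd\xi,
\]
I expand $|x+t\xi|^2=|x|^2+2t\,x\!\cdot\!\xi+t^2|\xi|^2$. This splits the integral into three pieces. The third piece is $t^2$ times the total energy, which by conservation equals $t^2\mathbf{E}$. The middle piece is $2t$ times the quantity that already appeared in the proof of Theorem~\ref{angularthm}; by that same computation it equals $2tA(0)$ (equivalently, $2t\bigl(A(t)-t\mathbf{E}\bigr)$).

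The remaining piece is $\int\!\!\int f(x+t\xi,\xi,t)|x|^2\,dxd\xi$, and this is where the mesoscopic hypothesis enters. Using (1.2) I write $f(x+t\xi,\xi,t)=f_0(x,\xi)+\int_0^t I(f,x+s\xi,\xi,s)\,ds$ and then, in the inner integral, change variables $x\mapsto x-s\xi$ to convert $I(f,x+s\xi,\xi,s)|x|^2$ into $I(f,x,\xi,s)|x-s\xi|^2$. Expanding $|x-s\xi|^2=|x|^2-2s\,x\!\cdot\!\xi+s^2|\xi|^2$ and performing the $\xi$-integration first gives three terms, each of which kills one of the three conditions in Definition~\ref{interaction} (zero $d\xi$-integral, zero $\xi_i\,d\xi$-integral, and zero $|\xi|^2\,d\xi$-integral respectively). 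So this piece is simply $\int\!\!\int f_0(x,\xi)|x|^2\,dxd\xi$, and assembling the three pieces proves (i).

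Identity (ii) then comes for free. The fastest route is to repeat the shift $x\mapsto x+t\xi$ directly in the left-hand side of (ii):
\[
\int\!\!\int f(x,\xi,t)|x-t\xi|^2\,dxd\xi=\int\!\!\int f(x+t\xi,\xi,t)|x|^2\,dxd\xi,
\]
and this is precisely the piece already evaluated above, equal to $\int\!\!\int f_0(x,\xi)|x|^2\,dxd\xi$. Alternatively one can expand $|x-t\xi|^2=|x|^2-2t\,x\!\cdot\!\xi+t^2|\xi|^2$ and combine (i) with Theorem~\ref{angularthm} and conservation of energy; the cross term cancels $2t A(0)$ against $-2tA(t)=-2tA(0)-2t^2\mathbf{E}$, the quadratic terms collapse, and only the initial localization survives.

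The only genuine subtlety is the bookkeeping in the interaction term: one needs to verify that the change of variables is legitimate (which uses the localization assumption (2.2) to ensure absolute convergence and permit Fubini) and to recognize that after the shift the polynomial weight $|x-s\xi|^2$ decomposes into a combination of $1$, $\xi_i$, and $|\xi|^2$ with $x$-dependent coefficients, so that the three mesoscopic identities apply one by one. Once that is understood, both identities are essentially algebraic consequences of (1.2) and Definition~\ref{interaction}.
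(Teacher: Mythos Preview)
Your argument is correct and matches the paper's proof almost verbatim; the only difference is presentational order. The paper establishes (ii) first (precisely your ``remaining piece'' computation showing $\int\!\!\int f(x+t\xi,\xi,t)|x|^2\,dxd\xi=\int\!\!\int f_0(x,\xi)|x|^2\,dxd\xi$ via the expansion of $|x-s\xi|^2$ against the three mesoscopic identities) and then plugs that into the expansion of $|x+t\xi|^2$ together with Theorem~\ref{angularthm} to get (i), whereas you run the same steps in the opposite order.
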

\begin{proof} 
We will prove the second identity first. Start by integrating the effects of interactions along the characteristics of equation (1.1):
\begin{multline*}
            \int_{{\rm I\!R}^n}\int_{{\rm I\!R}^n}  f(x+t\xi, \xi, t)|x|^2\ dxd\xi  = \int_{{\rm I\!R}^n}\int_{{\rm I\!R}^n} f(x, \xi, 0)|x|^2\ dxd\xi\\ + \int_{0}^{t} \int_{{\rm I\!R}^n}\int_{{\rm I\!R}^n} I(f, x+s\xi, \xi, s)|x|^2\ dxd\xi ds
\end{multline*}
Therefore a change of variable implies:  
\begin{multline*}
      \int_{{\rm I\!R}^n}\int_{{\rm I\!R}^n}  f(x+t\xi, \xi, t)|x|^2 \ dxd\xi =\int_{{\rm I\!R}^n}\int_{{\rm I\!R}^n} f(x, \xi, 0)|x|^2\ dxd\xi \\+ \int_{0}^{t}\int_{{\rm I\!R}^n}\int_{{\rm I\!R}^n} I(f,x, \xi, s)|x - s\xi|^2 \ dxd\xi ds 
\end{multline*}
It follows from Definition \ref{interaction} that:
\begin{align*}
    \int_{{\rm I\!R}^n} I(f,x, \xi, t)|x - t\xi|^2 \ d\xi=\int_{{\rm I\!R}^n} I(f,x, \xi, t)(|x|^2 +t^2|\xi|^2 - 2t(x \cdot \xi)) \ d\xi=0
\end{align*}
We will substitute the equation above into the previous computation and deduce the second identity: 
\begin{align*}
\int_{{\rm I\!R}^n}\int_{{\rm I\!R}^n}  f(x+t\xi, \xi, t)|x|^2\ dxd\xi=\int_{{\rm I\!R}^n}\int_{{\rm I\!R}^n} f(x, \xi, 0)|x|^2\ dxd\xi
\end{align*}
To finish the proof, we will use the identity above and Theorem \ref{angularthm}:
\begin{multline*}
    \int_{{\rm I\!R}^n}\int_{{\rm
I\!R}^n} f(x,\xi,t)|x|^2 \ dxd\xi=\int_{{\rm I\!R}^n}\int_{{\rm
I\!R}^n} f(x+t\xi,\xi,t)|x+t\xi|^2 \ dxd\xi \\ = \int_{{\rm I\!R}^n}\int_{{\rm
I\!R}^n} f(x+t\xi,\xi,t)(|x|^2+t^2|\xi|^2 + 2t(x \cdot \xi)) \ dxd\xi \\ =  \int_{{\rm I\!R}^n}\int_{{\rm
I\!R}^n} f(x,\xi,0)|x|^2 dxd\xi + t^2{\bf E} +  2t\int_{{\rm I\!R}^n}\int_{{\rm
I\!R}^n} f(x,\xi,t)(x-t\xi) \cdot \xi \ dxd\xi \\=\int_{{\rm I\!R}^n}\int_{{\rm I\!R}^n} f(x,\xi,0)|x|^2 \ dxd\xi + 2tA(0) + t^2{\bf E}
\end{multline*}

\end{proof}

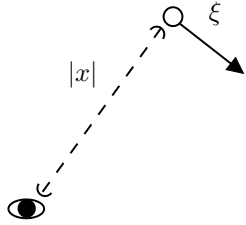
\begin{figure}[ht]
    \centering
    \label{fig:my_label}
\tikzset{every picture/.style={line width=0.75pt}} 

\begin{tikzpicture}[ x=0.75pt,y=0.75pt,yscale=-1,xscale=1]

\draw    (296.25,100.25) -- (327.14,124.4) ;
\draw [shift={(329.5,126.25)}, rotate = 218.02] [fill={rgb, 255:red, 0; green, 0; blue, 0 }  ][line width=0.08]  [draw opacity=0] (8.93,-4.29) -- (0,0) -- (8.93,4.29) -- cycle    ;
\draw  [fill={rgb, 255:red, 255; green, 255; blue, 255 }  ,fill opacity=1 ] (288.75,97.25) .. controls (288.75,94.63) and (290.88,92.5) .. (293.5,92.5) .. controls (296.12,92.5) and (298.25,94.63) .. (298.25,97.25) .. controls (298.25,99.87) and (296.12,102) .. (293.5,102) .. controls (290.88,102) and (288.75,99.87) .. (288.75,97.25) -- cycle ;
\draw   (210.5,194.25) .. controls (210.5,191.63) and (214.53,189.51) .. (219.5,189.51) .. controls (224.47,189.51) and (228.5,191.63) .. (228.5,194.25) .. controls (228.5,196.87) and (224.47,199) .. (219.5,199) .. controls (214.53,199) and (210.5,196.87) .. (210.5,194.25) -- cycle ;
\draw  [fill={rgb, 255:red, 0; green, 0; blue, 0 }  ,fill opacity=1 ] (215.59,194.07) .. controls (215.59,191.76) and (217.43,189.89) .. (219.7,189.89) .. controls (221.97,189.89) and (223.8,191.76) .. (223.8,194.07) .. controls (223.8,196.37) and (221.97,198.24) .. (219.7,198.24) .. controls (217.43,198.24) and (215.59,196.37) .. (215.59,194.07) -- cycle ;
\draw  [dash pattern={on 4.5pt off 4.5pt}]  (287.5,103) -- (226.7,187.07) ;
\draw [shift={(226.7,187.07)}, rotate = 305.88] [color={rgb, 255:red, 0; green, 0; blue, 0 }  ][line width=0.75]      (3.91,-3.91) .. controls (1.75,-3.91) and (0,-2.16) .. (0,0) .. controls (0,2.16) and (1.75,3.91) .. (3.91,3.91) ;
\draw [shift={(287.5,103)}, rotate = 125.88] [color={rgb, 255:red, 0; green, 0; blue, 0 }  ][line width=0.75]      (3.91,-3.91) .. controls (1.75,-3.91) and (0,-2.16) .. (0,0) .. controls (0,2.16) and (1.75,3.91) .. (3.91,3.91) ;

\draw (310,88) node [anchor=north west][inner sep=0.75pt]    {$\xi $};
\draw (239,118) node [anchor=north west][inner sep=0.75pt]    {$|x|$};

\end{tikzpicture}
\begin{adjustwidth}{63pt}{63pt}
\caption{\small Uncertainty associated to the measurement of momentum of a particle is proportional to its mass, velocity and distance to the observer.}
\end{adjustwidth}
\end{figure}

\begin{definition}
Let $U(t)$ be the {\it uncertainty} associated to $f$ at time $t$ relative to an idle observer at the origin:
\begin{align*}
    U(t) = \int_{{\rm I\!R}^n}\int_{{\rm I\!R}^n} f(x,\xi, t)|x||\xi| \ dxd\xi 
\end{align*}
\end{definition}

\begin{remark}It is possible to interpret the uncertainty defined above akin to its quantum mechanics counterpart, that is by highlighting a fundamental limit on the precision of the physical measurements. As in Figure 1, consider an observer located at the origin and assume the speed of light is $C$. For each particle located at $x$ there is a minimum delay of $T=C^{-1}|x|$ between the actual time of measurement and observation. The quantity $T\times f(x,\xi,t)|\xi|=C^{-1}f(x,\xi,t)|x||\xi|$ represents the uncertainty of measurement relative to an observer, due to this interval of delay for a particle at position $x$ with velocity $\xi$. After re-scaling of this quantity with $C=1$ and integrating over the space of the positions and velocities, one can get the definition above.   \\
\end{remark}

\begin{definition}
  Let $\|f\|_{G}$ be  the {\it relative angular norm} of $f$ defined in the terms of the uncertainty and angular momentum:
\begin{align*}.
    \|f\|_{G}=\sup_{t} \big(U(t) - A(t)\big)
\end{align*}
\end{definition}
\begin{theorem}\label{RANB} Under the assumptions (2.1) and (2.2), the relative angular norm of the solutions to equation (1.1) is bounded as below:
\begin{align*}
   \|f\|_G &\leq \int_{{\rm I\!R}^n}\int_{{\rm I\!R}^n} f(x, \xi, 0)(|x|^2+|\xi|^2)\ dxd\xi - \int_{{\rm I\!R}^n}\int_{{\rm I\!R}^n} f(x, \xi, 0)x\cdot\xi \ dxd\xi
\end{align*}
Furthermore, uncertainty goes to infinity with time: 
\begin{align*}
\lim_{t\rightarrow\infty} U(t) &= \infty
\end{align*}
\begin{proof}
Start by the computation below in which we use Theorem \ref{angular momentum}:
\begin{multline*}
      U(t) - A(t)=  
      \int_{{\rm I\!R}^n}\int_{{\rm I\!R}^n} f(x, \xi, t)|x||\xi| \ dxd\xi - \int_{{\rm I\!R}^n}\int_{{\rm I\!R}^n} f(x, \xi, t)x\cdot\xi \ dxd\xi \\ =\int_{{\rm I\!R}^n}\int_{{\rm I\!R}^n} f(x+t\xi, \xi, t)|x+t\xi||\xi| \ dxd\xi - A(0) - t\bf{E}
\end{multline*}
Using the triangle inequality and conservation of energy we get: 
\begin{multline*}
      U(t) - A(t) \leq \int_{{\rm I\!R}^n}\int_{{\rm I\!R}^n} f(x+t\xi, \xi, t)(|x| + t|\xi|^2) \ dxd\xi - A(0) - t\bf{E} \\ \leq \int_{{\rm I\!R}^n}\int_{{\rm I\!R}^n} f(x+t\xi, \xi, t)|x||\xi|\ dxd\xi - A(0)
\end{multline*}
We will apply Lemma \ref{consvlemma}, it follows: 
\begin{multline*}
      U(t) - A(t) \leq  \int_{{\rm I\!R}^n}\int_{{\rm I\!R}^n} f(x+t\xi, \xi, t)(|x|^2 + |\xi|^2)\ dxd\xi - A(0) \\ \leq \int_{{\rm I\!R}^n}\int_{{\rm I\!R}^n} f(x, \xi, 0)(|x|^2+|\xi|^2)\ dxd\xi - A(0)
\end{multline*}
The right hand side of the inequality above is independent of the time variable, it follows that: 
\begin{align*}
\|f\|_{G} &= \sup_{t}(U(t)-A(t)) \\ &\leq \int_{{\rm I\!R}^n}\int_{{\rm I\!R}^n} f(x, \xi, 0)(|x|^2+|\xi|^2)\ dxd\xi - \int_{{\rm I\!R}^n}\int_{{\rm I\!R}^n} f(x, \xi, 0)x\cdot\xi \ dxd\xi
\end{align*}
We just proved the desired bound for the relative angular norm. On the other hand Theorem \ref{angular momentum} implies  $\lim_{t\rightarrow\infty} A(t)=\infty$, therefore the uncertainty goes to infinity with time:  
\begin{align*}
    \lim_{t\rightarrow\infty} U(t)=\infty
\end{align*}
\end{proof}
\end{theorem}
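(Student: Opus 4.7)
The plan is to bound $U(t)-A(t)$ uniformly in $t$ by pulling the integrals back along the characteristics of equation (1.1). First I would perform the change of variable $x \mapsto x + t\xi$ inside $U(t)$ to obtain $U(t) = \int\!\!\int f(x+t\xi,\xi,t)\,|x+t\xi|\,|\xi|\, dxd\xi$, and simultaneously invoke Theorem \ref{angularthm} to replace $A(t)$ by $A(0) + t\mathbf{E}$. This recasts $U(t) - A(t)$ as a single integral against $f(x+t\xi,\xi,t)$, shifted by the two explicit constants $A(0)$ and $t\mathbf{E}$.

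Next I would apply the triangle inequality $|x+t\xi| \leq |x| + t|\xi|$. The piece weighted by $t|\xi|^2$ integrates to exactly $t\mathbf{E}$ (by conservation of energy, which is preserved under the change of variable $x \mapsto x+t\xi$), and so cancels the $-t\mathbf{E}$ produced in the first step. What survives is bounded by $\int\!\!\int f(x+t\xi,\xi,t)\,|x|\,|\xi|\, dxd\xi - A(0)$. From here the elementary bound $|x|\,|\xi| \leq |x|^2 + |\xi|^2$ splits the remaining integral into two pieces. The $|x|^2$ piece is reduced to its initial value via Lemma \ref{consvlemma}(ii), and the $|\xi|^2$ piece equals $\mathbf{E}$ by conservation of energy. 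Rewriting $A(0)$ through its definition then yields the stated bound, which is independent of $t$ and therefore also controls $\sup_t (U(t) - A(t)) = \|f\|_G$.

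For the divergence of uncertainty, the pointwise Cauchy--Schwarz inequality $x\cdot\xi \leq |x|\,|\xi|$ gives $A(t) \leq U(t)$, and then Theorem \ref{angularthm} yields $U(t) \geq A(0) + t\mathbf{E} \to \infty$ as $t \to \infty$ (in the non-degenerate regime $\mathbf{E} > 0$; the case $\mathbf{E} = 0$ forces $f$ to be supported at $\xi = 0$ and is trivial).

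No serious obstacle is expected. The argument is essentially accounting, and its only subtle point is the alignment of the two appearances of $t\mathbf{E}$ — one coming from Theorem \ref{angularthm} and the other from the $t|\xi|^2$ piece of the triangle inequality — whose cancellation is precisely what makes the bound uniform in $t$. Without it one would only be able to control $U(t) - A(t)$ up to an additive $O(t)$ error, which would be useless for the supremum definition of $\|f\|_G$.
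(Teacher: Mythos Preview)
Your proposal is correct and follows the paper's proof essentially step for step: the same change of variable $x\mapsto x+t\xi$ in $U(t)$, the same use of Theorem~\ref{angularthm} to produce $A(0)+t\mathbf{E}$, the same triangle inequality producing the $t|\xi|^2$ term that cancels $t\mathbf{E}$, the same bound $|x||\xi|\le|x|^2+|\xi|^2$, and the same appeal to Lemma~\ref{consvlemma}(ii) and conservation of energy to reduce everything to time-zero data. Your treatment of the divergence of $U(t)$ via the pointwise inequality $x\cdot\xi\le|x||\xi|$ is in fact slightly cleaner than the paper's phrasing, and your explicit mention of the degenerate case $\mathbf{E}=0$ is a nice addition the paper omits.
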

\begin{corollary} The previous theorem implies that on average the uncertainty increases linearly over time, proportional to the total amount of the energy({\bf E}), although it may not be monotonic:
\begin{equation*}
    A(0) + t{\bf E} \leq U(t) \leq \int_{{\rm I\!R}^n}\int_{{\rm I\!R}^n} f(x, \xi, 0)(|x|^2+|\xi|^2)\ dxd\xi +  t{\bf E}
\end{equation*}
\end{corollary}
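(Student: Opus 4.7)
The plan is to extract both inequalities essentially for free from what has already been proved, since the corollary does no more than rephrase Theorem \ref{RANB} once the exact formula for $A(t)$ from Theorem \ref{angularthm} is plugged in. The strategy is to sandwich $U(t)$ between $A(t)$ and $A(t)$ plus a constant, and then substitute $A(t) = A(0) + t\mathbf{E}$.

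First I would handle the lower bound. The pointwise inequality $x \cdot \xi \leq |x|\,|\xi|$ holds by Cauchy--Schwarz on $\mathbb{R}^n$, so integrating against the nonnegative weight $f(x,\xi,t)$ yields $A(t) \leq U(t)$. Invoking Theorem \ref{angularthm} converts this into $U(t) \geq A(0) + t\mathbf{E}$, which is the left half of the claim.

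For the upper bound, I would reuse the intermediate estimate established inside the proof of Theorem \ref{RANB}, namely
\begin{equation*}
U(t) - A(t) \leq \int_{{\rm I\!R}^n}\int_{{\rm I\!R}^n} f(x,\xi,0)\bigl(|x|^2 + |\xi|^2\bigr)\,dx\,d\xi - A(0).
\end{equation*}
Rearranging gives $U(t) \leq A(t) - A(0) + \int\int f_0(|x|^2+|\xi|^2)\,dx\,d\xi$, and substituting $A(t) - A(0) = t\mathbf{E}$ from Theorem \ref{angularthm} produces the right half of the claim. The $A(0)$ term cancels cleanly, which is the only algebraic point to verify.

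There is no real obstacle here; the corollary is a bookkeeping consequence of the two preceding results. The only thing worth flagging explicitly in the write-up is the interpretation: the two-sided estimate controls the linear growth rate of $U(t)$ but does not force monotonicity, since the lower and upper envelopes differ by a nonzero (initial-data dependent) constant, leaving room for $U(t)$ to oscillate within that band.
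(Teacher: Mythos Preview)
Your proposal is correct and matches the paper's intent: the corollary is stated immediately after Theorem \ref{RANB} without a separate proof, and your derivation---Cauchy--Schwarz for the lower bound $A(t)\leq U(t)$, the bound on $U(t)-A(t)$ from Theorem \ref{RANB} for the upper bound, and substitution of $A(t)=A(0)+t\mathbf{E}$ from Theorem \ref{angularthm}---is exactly how it unpacks.
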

\indent We will use this uncertainty principle and the relative angular norm to demonstrate dispersive effects and prove results on the concentration of mass. The intuition behind the next definition is coming from the boundedness of relative angular norm and its utility will be shown in the forthcoming theorems.

\begin{definition}\label{bcone} Let  $C_{x_{0}}(x,c)$ be the {\it blind cone} at $x \in {\rm I\!R}^n$ with the apex angle $0<c$ relative to the observer $x_0 \in {\rm I\!R}^n$ as defined below. Here $\theta$ represents the angle between the two given vectors.   
\begin{align*}
    C_{x_0}(x,c) = \{\xi \in {\rm I\!R}^n \big | \theta(x-x_0, \xi) \notin  [c, \pi-c]\}
\end{align*}
 
\end{definition}
\begin{theorem}\label{dispersionE}
Assume $f$ is any solution of equation (1.1) such that satisfies (2.1). The energy contained inside any bounded subset of the spatial variable like $D \subset {\rm I\!R}^n$ is integrable over time:
\begin{align*}
    \int_{0}^{\infty}\int_{{\rm I\!R}^n}\int_{D} f(x,\xi, t)|\xi|^2 \ dxd\xi dt < \infty
\end{align*}
\end{theorem}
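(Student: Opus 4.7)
The plan is a Morawetz-type argument in the spirit of dispersive estimates for Schr\"odinger equations. For each shift $y \in {\rm I\!R}^n$ I introduce the multiplier $\phi_y(x) = |x-y|$ and the functional
\[ M_y(t) = \int_{{\rm I\!R}^n}\int_{{\rm I\!R}^n} f(x,\xi,t)\, \frac{(x-y)\cdot \xi}{|x-y|}\, dx d\xi. \]
Following the same template as the proofs of Theorem \ref{angularthm} and Lemma \ref{consvlemma}, namely the change of variables $x \mapsto x+t\xi$ combined with $\frac{d}{dt} f(x+t\xi,\xi,t) = I(f,x+t\xi,\xi,t)$, I would differentiate $M_y$ in time. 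The integrand picked up against $I$ is the linear-in-$\xi$ function $\xi \cdot \nabla\phi_y(x)$, so the third identity in Definition \ref{interaction} kills its $\xi$-integral against $I$; what remains is
\[ \frac{d}{dt} M_y(t) = \int_{{\rm I\!R}^n}\int_{{\rm I\!R}^n} f(x,\xi,t)\, \xi^\top D^2\phi_y(x)\, \xi\, dx d\xi = \int_{{\rm I\!R}^n}\int_{{\rm I\!R}^n} f(x,\xi,t)\, \frac{|\xi|^2 \sin^2\theta_y}{|x-y|}\, dx d\xi, \]
where $\theta_y$ is the angle between $x-y$ and $\xi$. In particular $M_y$ is non-decreasing in $t$.

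Combining this monotonicity with the a priori bound $|M_y(t)| \leq \int_{{\rm I\!R}^n}\int_{{\rm I\!R}^n} f|\xi|\, dx d\xi \leq \sqrt{{\bf M}{\bf E}}$ (Cauchy--Schwarz plus conservation of mass and energy) and integrating on $[0,\infty)$, I obtain
\[ \int_0^\infty \int_{{\rm I\!R}^n}\int_{{\rm I\!R}^n} f(x,\xi,t)\, \frac{|\xi|^2 \sin^2\theta_y}{|x-y|}\, dx d\xi\, dt \leq 2\sqrt{{\bf M}{\bf E}}. \]
Next I average this uniform-in-$y$ bound over $y \in B_\rho$ with $\rho$ chosen large enough that $D \subset B_{\rho/2}$. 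After swapping integrals by Tonelli, the inner factor becomes $\int_{B_\rho} \frac{\sin^2\theta_y(x,\xi)}{|x-y|}\, dy$; substituting $z = x-y$ and passing to spherical coordinates, for $x \in D$ I bound this from below by a strictly positive constant $c(n,\rho)$, because the angular factor $\int_{S^{n-1}} \sin^2\theta(\sigma,\hat\xi)\, d\sigma$ is a positive rotation-invariant constant and the radial factor $\int_0^{\rho/2} r^{n-2}\, dr$ is strictly positive for $n \geq 2$. This delivers the stated integrability of energy on $D$.

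The main technical subtlety is the singularity of $\phi_y$ at $x=y$, which would obstruct a pointwise-in-$y$ Morawetz identity without extra decay of $f$. I would sidestep this by performing the $y$-averaging first: since $\int_{B_\rho} |x-y|^{-1}\, dy$ is uniformly bounded in $x$ for $n \geq 2$, the $y$-averaged version of the identity can be justified from the $L^1$ integrability of $f|\xi|^2$ alone, rather than requiring a pointwise identity for each fixed $y$. The case $n=1$ must be handled separately, since $\sin^2\theta_y$ vanishes identically; there the distributional Hessian $\phi_y''(x) = 2\delta_{x=y}$, made rigorous via the smooth approximation $\sqrt{(x-y)^2 + \varepsilon^2}$, gives $\frac{d}{dt} M_y(t) = 2\int f(y,\xi,t)\xi^2\, d\xi$, and integrating in $y \in D$ yields the same conclusion directly.
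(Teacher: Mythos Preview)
Your proposal is correct and shares the paper's starting point---the Morawetz multiplier $\frac{(x-y)\cdot\xi}{|x-y|}$, its monotonicity, and the a priori bound by $\mathbf{M}$ and $\mathbf{E}$---but diverges in how you overcome the angular degeneracy $\sin^2\theta_y=0$. The paper picks three discrete observers $O_1,O_2,O_3$ on $\partial B(0,R)$ and runs a pigeonhole argument on the ``blind cones'' $C_{O_i}(x,c)$: for $c$ small enough the three cones at any $x\in B(0,R)$ have empty intersection, so every $\xi$ lies outside at least one cone and contributes to the corresponding $J_i$; summing $J_1+J_2+J_3$ covers $B(0,R)\times{\rm I\!R}^n$. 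Your continuous average over $y\in B_\rho$ accomplishes the same thing more cleanly, replacing the combinatorial covering by the observation that $\int_{S^{n-1}}\sin^2\theta(\sigma,\hat\xi)\,d\sigma$ is a positive constant; this is the standard move in dispersive PDE and yields an explicit constant depending only on $n$ and $\rho$. Your approach also buys a clean treatment of $n=1$ via the distributional Hessian $\phi_y''=2\delta_{x=y}$, a case the paper's proof does not address (in one dimension $\sin^2\theta\equiv 0$, the blind cones are all of ${\rm I\!R}$, and the three-observer argument is vacuous as written). The paper's route, on the other hand, is more self-contained geometrically and feeds directly into the blind-cone machinery used later in Theorem~\ref{concentration}.
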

\begin{proof}
Let $\dfrac{(x- x_{0})\cdot\xi}{|x- x_0|}$ be the {\it localized angular momentum} at $x_0 \in {\rm I\!R}^n$. We will multiply both sides of equation (1.1) with this quantity: 
\begin{align*}
     \partial_t f(x,\xi,t)\frac{(x- x_{0})\cdot\xi}{|x- x_0|}+  \sum_{1}^{n}\frac{(x- x_{0})\cdot\xi}{|x-x_0|} \xi_{i}\partial_{x_{i}}f(x,\xi, t)=  I(f,x,\xi,t)\frac{(x- x_{0})\cdot\xi}{|x- x_0|}
\end{align*}
\noindent Integrate this equation over the space of positions and velocities and implement the change of variable $x \Longrightarrow x+x_{0}$, we get: 
     \begin{multline*}
     \int_{{\rm I\!R}^n}\int_{{\rm I\!R}^n} \partial_t f(x+x_0,\xi,t)\frac{(x\cdot\xi)}{|x|}\ dxd\xi\\ + \int_{{\rm I\!R}^n}\int_{{\rm I\!R}^n}  \sum_{1}^{n}\frac{(x\cdot\xi)}{|x|} \xi_{i}\partial_{x_{i}}f(x+x_0,\xi, t)\ dxd\xi \\= \int_{{\rm I\!R}^n}\int_{{\rm I\!R}^n} I(f, x+x_0,\xi,t)\frac{(x\cdot\xi)}{|x|}\ dxd\xi=0
     \end{multline*}
\noindent The last equality is true since $I$ is a mesoscopic interaction. Continue with an integration by parts with respect to $x$: 
     \begin{multline*} 
    \int_{{\rm I\!R}^n}\int_{{\rm I\!R}^n}\big( \frac{1}{|x|^3}(x\cdot\xi)^2 f(x+x_0,\xi, t)- \frac{1}{|x|}|\xi|^2 f(x+x_0,\xi, t) \big)\ dxd\xi\\+  \partial_t \int_{{\rm I\!R}^n}\int_{{\rm I\!R}^n}   f(x+x_0,\xi,t)\frac{(x\cdot\xi)}{|x|}\ dxd\xi=0
\end{multline*}
Let $\theta(x,\xi)$ be the angle between $x$ and $\xi$, we get:  
     \begin{multline*}
     \partial_t \int_{{\rm I\!R}^n}\int_{{\rm I\!R}^n} f(x+x_0,\xi,t)\frac{(x\cdot\xi)}{|x|}\ dxd\xi \\=
    \int_{{\rm I\!R}^n}\int_{{\rm I\!R}^n} \frac{1}{|x|}|\xi|^2 \sin^2(\theta(x,\xi)) f(x+x_{0},\xi, t)\ dxd\xi  
\end{multline*}
\noindent The positive derivative appearing in the equation above is the time derivative of a bounded quantity: 
\begin{multline*}
    \big|\int_{{\rm I\!R}^n}\int_{{\rm I\!R}^n}   f(x+x_0,\xi,t)\frac{(x\cdot\xi)}{|x|}\ dxd\xi\big| \leq  \int_{{\rm I\!R}^n}\int_{{\rm I\!R}^n}   f(x+x_0,\xi,t)|\xi|\ dxd\xi \\ \leq \int_{{\rm I\!R}^n}\int_{{\rm I\!R}^n}   f(x,\xi,t)(1+|\xi|^2)\ dxd\xi\leq  {\bf M}+{\bf E} <\infty\\ 
\end{multline*}
Positivity of this derivative implies that for an arbitrary $x_{0} \in {\rm I\!R}^n$ the following limit exists:
\begin{align*}
\lim_{t\rightarrow \infty} \int_{{\rm I\!R}^n}\int_{{\rm I\!R}^n} f(x+x_0,\xi,t)\frac{(x\cdot\xi)}{|x|}\ dxd\xi < \infty
\end{align*}
Furthermore the previous argument indicates that:
\begin{multline}
    \int_{0}^{\infty}\int_{{\rm I\!R}^n}\int_{{\rm I\!R}^n} \frac{1}{|x|}|\xi|^2  \sin^2(\theta(x,\xi)) f(x+x_{0},\xi, t)\ dxd\xi dt \\ = \int_{0}^{\infty}\int_{{\rm I\!R}^n}\int_{{\rm I\!R}^n} \frac{1}{|x-x_0|}|\xi|^2 \sin^2(\theta(x-x_0,\xi)) f(x,\xi, t)\ dxd\xi dt < \infty
\end{multline}
\noindent  Recall Definition \ref{bcone} of a blind cone $C_{x_0}(x,c)$ and let $B(x_{0}, R) \subset {\rm I\!R}^n$ be the ball of radius $R$ centered at $x_{0}$. We will integrate the energy over the spatial variable inside the ball and over the space of velocities outside of the blind cones. It follows from (2.3) that: 
\begin{multline*}
    \sin^2(c)\frac{1}{R}\int_{0}^{\infty}\int_{{\rm I\!R}^n-C_{x_{0}}(x,c)}\int_{B(x_0, R)} f(x,\xi, t) |\xi|^2 \ dxd\xi dt \\ <  \int_{0}^{\infty}\int_{{\rm I\!R}^n}\int_{{\rm I\!R}^n} \frac{1}{|x-x_0|}|\xi|^2 \sin^2(\theta(x-x_0,\xi)) f(x,\xi, t) \ dxd\xi dt< \infty
\end{multline*}
The previous computation implies that for any $x_{0} \in {\rm I\!R}^n$ and $0<c$, the total energy contained in $B(x_0, R) \times {\rm I\!R}^n$ is integrable over time: 
\begin{align}
    \int_{0}^{\infty}\int_{{\rm I\!R}^n-C_{x_{0}}(x,c)}\int_{B(x_0, R)} |\xi|^2  f(x,\xi, t) \ dxd\xi dt < \infty
\end{align} \\ 
\begin{figure}[t]
    \centering
   
    \label{fig:my_label}
\tikzset{every picture/.style={line width=0.75pt}} 

\begin{tikzpicture}[x=0.75pt,y=0.75pt,yscale=-1,xscale=1]

\draw    (264,56) -- (482,184) ;
\draw   (196,177.75) .. controls (196,98.91) and (259.91,35) .. (338.75,35) .. controls (417.59,35) and (481.5,98.91) .. (481.5,177.75) .. controls (481.5,256.59) and (417.59,320.5) .. (338.75,320.5) .. controls (259.91,320.5) and (196,256.59) .. (196,177.75) -- cycle ;
\draw    (422,62) -- (298,314) ;
\draw  [dash pattern={on 4.5pt off 4.5pt}]  (390,132.27) -- (435,181) ;
\draw  [dash pattern={on 4.5pt off 4.5pt}]  (390,132.27) -- (390,201) ;
\draw  [dash pattern={on 4.5pt off 4.5pt}]  (387.85,125.73) -- (342,72) ;
\draw  [dash pattern={on 4.5pt off 4.5pt}]  (387.85,125.73) -- (390,60) ;
\draw    (348.15,225.28) .. controls (398.17,229.29) and (444.59,215.14) .. (468.23,187.18) ;
\draw [shift={(470,185)}, rotate = 488.07] [fill={rgb, 255:red, 0; green, 0; blue, 0 }  ][line width=0.08]  [draw opacity=0] (5.36,-2.57) -- (0,0) -- (5.36,2.57) -- cycle    ;
\draw [shift={(345,225)}, rotate = 5.44] [fill={rgb, 255:red, 0; green, 0; blue, 0 }  ][line width=0.08]  [draw opacity=0] (5.36,-2.57) -- (0,0) -- (5.36,2.57) -- cycle    ;
\draw  [dash pattern={on 0.84pt off 2.51pt}]  (354,36) -- (387.85,125.73) ;
\draw    (397.08,180.34) .. controls (404.82,178.5) and (411.13,175.58) .. (416.47,172.51) ;
\draw [shift={(419,171)}, rotate = 508.39] [fill={rgb, 255:red, 0; green, 0; blue, 0 }  ][line width=0.08]  [draw opacity=0] (3.57,-1.72) -- (0,0) -- (3.57,1.72) -- cycle    ;
\draw [shift={(394,181)}, rotate = 349.22] [fill={rgb, 255:red, 0; green, 0; blue, 0 }  ][line width=0.08]  [draw opacity=0] (3.57,-1.72) -- (0,0) -- (3.57,1.72) -- cycle    ;
\draw  [dash pattern={on 4.5pt off 4.5pt}]  (385.4,131.56) -- (338.6,158.59) ;
\draw  [dash pattern={on 4.5pt off 4.5pt}]  (384.4,128.56) -- (329.65,118.35) ;
\draw  [dash pattern={on 4.5pt off 4.5pt}]  (392.2,127.49) -- (441.4,99.41) ;
\draw  [dash pattern={on 4.5pt off 4.5pt}]  (392.2,129.49) -- (449.35,138.65) ;
\draw  [dash pattern={on 0.84pt off 2.51pt}]  (197,161) -- (382,131) ;
\draw  [dash pattern={on 0.84pt off 2.51pt}]  (336,182) -- (322,316) ;
\draw [shift={(322,316)}, rotate = 275.96] [color={rgb, 255:red, 0; green, 0; blue, 0 }  ][line width=0.75]      (5.59,-5.59) .. controls (2.5,-5.59) and (0,-3.09) .. (0,0) .. controls (0,3.09) and (2.5,5.59) .. (5.59,5.59) ;
\draw [shift={(336,182)}, rotate = 95.96] [color={rgb, 255:red, 0; green, 0; blue, 0 }  ][line width=0.75]      (5.59,-5.59) .. controls (2.5,-5.59) and (0,-3.09) .. (0,0) .. controls (0,3.09) and (2.5,5.59) .. (5.59,5.59) ;
\draw  [draw opacity=0][dash pattern={on 4.5pt off 4.5pt}] (559.05,232.58) .. controls (530.84,261.59) and (496.6,284.8) .. (458.31,300.23) -- (348,31.25) -- cycle ; \draw  [dash pattern={on 4.5pt off 4.5pt}] (559.05,232.58) .. controls (530.84,261.59) and (496.6,284.8) .. (458.31,300.23) ;
\draw  [draw opacity=0] (451.88,302.73) .. controls (419.6,314.86) and (384.58,321.5) .. (348,321.5) .. controls (300.19,321.5) and (255.05,310.15) .. (215.18,290.03) -- (348,31.25) -- cycle ; \draw   (451.88,302.73) .. controls (419.6,314.86) and (384.58,321.5) .. (348,321.5) .. controls (300.19,321.5) and (255.05,310.15) .. (215.18,290.03) ;
\draw  [draw opacity=0][dash pattern={on 4.5pt off 4.5pt}] (211.92,288.36) .. controls (177.71,270.56) and (147.49,246.26) .. (122.94,217.11) -- (348,31.25) -- cycle ; \draw  [dash pattern={on 4.5pt off 4.5pt}] (211.92,288.36) .. controls (177.71,270.56) and (147.49,246.26) .. (122.94,217.11) ;

\draw  [fill={rgb, 255:red, 255; green, 255; blue, 255 }  ,fill opacity=1 ] (319,320) .. controls (319,318.34) and (320.34,317) .. (322,317) .. controls (323.66,317) and (325,318.34) .. (325,320) .. controls (325,321.66) and (323.66,323) .. (322,323) .. controls (320.34,323) and (319,321.66) .. (319,320) -- cycle ;
\draw  [fill={rgb, 255:red, 255; green, 255; blue, 255 }  ,fill opacity=1 ] (333.75,177.75) .. controls (333.75,176.09) and (335.09,174.75) .. (336.75,174.75) .. controls (338.41,174.75) and (339.75,176.09) .. (339.75,177.75) .. controls (339.75,179.41) and (338.41,180.75) .. (336.75,180.75) .. controls (335.09,180.75) and (333.75,179.41) .. (333.75,177.75) -- cycle ;
\draw  [fill={rgb, 255:red, 255; green, 255; blue, 255 }  ,fill opacity=1 ] (194,161) .. controls (194,159.34) and (195.34,158) .. (197,158) .. controls (198.66,158) and (200,159.34) .. (200,161) .. controls (200,162.66) and (198.66,164) .. (197,164) .. controls (195.34,164) and (194,162.66) .. (194,161) -- cycle ;
\draw  [fill={rgb, 255:red, 255; green, 255; blue, 255 }  ,fill opacity=1 ] (351,36) .. controls (351,34.34) and (352.34,33) .. (354,33) .. controls (355.66,33) and (357,34.34) .. (357,36) .. controls (357,37.66) and (355.66,39) .. (354,39) .. controls (352.34,39) and (351,37.66) .. (351,36) -- cycle ;
\draw  [fill={rgb, 255:red, 255; green, 255; blue, 255 }  ,fill opacity=1 ] (474,215) .. controls (474,213.34) and (475.34,212) .. (477,212) .. controls (478.66,212) and (480,213.34) .. (480,215) .. controls (480,216.66) and (478.66,218) .. (477,218) .. controls (475.34,218) and (474,216.66) .. (474,215) -- cycle ;

\draw (339,16) node [anchor=north west][inner sep=0.75pt]    {$O_{1}$};
\draw (173,158) node [anchor=north west][inner sep=0.75pt]    {$O_{3}$};
\draw (476,220) node [anchor=north west][inner sep=0.75pt]    {$O_{2}$};
\draw (405,184) node [anchor=north west][inner sep=0.75pt]    {$c$};
\draw (410,233) node [anchor=north west][inner sep=0.75pt]    {$2c$};
\draw (311,222) node [anchor=north west][inner sep=0.75pt]    {$R$};

\end{tikzpicture}
\begin{adjustwidth}{65pt}{65pt}
\caption{\small A possible configuration of the three observers. This drawing includes blind cones $C_{O_1}(x,c)$,$C_{O_1}(x,2c)$, $C_{O_3}(x,c)$ and ball $B(0,R)$ and a fragment of $B(O_1,2R$). Here $O_{3} \notin P.$}  
\end{adjustwidth}
\end{figure}
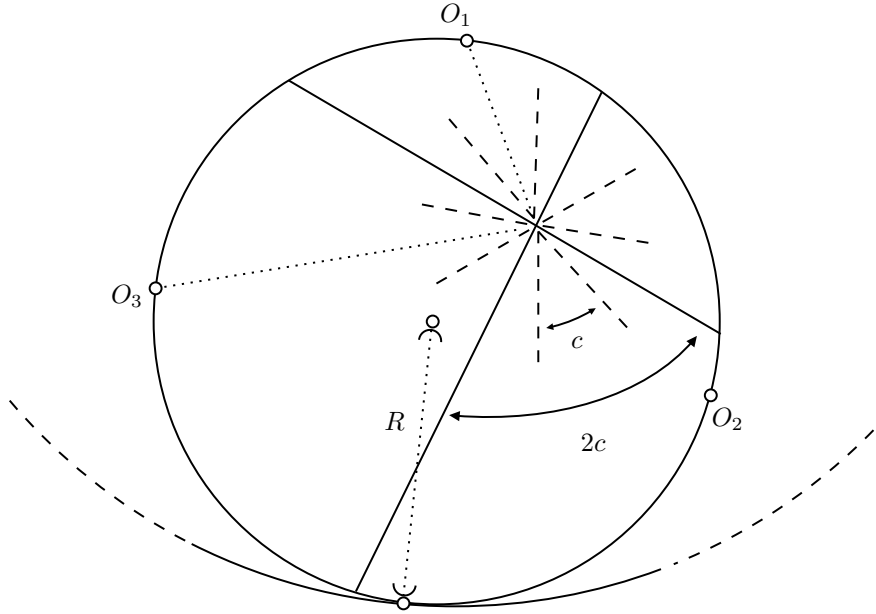
\noindent We will use the estimate above in the context of a geometric argument that is independent of the dimension and complete the proof. Choose any three distinct {\it observers}: $O_{1}, O_{2}, O_{3} \in \partial B(0, R)$ and for an arbitrary $x\in B(0, R)$ define $P=\partial B(0, R)\cap C_{O_1}(x, 2c)$. If there exists an $O_i$ such that $O_{i} \notin P$ then the blind cones $C_{O_i}(x, c)$ and $C_{O_1}(x, c)$ have an empty intersection. Set $P$ belongs to $\partial B(0, R)$ and is made of two path connected components, consider the longest short path on each component and set $K$ to be the maximum length of the two. For any fixed $R$ it is possible to choose $c$ small enough such that $K$ becomes as small as desired. Now set $c$ small enough such that $K$ becomes smaller than the shortest path on the sphere between any two of the there observers. The pigeon hole principle implies that, since each path connected component of $P$ can only contain maximum one of the observers, there exists an $O_{i}$ such that $O_{i} \notin P$. The previous argument implies that for any $x \in B(0, R)$ the blind cones with respect to the three observers have an empty intersection:   
\begin{align}
C_{O_1}(x,c) \cap C_{O_1}(x,c) \cap C_{O_1}(x,c)=\emptyset \end{align}
\noindent Continue with the following definition for $J_1, J_2$ and $J_3$. These integrals are convergent as a  consequence of (2.4):
\begin{align*}
    J_1=\int_{0}^{\infty}\int_{{\rm I\!R}^n-C_{O_{1}}(x, c)}\int_{B(O_1, 2R)} |\xi|^2  f(x,\xi, t) \ dxd\xi dt < \infty \\  J_2=\int_{0}^{\infty}\int_{{\rm I\!R}^n-C_{O_{2}(x, c)}}\int_{B(O_2, 2R)} |\xi|^2  f(x,\xi, t) \ dxd\xi dt < \infty \\  J_3=\int_{0}^{\infty}\int_{{\rm I\!R}^n-C_{O_{3}(x, c)}}\int_{B(O_3, 2R)} |\xi|^2  f(x,\xi, t) \ dxd\xi dt < \infty 
\end{align*}

\noindent Note that $B(0,R) \subset B(O_{1}, 2R) \cap B(O_{2}, 2R) \cap B(O_{3}, 2R)$. From (2.5) we know that for any $x \in B(0, R)$ the blind cones at $x$ with respect to the three observers intersect trivially. This shows that any subset of $B(0, R) \times {\rm I\!R}^n$ is covered at least  once in the domains of integration for $J_1, J_2$ and $J_3$. Therefore the positivity of integrands implies:  
\begin{align*}
    \int_{0}^{\infty}\int_{{\rm I\!R}^n}\int_{B(0, R)} f(x, \xi, t)|\xi|^2 \ dxd\xi dt < J_1+J_2+J_3 < \infty
\end{align*}
\noindent Finally, this argument can be replicated for a ball centered at an arbitrary point. Accordingly, because any bounded set is contained within a ball of finite radius the proof is complete. 

\end{proof}
\begin{corollary}\label{dispersionM} As a result of the previous theorem we have that the total mass of particles with magnitude of velocity greater or equal to any fixed positive number like $v$ is integrable within any bounded subsets of the spatial variable like $D \subset {{\rm I\!R}^n}$. Equivalently we have: 
\begin{align*}
    \int_{0}^{\infty}\int_{{\rm I\!R}^n - B(0,v)}\int_{D} f(x,\xi, t) \ dxd\xi dt < \infty
\end{align*}
\begin{proof}
    We will conclude this corollary from Theorem \ref{dispersionE} by imposing an arbitrary non zero lower bound like $v$ for $|\xi|$. Since $v$ is positive and arbitrarily small, removing $B(0, v)$ can be interpreted as removing the idle particles. Any non idle particle will eventually leave $D$ and therefore their total mass is integrable over time within the bounded set. 
\end{proof}
\end{corollary}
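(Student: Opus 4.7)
The plan is to reduce this corollary to Theorem \ref{dispersionE} by a straightforward Chebyshev-type argument on the velocity variable. The estimate from Theorem \ref{dispersionE} controls the time-integrated energy over any bounded spatial set $D$, and we want instead to control the time-integrated mass over $D$ restricted to non-idle velocities (i.e., $|\xi| \geq v > 0$). Since the only issue in passing from energy to mass is the behavior of $f$ for small $|\xi|$, removing a ball $B(0,v)$ in velocity eliminates precisely that obstruction.

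Concretely, I would first observe that on the integration region ${\rm I\!R}^n - B(0,v)$ we have $|\xi| \geq v > 0$, hence the pointwise inequality $1 \leq v^{-2}|\xi|^2$. Multiplying $f$ by this bound and integrating gives
\begin{align*}
\int_{0}^{\infty}\!\int_{{\rm I\!R}^n - B(0,v)}\!\int_{D} f(x,\xi, t) \ dxd\xi dt
&\leq \frac{1}{v^2}\int_{0}^{\infty}\!\int_{{\rm I\!R}^n - B(0,v)}\!\int_{D} f(x,\xi, t)|\xi|^2 \ dxd\xi dt.
\end{align*}
By positivity of the integrand, we may enlarge the velocity domain back to all of ${\rm I\!R}^n$, and then Theorem \ref{dispersionE} applies directly, giving finiteness because $D$ is bounded.

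There is no real obstacle in this argument, only one small caveat to flag: the constant $v^{-2}$ blows up as $v \to 0$, which is consistent with the physical intuition stated in the corollary, that arbitrarily slow (idle) particles need not leave $D$ in any fixed time and so cannot be controlled by this estimate. For any fixed $v > 0$ this causes no issue. The writeup should therefore just consist of stating the pointwise bound $\mathbf{1}_{\{|\xi| \geq v\}} \leq v^{-2}|\xi|^2$, applying it under the integral, and invoking Theorem \ref{dispersionE} on the bounded set $D$ to conclude.
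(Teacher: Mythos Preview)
Your proposal is correct and is essentially the same approach as the paper's: both deduce the corollary directly from Theorem \ref{dispersionE} by using the lower bound $|\xi|\geq v$ on the restricted velocity region. Your version simply makes the implicit Chebyshev-type step $\mathbf{1}_{\{|\xi|\geq v\}}\leq v^{-2}|\xi|^2$ explicit, which is exactly what the paper's phrase ``imposing an arbitrary non zero lower bound like $v$ for $|\xi|$'' is gesturing at.
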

\begin{remark} The quantity below represents localization of mass for solutions of equations (1.1) at time $t$:
\begin{align*}
\int_{{\rm I\!R}^n}\int_{{\rm I\!R}^n} f(x,\xi,t)|x|^2 \ dxd\xi
\end{align*}
It is possible to show this quantity is convex over time using a similar integration by parts as proof of Theorem \ref{dispersionE},  simultaneously one could use Lemma \ref{consvlemma} to obtain an identical result. It follows that: 
\begin{align*}
    \partial^{2}_{t} \int_{{\rm I\!R}^n}\int_{{\rm I\!R}^n} f(x,\xi,t)|x|^2\ dxd\xi = 2 {\bf E}
\end{align*}
Although it is not possible to use this convex quantity to get similar outcome as the previous theorem, for the most part because this quantity is not priori bounded. The monotonicity of the angular momentum have the similar shortcoming. However the re-scaling which appears in the beginning of the proof of Theorem \ref{dispersionE} as localized angular momentum, will preserves the monotonicity while providing boundedness. In addition, any two distinct observers will have overlapping blind cones at points which are distant enough from both of them. Therefore, it is not possible to cover an unbounded set with finitely many such observers as before. This implies that the argument of the previous theorem is not useful in the case of an unbounded set.
\end{remark}

The previous theorem and corollary are proved independent of the specific structure of the interactions. Including more priori assumptions about the solutions of equation (1.1), paired with Theorem \ref{dispersionE} or Corollary \ref{dispersionM} can illuminate the use of the term dispersion. For example, assume that $f$ belongs to a class of solutions which are bounded and have bounded derivatives with respect to time. Let $D \subset R^{n}$ be an arbitrary bounded set and $v$ any positive number. It follows from Corollary \ref{dispersionM} that $f(x, \xi, t)$ converges to zero as time goes to infinity for almost every $(x,\xi) \in D \times ({\rm I\!R}^n - B(0, v))$:
\begin{align*}
    \lim_{t\rightarrow \infty}f(x,\xi, t)=0
\end{align*}
\indent The previous argument indicates that inside any bounded set of the spatial variable like $D$, as time goes to infinity, almost every particle with a magnitude of velocity greater than $v$ will inevitably leave the bounded set. The previous assertion is equivalent to the statement that only the particles with an arbitrarily small velocity can remain inside $D$. Thus, as time goes to infinity, an idle observer will almost only identify particles with velocity $\xi = 0$ inside any bounded set. One can continue using the Galilean invariance of the setting. Assume that the same idle observer starts to move with a  non zero arbitrary constant velocity like $\xi$. As a consequence, when time goes to infinity, inside any bounded moving region with the same constant velocity, like $D + t\xi \subset {\rm I\!R}^n$, the particles with velocity $\xi$ are being observed as motionless with respect to the moving observer, while almost every particle with a different velocity will vanish from the moving region. However, this decay may or may not be uniform. This shows that, in a way, the different velocities or frequencies separate from each other along the evolution of the solutions to equation (1.1). Each particle has a tendency to travel with other particles whose velocities are indistinguishable from each other and as a result avoid interaction with particles that move at different velocities. 
\begin{corollary}
As a consequence of the Galilean invariance for the solutions of the equation (1.1), Theorem \ref{dispersionE} and Corollary \ref{dispersionM} can be replicated for a bounded region that moves with constant velocity $\xi$.
\end{corollary}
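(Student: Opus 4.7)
The plan is to reduce the moving-frame case to the stationary case already handled by Theorem \ref{dispersionE} and Corollary \ref{dispersionM} via a Galilean change of variables. Given a solution $f$ of (1.1) satisfying (2.1), (2.2) and a fixed velocity $\xi_0 \in {\rm I\!R}^n$, I would introduce the shifted profile
\begin{equation*}
    g(x,\eta,t) \;=\; f(x + t\xi_0,\,\eta + \xi_0,\,t),
\end{equation*}
and verify directly that $g$ solves a transport equation of the same form
\begin{equation*}
    \partial_t g(x,\eta,t) + \eta\cdot\nabla_x g(x,\eta,t) \;=\; \tilde I(g,x,\eta,t),
\end{equation*}
where $\tilde I(g,x,\eta,t) := I(f, x+t\xi_0,\eta+\xi_0,t)$. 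The chain rule gives $\partial_t g + \eta\cdot\nabla_x g = (\partial_t f)(x+t\xi_0,\eta+\xi_0,t) + (\eta+\xi_0)\cdot(\nabla_x f)(x+t\xi_0,\eta+\xi_0,t) = \tilde I(g,x,\eta,t)$ after using (1.1) for $f$.

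Next I would check that $\tilde I$ is a mesoscopic interaction in the sense of Definition \ref{interaction}. Changing variables $\eta\mapsto\eta-\xi_0$ in each of the three moment integrals against $1$, $\eta_i$, and $|\eta|^2$, and expanding $|\eta-\xi_0|^2 = |\eta|^2 - 2\xi_0\cdot\eta + |\xi_0|^2$, the three vanishing conditions for $\tilde I$ reduce to linear combinations of the corresponding vanishing moments of $I$, and are therefore satisfied. This is the step that might superficially look like the main obstacle, but it is truly just a routine linearity check, which is exactly why the statement should be thought of as a structural consequence of Galilean invariance built into Definition \ref{interaction}.

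I would then verify the priori hypotheses (2.1) and (2.2) transfer to $g$: the total mass is unchanged, while the new energy expands as $\int g(x,\eta,0)|\eta|^2 dxd\eta = \mathbf{E} - 2\xi_0\cdot\mathbf{V} + |\xi_0|^2\mathbf{M}$, which is finite by (2.1), and similarly for momentum and the localization integral (which is invariant under the shift at $t=0$). With these in hand, Theorem \ref{dispersionE} applied to $g$ yields, for any bounded $D\subset{\rm I\!R}^n$,
\begin{equation*}
    \int_0^\infty\!\!\int_{{\rm I\!R}^n}\!\!\int_D g(x,\eta,t)|\eta|^2\,dxd\eta\,dt \;<\;\infty.
\end{equation*}
Unwinding the definition of $g$ and performing the change of variables $x'=x+t\xi_0$, $\eta'=\eta+\xi_0$ at each fixed $t$ (which has unit Jacobian and sends $D$ to the moving region $D+t\xi_0$) produces
\begin{equation*}
    \int_0^\infty\!\!\int_{{\rm I\!R}^n}\!\!\int_{D+t\xi_0} f(x',\eta',t)|\eta'-\xi_0|^2\,dx'd\eta'\,dt\;<\;\infty,
\end{equation*}
that is, the energy measured in the frame co-moving with velocity $\xi_0$ is time-integrable over the moving bounded region. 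Corollary \ref{dispersionM} transfers in the same way: applying it to $g$ on $D$ with cutoff $|\eta|\geq v$ and translating back gives time-integrability over $D+t\xi_0$ of the mass of particles whose velocity relative to $\xi_0$ has magnitude at least $v$, i.e. with $\xi\notin B(\xi_0,v)$. The only care needed is to record that the transferred bounds control relative energy and relative velocity, not the absolute ones, which is the genuinely informative content of the corollary.
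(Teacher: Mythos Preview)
Your proposal is correct and is precisely the explicit form of the argument the paper invokes: the paper states this corollary without a formal proof, relying on the preceding discussion of Galilean invariance and moving observers, and your change of variables $g(x,\eta,t)=f(x+t\xi_0,\eta+\xi_0,t)$ is exactly that invariance made rigorous. Your verification that $\tilde I$ remains mesoscopic and that the a priori bounds (2.1) transfer is the only substantive content, and you have carried it out correctly; your closing remark that the resulting integrability is for the \emph{relative} energy $|\xi-\xi_0|^2$ and the \emph{relative} velocity cutoff $\xi\notin B(\xi_0,v)$ is the right reading of what the corollary actually asserts.
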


The previous discussion is only one route to demonstrate the universality of dispersion along the evolution of particles. One can support the idea of dispersion, even without assumptions on the boundedness of solutions or their derivatives. The rest of this section is faithful to this idea. Recall definition of a blind cone $C_{x_0}(x,c)$ at $x$ with respect to observer $x_0$. We will modify this definition by allowing velocities that their magnitude is smaller than an arbitrary fixed  positive number. 
 
\begin{definition}\label{pbcone} Let for $K_{x_0}(x, c, v)$ be the {\it punctured blind cone} at $x \in {\rm I\!R}^n$ with respect to the observer $x_0 \in {\rm I\!R}^n$ for any positive $c$ and $v$ by modifying definition of a blind cone (Definition \ref{bcone}): 
\begin{align*}
    K_{x_{0}}(x, c, v) = C_{x_{0}}(x,c) \cup B(0, v)    
\end{align*}
\end{definition}

\begin{definition}\label{conecollection}
Let $\Gamma_{x_{0}}(c, v)$ be the {\it collection of punctured blind cones} over the spatial variable, relative to $x_0 \in {\rm I\!R}^n$ for $0< c, v$: 
\begin{align*}
    \Gamma_{x_{0}}(c,v)=\{(x,\xi) \in {\rm I\!R}^n \times {\rm I\!R}^n | \ \xi \in K_{x_{0}}(x, c, v)\} 
\end{align*}

\end{definition}
\begin{theorem}\label{concentration} For any solution of equation (1.1) satisfying (2.1) and (2.2), the total amount of mass(${\bf M}$) concentrates over time within the collection of punctured blind cones in the sense defined below. For any positive $c$ and $v$ and an arbitrary observer $x_{0} \in{\rm I\!R}^n$, we have: \begin{align*}
 \lim_{T\rightarrow \infty} \frac{1}{T}\int_{0}^{T }\iint\limits_{\Gamma_{x_{0}}(c,v)} f(x, \xi, t) \ dxd\xi dt = {\bf M}
\end{align*}
\end{theorem}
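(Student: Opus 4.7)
By conservation of mass, $\iint f(x,\xi,t)\,dxd\xi \equiv {\bf M}$, so it is equivalent to show that the Cesaro mean of the mass carried by the complementary set
\begin{equation*}
\Omega := \bigl\{(x,\xi)\in{\rm I\!R}^n\times{\rm I\!R}^n\ :\ \theta(x-x_0,\xi)\in[c,\pi-c]\ \text{and}\ |\xi|\geq v\bigr\}
\end{equation*}
tends to zero as $T\to\infty$. I would split the spatial integration at a radius $R>|x_0|/\sin c$, to be sent to infinity later. On $\Omega\cap\{|x-x_0|\leq R\}$ the restriction $|\xi|\geq v$ combined with inequality (2.4) from the proof of Theorem \ref{dispersionE} immediately yields a \emph{time-integrable} bound
\begin{equation*}
\int_0^\infty \iint_{\Omega,\ |x-x_0|\leq R} f\,dxd\xi\,dt \leq \frac{1}{v^2}\int_0^\infty \iint_{{\rm I\!R}^n-C_{x_0}(x,c),\ x\in B(x_0,R)} |\xi|^2 f\,dxd\xi\,dt =: C_R <\infty,
\end{equation*}
so this inner piece contributes at most $C_R/T$ to the Cesaro mean and vanishes as $T\to\infty$ for any fixed $R$.

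The heart of the argument is the outer piece, where I would exploit Lemma \ref{consvlemma}(ii) through a geometric identity. Writing $x-x_0 = (x-t\xi)+(t\xi-x_0)$ and projecting onto the hyperplane $\xi^{\perp}$ (which annihilates $t\xi$) gives
\begin{equation*}
|x-x_0|\sin\theta(\xi,x-x_0) \leq |x-t\xi| + |x_0|.
\end{equation*}
Restricted to $\Omega$ one has $|\xi|\geq v>0$ and $\sin\theta(\xi,x-x_0)\geq \sin c$, hence $|x-t\xi|\geq |x-x_0|\sin c - |x_0|$. Setting $L:=R\sin c - |x_0|>0$, Chebyshev's inequality combined with the identity $\iint f(x,\xi,t)|x-t\xi|^2\,dxd\xi = \iint f(x,\xi,0)|x|^2\,dxd\xi$ of Lemma \ref{consvlemma}(ii) produces the uniform-in-$t$ estimate
\begin{equation*}
\iint_{\Omega,\ |x-x_0|>R} f\,dxd\xi \leq \iint_{|x-t\xi|>L} f\,dxd\xi \leq \frac{1}{L^2}\iint f(x,\xi,0)|x|^2\,dxd\xi.
\end{equation*}

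To conclude, given $\epsilon>0$ first choose $R$ large enough that the right-hand side above is below $\epsilon$; for this fixed $R$ the inner contribution $C_R/T$ tends to zero. Thus $\limsup_{T\to\infty}\tfrac{1}{T}\int_0^T\iint_\Omega f\,dt\leq\epsilon$, and since $\epsilon$ is arbitrary the limit is zero. The main obstacle is precisely this outer estimate: the na\"ive localization $\iint f|x-x_0|^2 = O(t^2)$ coming from Lemma \ref{consvlemma}(i) is too weak, since no single scaling $R=R(T)$ can simultaneously kill the inner and outer contributions. It is the \emph{exact} conservation of $\iint f|x-t\xi|^2$ along characteristics that supplies the needed uniform-in-$t$ bound, once the angular condition defining $\Omega$ is converted into distance from the characteristic line $x=t\xi$.
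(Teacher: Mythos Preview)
Your proof is correct, and the treatment of the inner region $\{|x-x_0|\leq R\}$ matches the paper's use of Corollary~\ref{dispersionM} exactly. The outer region, however, is handled by a genuinely different mechanism. The paper controls the far piece via the \emph{relative angular norm}: from Theorem~\ref{RANB} one has $\iint f\,(|x||\xi|-x\cdot\xi)\,dxd\xi\leq\|f\|_G<\infty$ uniformly in $t$, and on the far complement the factor $|x||\xi|(1-\cos\theta)$ is bounded below by $Rv(1-\cos c)$, yielding a mass bound of order $\|f\|_G/(Rv)$. You instead invoke Lemma~\ref{consvlemma}(ii), observing that the angular restriction $\theta\in[c,\pi-c]$ forces the perpendicular component of $x-x_0$ relative to $\xi$ to be large, and since $t\xi$ has no perpendicular component this translates into $|x-t\xi|\gtrsim R\sin c$; Chebyshev against the conserved quantity $\iint f|x-t\xi|^2$ then gives a bound of order $1/R^2$.

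Your route is arguably more elementary: it bypasses the uncertainty--angular-momentum machinery entirely, gives faster decay in $R$, and does not use the velocity cutoff $v$ in the outer estimate. The paper's route, on the other hand, showcases the relative angular norm as a working tool and ties the concentration theorem back to the uncertainty principle that is the organizing theme of Section~2.
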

\begin{proof}
Define the following two sets for any $0<R$ and $x_0 \in {\rm I\!R}^n$: 
\begin{align*}
    N_R(x_0) =\{ (x,\xi)\in {\rm I\!R}^n \times ({\rm I\!R}^n - K_{x_0}(x,c,v)) \ |\ \  |x|>R\} \\ 
  M_R(x_0) =\{ (x,\xi)\in {\rm I\!R}^n \times ({\rm I\!R}^n - K_{x_0}(x,c,v)) \ |\ \  |x|\leq R\}
\end{align*}

\tikzset{every picture/.style={line width=0.75pt}} 
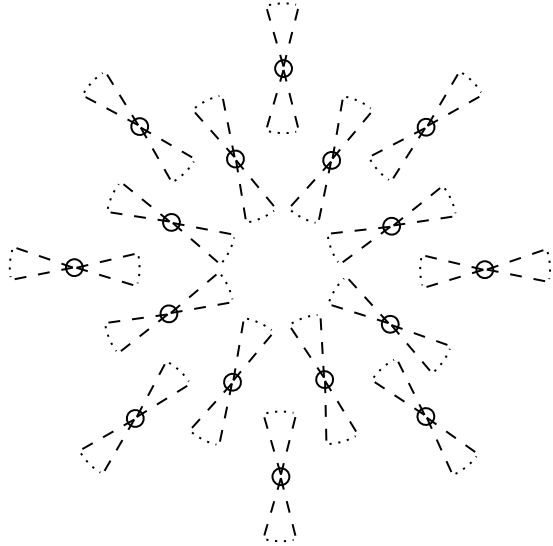
\begin{figure}[t]
    \centering
\begin{tikzpicture}[x=0.75pt,y=0.75pt,yscale=-1,xscale=1]

\draw [color={rgb, 255:red, 0; green, 0; blue, 0 }  ,draw opacity=1 ] [dash pattern={on 4.5pt off 4.5pt}]  (399.15,146.35) -- (394.08,176.09) ;
\draw [color={rgb, 255:red, 0; green, 0; blue, 0 }  ,draw opacity=1 ] [dash pattern={on 4.5pt off 4.5pt}]  (394.08,176.09) -- (414.14,152.12) ;
\draw [color={rgb, 255:red, 0; green, 0; blue, 0 }  ,draw opacity=1 ] [dash pattern={on 4.5pt off 4.5pt}]  (373.18,200.04) -- (393.34,177.61) ;
\draw [color={rgb, 255:red, 0; green, 0; blue, 0 }  ,draw opacity=1 ] [dash pattern={on 4.5pt off 4.5pt}]  (393.34,177.61) -- (387,208.21) ;
\draw  [color={rgb, 255:red, 0; green, 0; blue, 0 }  ,draw opacity=1 ] (389.88,175.01) .. controls (390.97,172.93) and (393.54,172.13) .. (395.62,173.23) .. controls (397.7,174.32) and (398.5,176.89) .. (397.41,178.96) .. controls (396.32,181.04) and (393.75,181.84) .. (391.67,180.75) .. controls (389.59,179.66) and (388.79,177.09) .. (389.88,175.01) -- cycle ;

\draw  [draw opacity=0][dash pattern={on 0.84pt off 2.51pt}] (373.19,202.32) .. controls (375.91,204.57) and (379.07,206.38) .. (382.59,207.62) .. controls (383.56,207.96) and (384.52,208.24) .. (385.49,208.48) -- (393.34,177.61) -- cycle ; \draw  [color={rgb, 255:red, 0; green, 0; blue, 0 }  ,draw opacity=1 ][dash pattern={on 0.84pt off 2.51pt}] (373.19,202.32) .. controls (375.91,204.57) and (379.07,206.38) .. (382.59,207.62) .. controls (383.56,207.96) and (384.52,208.24) .. (385.49,208.48) ;

\draw  [draw opacity=0][dash pattern={on 0.84pt off 2.51pt}] (400.97,144.95) .. controls (404.42,145.7) and (407.8,147.05) .. (410.96,149.05) .. controls (411.8,149.58) and (412.61,150.14) .. (413.38,150.74) -- (394.08,176.09) -- cycle ; \draw  [color={rgb, 255:red, 0; green, 0; blue, 0 }  ,draw opacity=1 ][dash pattern={on 0.84pt off 2.51pt}] (400.97,144.95) .. controls (404.42,145.7) and (407.8,147.05) .. (410.96,149.05) .. controls (411.8,149.58) and (412.61,150.14) .. (413.38,150.74) ;

\draw [color={rgb, 255:red, 0; green, 0; blue, 0 }  ,draw opacity=1 ] [dash pattern={on 4.5pt off 4.5pt}]  (361,100.67) -- (369.38,129.64) ;
\draw [color={rgb, 255:red, 0; green, 0; blue, 0 }  ,draw opacity=1 ] [dash pattern={on 4.5pt off 4.5pt}]  (369.38,129.64) -- (377,99.33) ;
\draw [color={rgb, 255:red, 0; green, 0; blue, 0 }  ,draw opacity=1 ] [dash pattern={on 4.5pt off 4.5pt}]  (361,160.31) -- (369.38,131.33) ;
\draw [color={rgb, 255:red, 0; green, 0; blue, 0 }  ,draw opacity=1 ] [dash pattern={on 4.5pt off 4.5pt}]  (369.38,131.33) -- (377,161.64) ;
\draw  [color={rgb, 255:red, 0; green, 0; blue, 0 }  ,draw opacity=1 ] (365.14,130.5) .. controls (365.21,128.16) and (367.18,126.32) .. (369.52,126.39) .. controls (371.87,126.47) and (373.71,128.44) .. (373.63,130.78) .. controls (373.55,133.13) and (371.59,134.97) .. (369.24,134.89) .. controls (366.9,134.81) and (365.06,132.85) .. (365.14,130.5) -- cycle ;

\draw  [draw opacity=0][dash pattern={on 0.84pt off 2.51pt}] (362,162.36) .. controls (365.43,163.19) and (369.06,163.45) .. (372.78,163.03) .. controls (373.79,162.91) and (374.79,162.75) .. (375.76,162.54) -- (369.38,131.33) -- cycle ; \draw  [color={rgb, 255:red, 0; green, 0; blue, 0 }  ,draw opacity=1 ][dash pattern={on 0.84pt off 2.51pt}] (362,162.36) .. controls (365.43,163.19) and (369.06,163.45) .. (372.78,163.03) .. controls (373.79,162.91) and (374.79,162.75) .. (375.76,162.54) ;

\draw  [draw opacity=0][dash pattern={on 0.84pt off 2.51pt}] (362.02,98.61) .. controls (365.46,97.78) and (369.09,97.53) .. (372.8,97.95) .. controls (373.79,98.06) and (374.76,98.22) .. (375.71,98.42) -- (369.38,129.64) -- cycle ; \draw  [color={rgb, 255:red, 0; green, 0; blue, 0 }  ,draw opacity=1 ][dash pattern={on 0.84pt off 2.51pt}] (362.02,98.61) .. controls (365.46,97.78) and (369.09,97.53) .. (372.8,97.95) .. controls (373.79,98.06) and (374.76,98.22) .. (375.71,98.42) ;

\draw [color={rgb, 255:red, 0; green, 0; blue, 0 }  ,draw opacity=1 ] [dash pattern={on 4.5pt off 4.5pt}]  (269.51,145.01) -- (296.12,159.23) ;
\draw [color={rgb, 255:red, 0; green, 0; blue, 0 }  ,draw opacity=1 ] [dash pattern={on 4.5pt off 4.5pt}]  (296.12,159.23) -- (279.72,132.62) ;
\draw [color={rgb, 255:red, 0; green, 0; blue, 0 }  ,draw opacity=1 ] [dash pattern={on 4.5pt off 4.5pt}]  (312.23,186.63) -- (297.33,160.41) ;
\draw [color={rgb, 255:red, 0; green, 0; blue, 0 }  ,draw opacity=1 ] [dash pattern={on 4.5pt off 4.5pt}]  (297.33,160.41) -- (324.35,176.1) ;
\draw  [color={rgb, 255:red, 0; green, 0; blue, 0 }  ,draw opacity=1 ] (293.77,162.87) .. controls (292.14,161.18) and (292.2,158.49) .. (293.89,156.86) .. controls (295.58,155.24) and (298.27,155.29) .. (299.9,156.98) .. controls (301.53,158.67) and (301.47,161.36) .. (299.78,162.99) .. controls (298.09,164.62) and (295.4,164.56) .. (293.77,162.87) -- cycle ;

\draw  [draw opacity=0][dash pattern={on 0.84pt off 2.51pt}] (314.4,187.35) .. controls (317.39,185.47) and (320.11,183.04) .. (322.4,180.09) .. controls (323.02,179.29) and (323.6,178.46) .. (324.14,177.62) -- (297.33,160.41) -- cycle ; \draw  [color={rgb, 255:red, 0; green, 0; blue, 0 }  ,draw opacity=1 ][dash pattern={on 0.84pt off 2.51pt}] (314.4,187.35) .. controls (317.39,185.47) and (320.11,183.04) .. (322.4,180.09) .. controls (323.02,179.29) and (323.6,178.46) .. (324.14,177.62) ;

\draw  [draw opacity=0][dash pattern={on 0.84pt off 2.51pt}] (268.75,142.85) .. controls (270.56,139.81) and (272.91,137.03) .. (275.8,134.67) .. controls (276.57,134.04) and (277.36,133.45) .. (278.17,132.91) -- (296.12,159.23) -- cycle ; \draw  [color={rgb, 255:red, 0; green, 0; blue, 0 }  ,draw opacity=1 ][dash pattern={on 0.84pt off 2.51pt}] (268.75,142.85) .. controls (270.56,139.81) and (272.91,137.03) .. (275.8,134.67) .. controls (276.57,134.04) and (277.36,133.45) .. (278.17,132.91) ;

\draw [color={rgb, 255:red, 0; green, 0; blue, 0 }  ,draw opacity=1 ] [dash pattern={on 4.5pt off 4.5pt}]  (294.24,224.26) -- (264.89,231.19) ;
\draw [color={rgb, 255:red, 0; green, 0; blue, 0 }  ,draw opacity=1 ] [dash pattern={on 4.5pt off 4.5pt}]  (264.89,231.19) -- (294.78,240.31) ;
\draw [color={rgb, 255:red, 0; green, 0; blue, 0 }  ,draw opacity=1 ] [dash pattern={on 4.5pt off 4.5pt}]  (234.68,221.28) -- (263.2,231.1) ;
\draw [color={rgb, 255:red, 0; green, 0; blue, 0 }  ,draw opacity=1 ] [dash pattern={on 4.5pt off 4.5pt}]  (263.2,231.1) -- (232.55,237.2) ;
\draw  [color={rgb, 255:red, 0; green, 0; blue, 0 }  ,draw opacity=1 ] (264.24,226.9) .. controls (266.58,227.1) and (268.32,229.15) .. (268.12,231.49) .. controls (267.93,233.83) and (265.87,235.57) .. (263.54,235.37) .. controls (261.2,235.18) and (259.46,233.12) .. (259.65,230.79) .. controls (259.85,228.45) and (261.9,226.71) .. (264.24,226.9) -- cycle ;

\draw  [draw opacity=0][dash pattern={on 0.84pt off 2.51pt}] (232.58,222.18) .. controls (231.57,225.57) and (231.14,229.18) .. (231.37,232.91) .. controls (231.44,233.93) and (231.55,234.93) .. (231.71,235.92) -- (263.2,231.1) -- cycle ; \draw  [color={rgb, 255:red, 0; green, 0; blue, 0 }  ,draw opacity=1 ][dash pattern={on 0.84pt off 2.51pt}] (232.58,222.18) .. controls (231.57,225.57) and (231.14,229.18) .. (231.37,232.91) .. controls (231.44,233.93) and (231.55,234.93) .. (231.71,235.92) ;

\draw  [draw opacity=0][dash pattern={on 0.84pt off 2.51pt}] (296.25,225.38) .. controls (296.9,228.85) and (296.97,232.49) .. (296.37,236.18) .. controls (296.21,237.16) and (296,238.12) .. (295.75,239.07) -- (264.89,231.19) -- cycle ; \draw  [color={rgb, 255:red, 0; green, 0; blue, 0 }  ,draw opacity=1 ][dash pattern={on 0.84pt off 2.51pt}] (296.25,225.38) .. controls (296.9,228.85) and (296.97,232.49) .. (296.37,236.18) .. controls (296.21,237.16) and (296,238.12) .. (295.75,239.07) ;

\draw [color={rgb, 255:red, 0; green, 0; blue, 0 }  ,draw opacity=1 ] [dash pattern={on 4.5pt off 4.5pt}]  (309.48,279.92) -- (295.3,306.55) ;
\draw [color={rgb, 255:red, 0; green, 0; blue, 0 }  ,draw opacity=1 ] [dash pattern={on 4.5pt off 4.5pt}]  (295.3,306.55) -- (321.88,290.11) ;
\draw [color={rgb, 255:red, 0; green, 0; blue, 0 }  ,draw opacity=1 ] [dash pattern={on 4.5pt off 4.5pt}]  (267.92,322.7) -- (294.12,307.76) ;
\draw [color={rgb, 255:red, 0; green, 0; blue, 0 }  ,draw opacity=1 ] [dash pattern={on 4.5pt off 4.5pt}]  (294.12,307.76) -- (278.47,334.8) ;
\draw  [color={rgb, 255:red, 0; green, 0; blue, 0 }  ,draw opacity=1 ] (291.65,304.2) .. controls (293.34,302.57) and (296.03,302.62) .. (297.66,304.31) .. controls (299.29,306) and (299.24,308.69) .. (297.55,310.32) .. controls (295.86,311.95) and (293.17,311.9) .. (291.54,310.21) .. controls (289.91,308.52) and (289.96,305.83) .. (291.65,304.2) -- cycle ;

\draw  [draw opacity=0][dash pattern={on 0.84pt off 2.51pt}] (267.21,324.87) .. controls (269.09,327.86) and (271.52,330.57) .. (274.47,332.86) .. controls (275.28,333.48) and (276.1,334.06) .. (276.95,334.59) -- (294.12,307.76) -- cycle ; \draw  [color={rgb, 255:red, 0; green, 0; blue, 0 }  ,draw opacity=1 ][dash pattern={on 0.84pt off 2.51pt}] (267.21,324.87) .. controls (269.09,327.86) and (271.52,330.57) .. (274.47,332.86) .. controls (275.28,333.48) and (276.1,334.06) .. (276.95,334.59) ;

\draw  [draw opacity=0][dash pattern={on 0.84pt off 2.51pt}] (311.64,279.16) .. controls (314.68,280.96) and (317.46,283.3) .. (319.83,286.19) .. controls (320.46,286.96) and (321.05,287.75) .. (321.59,288.56) -- (295.3,306.55) -- cycle ; \draw  [color={rgb, 255:red, 0; green, 0; blue, 0 }  ,draw opacity=1 ][dash pattern={on 0.84pt off 2.51pt}] (311.64,279.16) .. controls (314.68,280.96) and (317.46,283.3) .. (319.83,286.19) .. controls (320.46,286.96) and (321.05,287.75) .. (321.59,288.56) ;

\draw [color={rgb, 255:red, 0; green, 0; blue, 0 }  ,draw opacity=1 ] [dash pattern={on 4.5pt off 4.5pt}]  (359.67,306.67) -- (368.05,335.64) ;
\draw [color={rgb, 255:red, 0; green, 0; blue, 0 }  ,draw opacity=1 ] [dash pattern={on 4.5pt off 4.5pt}]  (368.05,335.64) -- (375.67,305.33) ;
\draw [color={rgb, 255:red, 0; green, 0; blue, 0 }  ,draw opacity=1 ] [dash pattern={on 4.5pt off 4.5pt}]  (359.67,366.31) -- (368.05,337.33) ;
\draw [color={rgb, 255:red, 0; green, 0; blue, 0 }  ,draw opacity=1 ] [dash pattern={on 4.5pt off 4.5pt}]  (368.05,337.33) -- (375.67,367.64) ;
\draw  [color={rgb, 255:red, 0; green, 0; blue, 0 }  ,draw opacity=1 ] (363.8,336.5) .. controls (363.88,334.16) and (365.84,332.32) .. (368.19,332.39) .. controls (370.54,332.47) and (372.38,334.44) .. (372.3,336.78) .. controls (372.22,339.13) and (370.26,340.97) .. (367.91,340.89) .. controls (365.57,340.81) and (363.73,338.85) .. (363.8,336.5) -- cycle ;

\draw  [draw opacity=0][dash pattern={on 0.84pt off 2.51pt}] (360.67,368.36) .. controls (364.1,369.19) and (367.73,369.45) .. (371.44,369.03) .. controls (372.46,368.91) and (373.45,368.75) .. (374.43,368.54) -- (368.05,337.33) -- cycle ; \draw  [color={rgb, 255:red, 0; green, 0; blue, 0 }  ,draw opacity=1 ][dash pattern={on 0.84pt off 2.51pt}] (360.67,368.36) .. controls (364.1,369.19) and (367.73,369.45) .. (371.44,369.03) .. controls (372.46,368.91) and (373.45,368.75) .. (374.43,368.54) ;

\draw  [draw opacity=0][dash pattern={on 0.84pt off 2.51pt}] (360.69,304.61) .. controls (364.12,303.78) and (367.75,303.53) .. (371.47,303.95) .. controls (372.45,304.06) and (373.43,304.22) .. (374.38,304.42) -- (368.05,335.64) -- cycle ; \draw  [color={rgb, 255:red, 0; green, 0; blue, 0 }  ,draw opacity=1 ][dash pattern={on 0.84pt off 2.51pt}] (360.69,304.61) .. controls (364.12,303.78) and (367.75,303.53) .. (371.47,303.95) .. controls (372.45,304.06) and (373.43,304.22) .. (374.38,304.42) ;

\draw [color={rgb, 255:red, 0; green, 0; blue, 0 }  ,draw opacity=1 ] [dash pattern={on 4.5pt off 4.5pt}]  (415.33,288.89) -- (440.51,305.51) ;
\draw [color={rgb, 255:red, 0; green, 0; blue, 0 }  ,draw opacity=1 ] [dash pattern={on 4.5pt off 4.5pt}]  (440.51,305.51) -- (426.65,277.5) ;
\draw [color={rgb, 255:red, 0; green, 0; blue, 0 }  ,draw opacity=1 ] [dash pattern={on 4.5pt off 4.5pt}]  (454.02,334.29) -- (441.61,306.8) ;
\draw [color={rgb, 255:red, 0; green, 0; blue, 0 }  ,draw opacity=1 ] [dash pattern={on 4.5pt off 4.5pt}]  (441.61,306.8) -- (467.06,324.92) ;
\draw  [color={rgb, 255:red, 0; green, 0; blue, 0 }  ,draw opacity=1 ] (437.83,308.92) .. controls (436.37,307.08) and (436.67,304.41) .. (438.51,302.95) .. controls (440.34,301.48) and (443.02,301.79) .. (444.48,303.62) .. controls (445.94,305.46) and (445.64,308.13) .. (443.81,309.59) .. controls (441.97,311.06) and (439.3,310.75) .. (437.83,308.92) -- cycle ;

\draw  [draw opacity=0][dash pattern={on 0.84pt off 2.51pt}] (456.11,335.2) .. controls (459.26,333.6) and (462.19,331.44) .. (464.75,328.72) .. controls (465.44,327.97) and (466.1,327.2) .. (466.71,326.41) -- (441.61,306.8) -- cycle ; \draw  [color={rgb, 255:red, 0; green, 0; blue, 0 }  ,draw opacity=1 ][dash pattern={on 0.84pt off 2.51pt}] (456.11,335.2) .. controls (459.26,333.6) and (462.19,331.44) .. (464.75,328.72) .. controls (465.44,327.97) and (466.1,327.2) .. (466.71,326.41) ;

\draw  [draw opacity=0][dash pattern={on 0.84pt off 2.51pt}] (414.78,286.66) .. controls (416.85,283.81) and (419.45,281.26) .. (422.55,279.17) .. controls (423.38,278.62) and (424.22,278.11) .. (425.08,277.64) -- (440.51,305.51) -- cycle ; \draw  [color={rgb, 255:red, 0; green, 0; blue, 0 }  ,draw opacity=1 ][dash pattern={on 0.84pt off 2.51pt}] (414.78,286.66) .. controls (416.85,283.81) and (419.45,281.26) .. (422.55,279.17) .. controls (423.38,278.62) and (424.22,278.11) .. (425.08,277.64) ;

\draw [color={rgb, 255:red, 0; green, 0; blue, 0 }  ,draw opacity=1 ] [dash pattern={on 4.5pt off 4.5pt}]  (500.42,222.32) -- (471.89,232.1) ;
\draw [color={rgb, 255:red, 0; green, 0; blue, 0 }  ,draw opacity=1 ] [dash pattern={on 4.5pt off 4.5pt}]  (471.89,232.1) -- (502.53,238.24) ;
\draw [color={rgb, 255:red, 0; green, 0; blue, 0 }  ,draw opacity=1 ] [dash pattern={on 4.5pt off 4.5pt}]  (440.85,225.22) -- (470.2,232.19) ;
\draw [color={rgb, 255:red, 0; green, 0; blue, 0 }  ,draw opacity=1 ] [dash pattern={on 4.5pt off 4.5pt}]  (470.2,232.19) -- (440.29,241.27) ;
\draw  [color={rgb, 255:red, 0; green, 0; blue, 0 }  ,draw opacity=1 ] (470.82,227.9) .. controls (473.17,227.87) and (475.1,229.74) .. (475.14,232.09) .. controls (475.17,234.43) and (473.3,236.36) .. (470.95,236.4) .. controls (468.61,236.44) and (466.67,234.57) .. (466.64,232.22) .. controls (466.6,229.87) and (468.47,227.94) .. (470.82,227.9) -- cycle ;

\draw  [draw opacity=0][dash pattern={on 0.84pt off 2.51pt}] (438.85,226.32) .. controls (438.18,229.79) and (438.11,233.43) .. (438.71,237.12) .. controls (438.87,238.12) and (439.08,239.11) .. (439.33,240.08) -- (470.2,232.19) -- cycle ; \draw  [color={rgb, 255:red, 0; green, 0; blue, 0 }  ,draw opacity=1 ][dash pattern={on 0.84pt off 2.51pt}] (438.85,226.32) .. controls (438.18,229.79) and (438.11,233.43) .. (438.71,237.12) .. controls (438.87,238.12) and (439.08,239.11) .. (439.33,240.08) ;

\draw  [draw opacity=0][dash pattern={on 0.84pt off 2.51pt}] (502.52,223.24) .. controls (503.52,226.63) and (503.95,230.24) .. (503.71,233.97) .. controls (503.64,234.96) and (503.53,235.94) .. (503.38,236.91) -- (471.89,232.1) -- cycle ; \draw  [color={rgb, 255:red, 0; green, 0; blue, 0 }  ,draw opacity=1 ][dash pattern={on 0.84pt off 2.51pt}] (502.52,223.24) .. controls (503.52,226.63) and (503.95,230.24) .. (503.71,233.97) .. controls (503.64,234.96) and (503.53,235.94) .. (503.38,236.91) ;

\draw [color={rgb, 255:red, 0; green, 0; blue, 0 }  ,draw opacity=1 ] [dash pattern={on 4.5pt off 4.5pt}]  (457.27,133.56) -- (441.99,159.57) ;
\draw [color={rgb, 255:red, 0; green, 0; blue, 0 }  ,draw opacity=1 ] [dash pattern={on 4.5pt off 4.5pt}]  (441.99,159.57) -- (469.23,144.26) ;
\draw [color={rgb, 255:red, 0; green, 0; blue, 0 }  ,draw opacity=1 ] [dash pattern={on 4.5pt off 4.5pt}]  (413.96,174.57) -- (440.76,160.73) ;
\draw [color={rgb, 255:red, 0; green, 0; blue, 0 }  ,draw opacity=1 ] [dash pattern={on 4.5pt off 4.5pt}]  (440.76,160.73) -- (423.99,187.1) ;
\draw  [color={rgb, 255:red, 0; green, 0; blue, 0 }  ,draw opacity=1 ] (438.45,157.08) .. controls (440.2,155.52) and (442.89,155.68) .. (444.44,157.44) .. controls (446,159.19) and (445.84,161.88) .. (444.08,163.44) .. controls (442.33,164.99) and (439.64,164.83) .. (438.08,163.08) .. controls (436.53,161.32) and (436.69,158.63) .. (438.45,157.08) -- cycle ;

\draw  [draw opacity=0][dash pattern={on 0.84pt off 2.51pt}] (413.16,176.71) .. controls (414.91,179.77) and (417.23,182.58) .. (420.08,184.99) .. controls (420.86,185.65) and (421.67,186.26) .. (422.49,186.83) -- (440.76,160.73) -- cycle ; \draw  [color={rgb, 255:red, 0; green, 0; blue, 0 }  ,draw opacity=1 ][dash pattern={on 0.84pt off 2.51pt}] (413.16,176.71) .. controls (414.91,179.77) and (417.23,182.58) .. (420.08,184.99) .. controls (420.86,185.65) and (421.67,186.26) .. (422.49,186.83) ;

\draw  [draw opacity=0][dash pattern={on 0.84pt off 2.51pt}] (459.46,132.89) .. controls (462.42,134.81) and (465.1,137.27) .. (467.35,140.26) .. controls (467.95,141.05) and (468.5,141.87) .. (469.01,142.7) -- (441.99,159.57) -- cycle ; \draw  [color={rgb, 255:red, 0; green, 0; blue, 0 }  ,draw opacity=1 ][dash pattern={on 0.84pt off 2.51pt}] (459.46,132.89) .. controls (462.42,134.81) and (465.1,137.27) .. (467.35,140.26) .. controls (467.95,141.05) and (468.5,141.87) .. (469.01,142.7) ;

\draw [color={rgb, 255:red, 0; green, 0; blue, 0 }  ,draw opacity=1 ] [dash pattern={on 4.5pt off 4.5pt}]  (324.69,152.81) -- (344.69,175.39) ;
\draw [color={rgb, 255:red, 0; green, 0; blue, 0 }  ,draw opacity=1 ] [dash pattern={on 4.5pt off 4.5pt}]  (344.69,175.39) -- (338.57,144.74) ;
\draw [color={rgb, 255:red, 0; green, 0; blue, 0 }  ,draw opacity=1 ] [dash pattern={on 4.5pt off 4.5pt}]  (350.27,206.69) -- (345.42,176.92) ;
\draw [color={rgb, 255:red, 0; green, 0; blue, 0 }  ,draw opacity=1 ] [dash pattern={on 4.5pt off 4.5pt}]  (345.42,176.92) -- (365.3,201.03) ;
\draw  [color={rgb, 255:red, 0; green, 0; blue, 0 }  ,draw opacity=1 ] (341.22,177.99) .. controls (340.29,175.84) and (341.27,173.33) .. (343.42,172.4) .. controls (345.58,171.46) and (348.08,172.44) .. (349.02,174.6) .. controls (349.95,176.75) and (348.97,179.25) .. (346.82,180.19) .. controls (344.66,181.13) and (342.16,180.14) .. (341.22,177.99) -- cycle ;

\draw  [draw opacity=0][dash pattern={on 0.84pt off 2.51pt}] (352.06,208.11) .. controls (355.51,207.39) and (358.9,206.06) .. (362.08,204.09) .. controls (362.94,203.55) and (363.77,202.98) .. (364.57,202.37) -- (345.42,176.92) -- cycle ; \draw  [color={rgb, 255:red, 0; green, 0; blue, 0 }  ,draw opacity=1 ][dash pattern={on 0.84pt off 2.51pt}] (352.06,208.11) .. controls (355.51,207.39) and (358.9,206.06) .. (362.08,204.09) .. controls (362.94,203.55) and (363.77,202.98) .. (364.57,202.37) ;

\draw  [draw opacity=0][dash pattern={on 0.84pt off 2.51pt}] (324.73,150.51) .. controls (327.48,148.29) and (330.65,146.51) .. (334.18,145.3) .. controls (335.12,144.97) and (336.07,144.7) .. (337.02,144.47) -- (344.69,175.39) -- cycle ; \draw  [color={rgb, 255:red, 0; green, 0; blue, 0 }  ,draw opacity=1 ][dash pattern={on 0.84pt off 2.51pt}] (324.73,150.51) .. controls (327.48,148.29) and (330.65,146.51) .. (334.18,145.3) .. controls (335.12,144.97) and (336.07,144.7) .. (337.02,144.47) ;

\draw [color={rgb, 255:red, 0; green, 0; blue, 0 }  ,draw opacity=1 ] [dash pattern={on 4.5pt off 4.5pt}]  (448.35,190.97) -- (424.82,209.84) ;
\draw [color={rgb, 255:red, 0; green, 0; blue, 0 }  ,draw opacity=1 ] [dash pattern={on 4.5pt off 4.5pt}]  (424.82,209.84) -- (455.73,205.23) ;
\draw [color={rgb, 255:red, 0; green, 0; blue, 0 }  ,draw opacity=1 ] [dash pattern={on 4.5pt off 4.5pt}]  (393.29,213.88) -- (423.26,210.49) ;
\draw [color={rgb, 255:red, 0; green, 0; blue, 0 }  ,draw opacity=1 ] [dash pattern={on 4.5pt off 4.5pt}]  (423.26,210.49) -- (398.2,229.16) ;
\draw  [color={rgb, 255:red, 0; green, 0; blue, 0 }  ,draw opacity=1 ] (422.4,206.25) .. controls (424.59,205.42) and (427.04,206.53) .. (427.87,208.72) .. controls (428.7,210.92) and (427.6,213.37) .. (425.4,214.2) .. controls (423.21,215.03) and (420.75,213.92) .. (419.92,211.73) .. controls (419.09,209.53) and (420.2,207.08) .. (422.4,206.25) -- cycle ;

\draw  [draw opacity=0][dash pattern={on 0.84pt off 2.51pt}] (391.78,215.59) .. controls (392.33,219.08) and (393.49,222.53) .. (395.3,225.8) .. controls (395.8,226.69) and (396.33,227.55) .. (396.9,228.37) -- (423.26,210.49) -- cycle ; \draw  [color={rgb, 255:red, 0; green, 0; blue, 0 }  ,draw opacity=1 ][dash pattern={on 0.84pt off 2.51pt}] (391.78,215.59) .. controls (392.33,219.08) and (393.49,222.53) .. (395.3,225.8) .. controls (395.8,226.69) and (396.33,227.55) .. (396.9,228.37) ;

\draw  [draw opacity=0][dash pattern={on 0.84pt off 2.51pt}] (450.65,191.12) .. controls (452.73,193.97) and (454.36,197.23) .. (455.39,200.82) .. controls (455.67,201.77) and (455.9,202.73) .. (456.08,203.69) -- (424.82,209.84) -- cycle ; \draw  [color={rgb, 255:red, 0; green, 0; blue, 0 }  ,draw opacity=1 ][dash pattern={on 0.84pt off 2.51pt}] (450.65,191.12) .. controls (452.73,193.97) and (454.36,197.23) .. (455.39,200.82) .. controls (455.67,201.77) and (455.9,202.73) .. (456.08,203.69) ;

\draw [color={rgb, 255:red, 0; green, 0; blue, 0 }  ,draw opacity=1 ] [dash pattern={on 4.5pt off 4.5pt}]  (393.59,250.08) -- (422.36,259.17) ;
\draw [color={rgb, 255:red, 0; green, 0; blue, 0 }  ,draw opacity=1 ] [dash pattern={on 4.5pt off 4.5pt}]  (422.36,259.17) -- (401.35,236.02) ;
\draw [color={rgb, 255:red, 0; green, 0; blue, 0 }  ,draw opacity=1 ] [dash pattern={on 4.5pt off 4.5pt}]  (443.23,283.15) -- (423.76,260.1) ;
\draw [color={rgb, 255:red, 0; green, 0; blue, 0 }  ,draw opacity=1 ] [dash pattern={on 4.5pt off 4.5pt}]  (423.76,260.1) -- (453.21,270.57) ;
\draw  [color={rgb, 255:red, 0; green, 0; blue, 0 }  ,draw opacity=1 ] (420.72,263.18) .. controls (418.81,261.81) and (418.36,259.16) .. (419.73,257.25) .. controls (421.09,255.34) and (423.75,254.9) .. (425.66,256.26) .. controls (427.57,257.63) and (428.01,260.28) .. (426.65,262.19) .. controls (425.28,264.1) and (422.63,264.54) .. (420.72,263.18) -- cycle ;

\draw  [draw opacity=0][dash pattern={on 0.84pt off 2.51pt}] (445.49,283.45) .. controls (448.09,281.05) and (450.31,278.17) .. (452.02,274.85) .. controls (452.49,273.94) and (452.9,273.03) .. (453.27,272.1) -- (423.76,260.1) -- cycle ; \draw  [color={rgb, 255:red, 0; green, 0; blue, 0 }  ,draw opacity=1 ][dash pattern={on 0.84pt off 2.51pt}] (445.49,283.45) .. controls (448.09,281.05) and (450.31,278.17) .. (452.02,274.85) .. controls (452.49,273.94) and (452.9,273.03) .. (453.27,272.1) ;

\draw  [draw opacity=0][dash pattern={on 0.84pt off 2.51pt}] (392.45,248.09) .. controls (393.66,244.77) and (395.46,241.61) .. (397.87,238.75) .. controls (398.51,237.99) and (399.19,237.27) .. (399.88,236.59) -- (422.36,259.17) -- cycle ; \draw  [color={rgb, 255:red, 0; green, 0; blue, 0 }  ,draw opacity=1 ][dash pattern={on 0.84pt off 2.51pt}] (392.45,248.09) .. controls (393.66,244.77) and (395.46,241.61) .. (397.87,238.75) .. controls (398.51,237.99) and (399.19,237.27) .. (399.88,236.59) ;

\draw [color={rgb, 255:red, 0; green, 0; blue, 0 }  ,draw opacity=1 ] [dash pattern={on 4.5pt off 4.5pt}]  (373.14,261.54) -- (389.8,286.68) ;
\draw [color={rgb, 255:red, 0; green, 0; blue, 0 }  ,draw opacity=1 ] [dash pattern={on 4.5pt off 4.5pt}]  (389.8,286.68) -- (388,255.48) ;
\draw [color={rgb, 255:red, 0; green, 0; blue, 0 }  ,draw opacity=1 ] [dash pattern={on 4.5pt off 4.5pt}]  (390.97,318.45) -- (390.31,288.29) ;
\draw [color={rgb, 255:red, 0; green, 0; blue, 0 }  ,draw opacity=1 ] [dash pattern={on 4.5pt off 4.5pt}]  (390.31,288.29) -- (406.64,314.94) ;
\draw  [color={rgb, 255:red, 0; green, 0; blue, 0 }  ,draw opacity=1 ] (386,288.77) .. controls (385.38,286.51) and (386.7,284.17) .. (388.96,283.54) .. controls (391.22,282.91) and (393.57,284.24) .. (394.19,286.5) .. controls (394.82,288.76) and (393.5,291.1) .. (391.24,291.73) .. controls (388.98,292.36) and (386.63,291.03) .. (386,288.77) -- cycle ;

\draw  [draw opacity=0][dash pattern={on 0.84pt off 2.51pt}] (392.54,320.11) .. controls (396.06,319.88) and (399.6,319.03) .. (403.02,317.52) .. controls (403.95,317.11) and (404.86,316.66) .. (405.73,316.17) -- (390.31,288.29) -- cycle ; \draw  [color={rgb, 255:red, 0; green, 0; blue, 0 }  ,draw opacity=1 ][dash pattern={on 0.84pt off 2.51pt}] (392.54,320.11) .. controls (396.06,319.88) and (399.6,319.03) .. (403.02,317.52) .. controls (403.95,317.11) and (404.86,316.66) .. (405.73,316.17) ;

\draw  [draw opacity=0][dash pattern={on 0.84pt off 2.51pt}] (373.5,259.27) .. controls (376.52,257.45) and (379.91,256.13) .. (383.58,255.42) .. controls (384.56,255.23) and (385.53,255.09) .. (386.51,255) -- (389.8,286.68) -- cycle ; \draw  [color={rgb, 255:red, 0; green, 0; blue, 0 }  ,draw opacity=1 ][dash pattern={on 0.84pt off 2.51pt}] (373.5,259.27) .. controls (376.52,257.45) and (379.91,256.13) .. (383.58,255.42) .. controls (384.56,255.23) and (385.53,255.09) .. (386.51,255) ;

\draw [color={rgb, 255:red, 0; green, 0; blue, 0 }  ,draw opacity=1 ] [dash pattern={on 4.5pt off 4.5pt}]  (349.13,258.32) -- (344.08,288.06) ;
\draw [color={rgb, 255:red, 0; green, 0; blue, 0 }  ,draw opacity=1 ] [dash pattern={on 4.5pt off 4.5pt}]  (344.08,288.06) -- (364.12,264.08) ;
\draw [color={rgb, 255:red, 0; green, 0; blue, 0 }  ,draw opacity=1 ] [dash pattern={on 4.5pt off 4.5pt}]  (323.2,312.03) -- (343.35,289.58) ;
\draw [color={rgb, 255:red, 0; green, 0; blue, 0 }  ,draw opacity=1 ] [dash pattern={on 4.5pt off 4.5pt}]  (343.35,289.58) -- (337.03,320.19) ;
\draw  [color={rgb, 255:red, 0; green, 0; blue, 0 }  ,draw opacity=1 ] (339.88,286.99) .. controls (340.97,284.91) and (343.54,284.11) .. (345.62,285.19) .. controls (347.7,286.28) and (348.5,288.85) .. (347.41,290.93) .. controls (346.32,293.01) and (343.75,293.81) .. (341.67,292.72) .. controls (339.59,291.63) and (338.79,289.07) .. (339.88,286.99) -- cycle ;

\draw  [draw opacity=0][dash pattern={on 0.84pt off 2.51pt}] (323.21,314.31) .. controls (325.94,316.55) and (329.1,318.36) .. (332.62,319.6) .. controls (333.58,319.94) and (334.55,320.22) .. (335.52,320.46) -- (343.35,289.58) -- cycle ; \draw  [color={rgb, 255:red, 0; green, 0; blue, 0 }  ,draw opacity=1 ][dash pattern={on 0.84pt off 2.51pt}] (323.21,314.31) .. controls (325.94,316.55) and (329.1,318.36) .. (332.62,319.6) .. controls (333.58,319.94) and (334.55,320.22) .. (335.52,320.46) ;

\draw  [draw opacity=0][dash pattern={on 0.84pt off 2.51pt}] (350.94,256.91) .. controls (354.4,257.66) and (357.77,259.01) .. (360.93,261) .. controls (361.78,261.53) and (362.58,262.1) .. (363.36,262.7) -- (344.08,288.06) -- cycle ; \draw  [color={rgb, 255:red, 0; green, 0; blue, 0 }  ,draw opacity=1 ][dash pattern={on 0.84pt off 2.51pt}] (350.94,256.91) .. controls (354.4,257.66) and (357.77,259.01) .. (360.93,261) .. controls (361.78,261.53) and (362.58,262.1) .. (363.36,262.7) ;

\draw [color={rgb, 255:red, 0; green, 0; blue, 0 }  ,draw opacity=1 ] [dash pattern={on 4.5pt off 4.5pt}]  (335.52,234.58) -- (312.48,254.04) ;
\draw [color={rgb, 255:red, 0; green, 0; blue, 0 }  ,draw opacity=1 ] [dash pattern={on 4.5pt off 4.5pt}]  (312.48,254.04) -- (343.26,248.65) ;
\draw [color={rgb, 255:red, 0; green, 0; blue, 0 }  ,draw opacity=1 ] [dash pattern={on 4.5pt off 4.5pt}]  (281.06,258.88) -- (310.94,254.73) ;
\draw [color={rgb, 255:red, 0; green, 0; blue, 0 }  ,draw opacity=1 ] [dash pattern={on 4.5pt off 4.5pt}]  (310.94,254.73) -- (286.36,274.04) ;
\draw  [color={rgb, 255:red, 0; green, 0; blue, 0 }  ,draw opacity=1 ] (309.96,250.52) .. controls (312.14,249.63) and (314.62,250.67) .. (315.5,252.85) .. controls (316.39,255.02) and (315.34,257.5) .. (313.17,258.39) .. controls (311,259.27) and (308.52,258.23) .. (307.63,256.05) .. controls (306.75,253.88) and (307.79,251.4) .. (309.96,250.52) -- cycle ;

\draw  [draw opacity=0][dash pattern={on 0.84pt off 2.51pt}] (279.59,260.64) .. controls (280.23,264.11) and (281.48,267.53) .. (283.38,270.75) .. controls (283.89,271.63) and (284.45,272.47) .. (285.04,273.28) -- (310.94,254.73) -- cycle ; \draw  [color={rgb, 255:red, 0; green, 0; blue, 0 }  ,draw opacity=1 ][dash pattern={on 0.84pt off 2.51pt}] (279.59,260.64) .. controls (280.23,264.11) and (281.48,267.53) .. (283.38,270.75) .. controls (283.89,271.63) and (284.45,272.47) .. (285.04,273.28) ;

\draw  [draw opacity=0][dash pattern={on 0.84pt off 2.51pt}] (337.82,234.68) .. controls (339.98,237.47) and (341.68,240.69) .. (342.81,244.25) .. controls (343.11,245.19) and (343.37,246.15) .. (343.57,247.1) -- (312.48,254.04) -- cycle ; \draw  [color={rgb, 255:red, 0; green, 0; blue, 0 }  ,draw opacity=1 ][dash pattern={on 0.84pt off 2.51pt}] (337.82,234.68) .. controls (339.98,237.47) and (341.68,240.69) .. (342.81,244.25) .. controls (343.11,245.19) and (343.37,246.15) .. (343.57,247.1) ;

\draw [color={rgb, 255:red, 0; green, 0; blue, 0 }  ,draw opacity=1 ] [dash pattern={on 4.5pt off 4.5pt}]  (282.15,203.27) -- (311.96,207.88) ;
\draw [color={rgb, 255:red, 0; green, 0; blue, 0 }  ,draw opacity=1 ] [dash pattern={on 4.5pt off 4.5pt}]  (311.96,207.88) -- (287.68,188.2) ;
\draw [color={rgb, 255:red, 0; green, 0; blue, 0 }  ,draw opacity=1 ] [dash pattern={on 4.5pt off 4.5pt}]  (336.24,228.4) -- (313.49,208.59) ;
\draw [color={rgb, 255:red, 0; green, 0; blue, 0 }  ,draw opacity=1 ] [dash pattern={on 4.5pt off 4.5pt}]  (313.49,208.59) -- (344.19,214.45) ;
\draw  [color={rgb, 255:red, 0; green, 0; blue, 0 }  ,draw opacity=1 ] (310.95,212.09) .. controls (308.85,211.03) and (308.01,208.48) .. (309.07,206.38) .. controls (310.13,204.29) and (312.69,203.45) .. (314.78,204.51) .. controls (316.88,205.56) and (317.72,208.12) .. (316.66,210.21) .. controls (315.6,212.31) and (313.05,213.15) .. (310.95,212.09) -- cycle ;

\draw  [draw opacity=0][dash pattern={on 0.84pt off 2.51pt}] (338.52,228.36) .. controls (340.72,225.6) and (342.48,222.41) .. (343.67,218.87) .. controls (343.99,217.9) and (344.26,216.93) .. (344.49,215.95) -- (313.49,208.59) -- cycle ; \draw  [color={rgb, 255:red, 0; green, 0; blue, 0 }  ,draw opacity=1 ][dash pattern={on 0.84pt off 2.51pt}] (338.52,228.36) .. controls (340.72,225.6) and (342.48,222.41) .. (343.67,218.87) .. controls (343.99,217.9) and (344.26,216.93) .. (344.49,215.95) ;

\draw  [draw opacity=0][dash pattern={on 0.84pt off 2.51pt}] (280.71,201.48) .. controls (281.41,198.01) and (282.71,194.62) .. (284.66,191.43) .. controls (285.18,190.58) and (285.73,189.76) .. (286.31,188.98) -- (311.96,207.88) -- cycle ; \draw  [color={rgb, 255:red, 0; green, 0; blue, 0 }  ,draw opacity=1 ][dash pattern={on 0.84pt off 2.51pt}] (280.71,201.48) .. controls (281.41,198.01) and (282.71,194.62) .. (284.66,191.43) .. controls (285.18,190.58) and (285.73,189.76) .. (286.31,188.98) ;

\end{tikzpicture}
\begin{adjustwidth}{65pt}{65pt}
       \caption
       {\small An illustration for collection of  punctured blind cones $\Gamma_{x_0}(c,v) \subset {\rm I\!R}^n \times {\rm I\!R}^n$. In this drawing balls are of radius $v$ and cones with apex angle $c$ are facing the observer $x_0$ at the center.}
\end{adjustwidth}
\end{figure}
\noindent Using the boundedness of the relative angular norm, as a consequence of Theorem \ref{RANB}, we will control the total amount of mass inside $N_{R}(x_0)$. Start with the following observation:
\begin{align*}
    \iint\limits_{N_{R}(x_0)} f(x,\xi,t)(|x||\xi|-x\cdot\xi) \ dxd\xi \leq \|f\|_{G}
\end{align*}
Let $\theta(x,\xi)$ be the angle between $x$ and $\xi$: 
\begin{align*}
   \iint\limits_{N_{R}(x_0)} f(x,\xi,t)(|x||\xi|-x\cdot\xi) \ dxd\xi = \iint\limits_{N_{R}(x_0)} f(x,\xi,t)|x||\xi|(1-\cos(\theta(x,\xi))) \ dxd\xi
 \end{align*}
Therefore:
\begin{align*}
  Rv(1-\cos(c)) \iint\limits_{N_{R}(x_0)} f(x,\xi,t)\ dxd\xi< \iint\limits_{N_{R}(x_0)} f(x,\xi,t)(|x||\xi| - x\cdot\xi)\ dxd\xi\leq \|f\|_{G}
   \end{align*}
And finally we get:
\begin{align}
  \iint\limits_{N_{R}(x_0)} f(x,\xi,t)\ dxd\xi < \frac{\|f\|_{G}}{Rv(1-\cos(c)) } 
\end{align} 
\noindent Now continue with the conservation of mass over time: 
\begin{align*}
\lim_{T\rightarrow\infty} \frac{1}{T}\int_{0}^{T}\int_{{\rm I\!R}^n}\int_{{\rm I\!R}^n} f(x,\xi, t)\ dxd\xi dt = {\bf M}
\end{align*}
We will break the space of velocities and positions in to the three non intersecting subsets as below: 
\begin{multline*}
\lim_{T\rightarrow\infty} \frac{1}{T}\int_{0}^{T}\Big(\iint\limits_{N_{R}(x_0)} f(x,\xi, t)\ dxd\xi+\iint\limits_{M_{R}(x_0)} f(x,\xi, t)\ dxd\xi\\+\iint\limits_{\Gamma_{x_{0}(c,v)}} f(x,\xi, t)\ dxd\xi \Big)dt ={\bf M} 
\end{multline*}
As a consequence of Corollary \ref{dispersionM} the total amount of mass within $M_{R}(x_0)$ is integrable over time, therefore the equation above leads to:  
\begin{align*}
\lim_{T\rightarrow\infty} \frac{1}{T}\int_{0}^{T}\big(\iint\limits_{N_{R}(x_0)} f(x,\xi, t)\ dxd\xi+\iint\limits_{\Gamma_{x_{0}(c,v)}} f(x,\xi, t)\ dxd\xi \big)dt={\bf M}
\end{align*}
Using (2.6) as a bound for the total amount of mass in $N_{R}(x_0)$ we get: 
\begin{align*}
{\bf M} - \frac{\|f\|_{G}}{Rv(1-\cos(c))}  \leq \liminf_{T\rightarrow\infty} \frac{1}{T}\int_{0}^{T}(\iint\limits_{\Gamma_{x_{0}(c,v)}} f(x,\xi, t)\ dxd\xi)dt \leq {\bf M}
\end{align*}
Because $R$ can be arbitrary large and $\|f\|_{G}$ is bounded, finally we conclude: 
\begin{align*}
    \lim_{T\rightarrow\infty} \frac{1}{T}\int_{0}^{T}(\iint\limits_{\Gamma_{x_0}(c,v)} f(x,\xi, t)\ dxd\xi)dt = {\bf M}
\end{align*}
\end{proof}
\begin{remark} The previous theorem implies that the total amount of mass concentrates in $\Gamma_{x_0}(c,v)$ as time goes to infinity. Equivalently one could say that on average over time, the mass is contained almost always within the collection of punctuate blind cones. Consider $\Gamma_{x_0}(c,v) \subset {\rm I\!R}^n \times {\rm I\!R}^n$ for any $x_0 \in {\rm I\!R}^n$ and arbitrarily small $c$ and $v$.
\end{remark}

We will finish this section by pointing out that combining Corollary \ref{dispersionM} with the previous theorem can improve its result. To do so we replace the punctured blind cones in the definition of $\Gamma_{x_0}(c,v)$ with the balls of radius $v$ within any fixed bounded region like $D$ and keep  $\Gamma_{x_0}(c,v)$ intact outside of the bounded set $D$. 
\newpage

\section{Scattering phenomenon and existence theory}
A possible and beneficial narrative that one can attribute to the results of the previous section, that are proven independent from the specific structure of interactions, is that they are evidence for an inevitable weak dispersion of particles. By weak we mean averaged over time or within a bounded set of the spatial variable. We will use the term scattering as a specific type of dispersion, that is a strong dispersion in a norm. This subject will naturally bring a discussion of existence theory with itself. The ideas discussed here will be used in Section 4 for the  purpose of creating a specific class of solutions to the Boltzmann equation.\\

\indent One could argue to start with the bare minimum, by searching for a norm over the space of positions and velocities, which if for any specific slice of time it is bounded, then the energy, mass, momentum and interactions are well defined at that time. 

\begin{definition}\label{indicator} Let $\chi_{D}(x)$ be the indicator function for set $D \subset {\rm I\!R}^n$: 
\begin{align*}
    \chi_{D}(x)=1 \ \ \ \ x\in D\\ 
      \chi_{D}(x)=0 \ \ \ \ x\notin D
\end{align*}
\end{definition} 
\begin{definition}\label{onorm}
We say $\|\|_{O}$ is an {\it observer's norm} if it is defined on the spatial and velocity variables for a fixed time. Furthermore, we expect that if observer's norm of $f$ is bounded at time $t$, then interaction $I(f, x, \xi, t)$ is well defined and mass, momentum and energy are convergent integrals at that slice of time:
\begin{align*}
     &\int_{{\rm I\!R}^n}\int_{{\rm I\!R}^n} f(x,\xi,t)\ dxd\xi < \infty \\ \|f\|_{O}(t) < \infty   \Longrightarrow &\int_{{\rm I\!R}^n}\int_{{\rm I\!R}^n} f(x,\xi,t)\xi \ dxd\xi < \infty \\ & \int_{{\rm I\!R}^n}\int_{{\rm I\!R}^n} f(x,\xi,t)|\xi|^2 \ dxd\xi < \infty \\
 \end{align*}
\end{definition}
\begin{remark}
Note that the observers norm is acting only on $x$ and $\xi$ variables for a fixed $t$, therefore its value could possibly be a function of time. This norm represents the measurement of physical descriptions such as position and velocity with respect to an observer at a slice of time. 
\end{remark}
\begin{definition}\label{snorm}
Let $\|\|_{S}$ be the {\it scattering norm} associated to the observers norm $\|\|_{O}$ defined as: 
\begin{align*}
    \|f\|_{S} = \sup_{t}\| f(x + t\xi, \xi, t)\|_{O}
\end{align*}
\end{definition}
\begin{remark}
The scattering norm involves all of the three variables, $x$, $\xi$ and $t$. Therefore, if it is bounded it will be constant.  Consider that if $\lambda(x,\xi,t)$ is a solution to the linear transport equation, then its scattering norm is equal to its observer's norm at time zero: $\|\lambda(x,\xi,t)\|_{S}$=$\|\lambda(x,\xi,0)\|_{O}$. The scattering norm is built upon an observer's norm by assigning an observer to each characteristic and following the particles along their trajectories.
\end{remark}
\begin{definition}\label{sospaces} For $0<C$, let $S(C)$ be the {\it scattering space} associated to scattering norm $\|\|_{S}$ defined as: 
\begin{align*}
     f(x, \xi, t) \in S(C) \Longleftrightarrow\|f\|_{S} \leq C
\end{align*}
\end{definition}
\begin{remark}
Consider that the scattering space defined above is not a vector spaces and is solely a space with a norm. It is possible to think of this space as ball of finite radius within a larger infinite dimensional vector space. We will identify the scattering space with its completion. 
\end{remark}

\begin{definition}\label{scattering solution}
$f$ is a {\it scattering solution} of equation (1.1) relative to an observer's norm $\|\|_{O}$ if: \begin{align*}
\|f(x, \xi, 0)\|_{O} +
\| \int_{0}^{\infty} |I(f,x+t\xi, \xi, t)| dt\|_{O} < \infty
\end{align*}
And for any bounded subset of the spatial variable like $D \subset {\rm I\!R}^n$ we expect: 
\begin{align}
    \lim_{z\rightarrow \infty }\|\chi_{D}(x) \int_{z}^{\infty} |I(f,x+t\xi, \xi, t)| dt\|_{O} = 0 
\end{align}
\noindent We say $f$ is a {\it uniform scattering solution} if the condition below, which is stronger than (3.1), holds true:
\begin{align}
    \lim_{z\rightarrow \infty }\| \int_{z}^{\infty} |I(f,x+t\xi, \xi, t)| dt\|_{O} = 0 
\end{align}
\end{definition}

\begin{remark} It is possible to represent a scattering solution like $f$ by integrating over the characteristics of equation (1.1): 
 \begin{align*}
    f(x,\xi, t) = f_{0}(x - t\xi, \xi) + \int_{0}^{t}I(f,x - t\xi+ s\xi,\xi, s)ds 
\end{align*}
The series of identities below describe the relation between the scattering and observer's norm of this solution. The following transition between the two norms occurs frequently hereinafter. 
\begin{align*}
\|f(x,\xi,t)\|_{S} = \sup_{t}\|f(x+t\xi,\xi,t)\|_{O}=\sup_{t}\|f_{0}(x, \xi) + \int_{0}^{t}I(f,x + s\xi,\xi, s)ds\|_{O}
\end{align*}
Therefore, the following bound holds true for a scattering norm: 
\begin{align*}
\|f(x,\xi,t)\|_{S} \leq \|f(x, \xi, 0)\|_{O} +\|\int_{0}^{\infty} |I(x+t\xi, \xi,t)| dt\|_{O} 
\end{align*}
Equivalently we have: 
\begin{align*}
     f \in S\big(\|f(x, \xi, 0)\|_{O}+  \|\int_{0}^{\infty} |I(x+t\xi, \xi,t)| dt\|_{O}\big)
 \end{align*}
\end{remark}
\begin{figure}[t]
    \centering
\tikzset{every picture/.style={line width=0.75pt}} 

\begin{tikzpicture}[x=0.75pt,y=0.75pt,yscale=-1,xscale=1]

\draw    (133.5,165) .. controls (158,49.5) and (183,51.5) .. (196,91.5) .. controls (209,131.5) and (230,263.5) .. (237.5,200) .. controls (245,136.5) and (267.5,230) .. (271.5,171) .. controls (275.5,112) and (283.5,168) .. (289.5,166) .. controls (295.5,164) and (294.5,138) .. (308.5,152) ;
\draw    (132.5,18.5) -- (132.5,236) ;
\draw  [dash pattern={on 4.5pt off 4.5pt}]  (311.5,154) .. controls (326.5,164) and (328.5,152) .. (334.5,144) .. controls (340.5,136) and (348.5,148) .. (378.5,142) ;
\draw    (133.5,211.75) -- (340.5,158) ;
\draw  [dash pattern={on 4.5pt off 4.5pt}]  (340.5,158) -- (354.25,154.91) -- (380.5,149) ;
\draw    (132,124.25) -- (340.5,71) ;
\draw  [dash pattern={on 4.5pt off 4.5pt}]  (340.5,71) -- (380.5,62) ;
\draw [line width=0.75]    (133,236) -- (385,236) ;
\draw [shift={(388,236)}, rotate = 180] [fill={rgb, 255:red, 0; green, 0; blue, 0 }  ][line width=0.08]  [draw opacity=0] (8.93,-4.29) -- (0,0) -- (8.93,4.29) -- cycle    ;
\draw  [dash pattern={on 0.84pt off 2.51pt}]  (199.69,112.31) -- (199.69,233.38) ;
\draw  [fill={rgb, 255:red, 255; green, 255; blue, 255 }  ,fill opacity=1 ] (196.88,107.5) .. controls (196.88,105.95) and (198.13,104.69) .. (199.69,104.69) .. controls (201.24,104.69) and (202.5,105.95) .. (202.5,107.5) .. controls (202.5,109.05) and (201.24,110.31) .. (199.69,110.31) .. controls (198.13,110.31) and (196.88,109.05) .. (196.88,107.5) -- cycle ;
\draw  [fill={rgb, 255:red, 255; green, 255; blue, 255 }  ,fill opacity=1 ] (129.9,124.44) .. controls (129.9,122.88) and (131.16,121.63) .. (132.71,121.63) .. controls (134.27,121.63) and (135.53,122.88) .. (135.53,124.44) .. controls (135.53,125.99) and (134.27,127.25) .. (132.71,127.25) .. controls (131.16,127.25) and (129.9,125.99) .. (129.9,124.44) -- cycle ;
\draw  [fill={rgb, 255:red, 255; green, 255; blue, 255 }  ,fill opacity=1 ] (129.9,211.75) .. controls (129.9,210.2) and (131.16,208.94) .. (132.71,208.94) .. controls (134.27,208.94) and (135.53,210.2) .. (135.53,211.75) .. controls (135.53,213.3) and (134.27,214.56) .. (132.71,214.56) .. controls (131.16,214.56) and (129.9,213.3) .. (129.9,211.75) -- cycle ;
\draw  [fill={rgb, 255:red, 255; green, 255; blue, 255 }  ,fill opacity=1 ] (196.88,235.19) .. controls (196.88,233.63) and (198.13,232.38) .. (199.69,232.38) .. controls (201.24,232.38) and (202.5,233.63) .. (202.5,235.19) .. controls (202.5,236.74) and (201.24,238) .. (199.69,238) .. controls (198.13,238) and (196.88,236.74) .. (196.88,235.19) -- cycle ;
\draw  [fill={rgb, 255:red, 255; green, 255; blue, 255 }  ,fill opacity=1 ] (129.88,167) .. controls (129.88,165.45) and (131.13,164.19) .. (132.69,164.19) .. controls (134.24,164.19) and (135.5,165.45) .. (135.5,167) .. controls (135.5,168.55) and (134.24,169.81) .. (132.69,169.81) .. controls (131.13,169.81) and (129.88,168.55) .. (129.88,167) -- cycle ;

\draw (105.5,110) node [anchor=north west][inner sep=0.75pt]    {$f_{z}$};
\draw (100.5,201) node [anchor=north west][inner sep=0.75pt]    {$f_{\infty }$};
\draw (200,238) node [anchor=north west][inner sep=0.75pt]    {$z$};
\draw (188,41.5) node [anchor=north west][inner sep=0.75pt]    {$f$};

\end{tikzpicture}

\begin{adjustwidth}{65pt}{65pt}
           \caption{\small $f_z$ and $f_{\infty}$ are solutions to the linear transport equation. As time goes to infinity, $f$ scatters to $f_{\infty}$ in a specific sense.} 
\end{adjustwidth}
\end{figure}
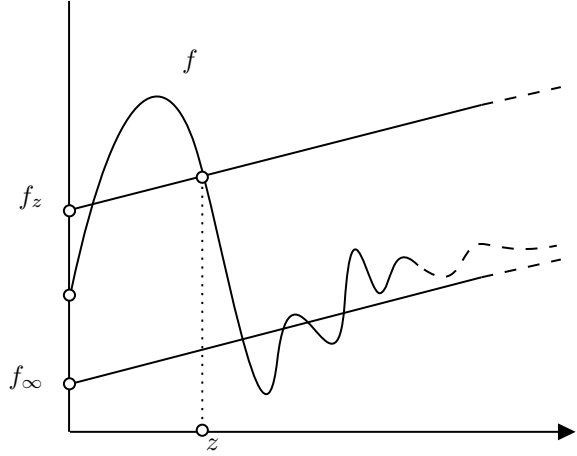
\begin{theorem}\label{scatteringthm} Assume that $f$ is a scattering solution of equation (1.1):  
\begin{align} N = \|f(x, \xi, 0)\|_{O}+ \|\int_{0}^{\infty} |I(x+t\xi, \xi,t)| dt\|_{O} < \infty
\end{align} 
There exists a unique solution of the linear transport equation $f_{\infty}\in S(N)$, such that $f$ {\it scatters} to $f_{\infty}$ within any compact subset of the spatial variable like $D \subset {\rm I\!R}^n$ in the sense defined below:
\begin{align*}
    \lim_{t \rightarrow \infty}\|\chi_D(x)\big(f(x+t\xi,\xi,t) - f_{\infty}(x,\xi, 0)\big)\|_{O}=0
\end{align*}

\end{theorem}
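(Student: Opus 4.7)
The plan is to build $f_\infty$ explicitly from the characteristic formulation, show it lies in $S(N)$, verify the scattering convergence directly from Definition \ref{scattering solution}, and extract uniqueness from the fact that the difference of two candidates is independent of $t$.

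First I would represent $f$ along characteristics:
\begin{align*}
f(x+t\xi,\xi,t) = f(x,\xi,0) + \int_0^t I(f,x+s\xi,\xi,s)\,ds.
\end{align*}
The hypothesis $N<\infty$ forces the integral $\int_0^\infty |I(f,x+s\xi,\xi,s)|\,ds$ to be finite for almost every $(x,\xi)$ (since its observer's norm is bounded), so I can define a candidate ``end state'' at time zero by
\begin{align*}
g(x,\xi) := f(x,\xi,0) + \int_0^\infty I(f,x+s\xi,\xi,s)\,ds,
\end{align*}
and then set $f_\infty(x,\xi,t) := g(x-t\xi,\xi)$, which is by construction a solution of the linear transport equation. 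The triangle inequality for the observer's norm, combined with the definition of $N$, gives $\|g\|_{O} \leq N$, and since $f_\infty$ is a pure transport state its scattering norm coincides with the observer's norm of its initial value (as noted in the remark following Definition \ref{snorm}); thus $f_\infty \in S(N)$.

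Next I would establish the convergence. Subtracting the characteristic representation from $g$ yields the clean identity
\begin{align*}
f(x+t\xi,\xi,t) - f_\infty(x,\xi,0) = \int_t^\infty I(f,x+s\xi,\xi,s)\,ds.
\end{align*}
Multiplying by $\chi_D(x)$, taking the observer's norm, and using the triangle inequality for the integral gives
\begin{align*}
\|\chi_D(x)\bigl(f(x+t\xi,\xi,t)-f_\infty(x,\xi,0)\bigr)\|_{O} \leq \Bigl\|\chi_D(x)\int_t^\infty |I(f,x+s\xi,\xi,s)|\,ds\Bigr\|_{O},
\end{align*}
and the right-hand side tends to zero as $t\to\infty$ by the defining condition (3.1) of a scattering solution.

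Finally, for uniqueness, suppose $\tilde f_\infty \in S(N)$ is another linear transport state with the same compactly supported convergence property. Applying the triangle inequality to
\begin{align*}
\|\chi_D(x)\bigl(f_\infty(x,\xi,0)-\tilde f_\infty(x,\xi,0)\bigr)\|_{O},
\end{align*}
the right side tends to $0$ as $t\to\infty$ while the left side is independent of $t$, forcing it to vanish for every compact $D$. Since the observer's norm detects mass, momentum and energy (Definition \ref{onorm}), this forces $f_\infty(\cdot,\cdot,0) = \tilde f_\infty(\cdot,\cdot,0)$ as functions, and hence the two transport solutions coincide.

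The only genuinely delicate point is the pointwise a.e. existence of $\int_0^\infty I(f,x+s\xi,\xi,s)\,ds$, which must be argued from $\|\int_0^\infty|I|\,ds\|_{O}<\infty$ together with the fact that the observer's norm controls integrable quantities at a fixed time; everything else is a triangle inequality once the representation along characteristics is in place.
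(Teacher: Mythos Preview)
Your argument is correct and takes a more direct route than the paper. The paper does not write down the limit explicitly; instead it introduces the linearizations $f_z(x,\xi,t)=f(x-(t-z)\xi,\xi,z)$, shows that $\chi_D(x-t\xi)f_z$ is Cauchy in the scattering norm (using condition (3.1)), invokes completeness to obtain a limit $f_\infty^D$ on each compact $D$, and then checks that the $f_\infty^D$ agree on overlaps so as to patch them into a single global $f_\infty$. Your explicit formula $g(x,\xi)=f(x,\xi,0)+\int_0^\infty I(f,x+s\xi,\xi,s)\,ds$ bypasses both the Cauchy-sequence step and the patching, at the cost of the pointwise-existence issue you flag. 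That trade-off is genuine: the paper's abstract Cauchy argument never needs the improper integral to converge at individual points, only that tails are small in norm, which is exactly (3.1); on the other hand, your construction makes the identity $f(x+t\xi,\xi,t)-f_\infty(x,\xi,0)=-\int_t^\infty I\,ds$ transparent and gives $f_\infty\in S(N)$ in one line. (Note the sign: the difference is $-\int_t^\infty$, not $+\int_t^\infty$, though this is immaterial once you pass to $|I|$.) Your uniqueness argument is also fine, but the justification is simply that $\|\cdot\|_O$ is a norm: $\|\chi_D h\|_O=0$ forces $\chi_D h=0$, and letting $D$ exhaust $\mathbb{R}^n$ gives $h=0$; the reference to mass, momentum and energy is not needed.
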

\begin{proof}
 Consider Figure 4 and let $f_{z}(x,\xi,t)$ for $0\leq z$ be the linearization of $f$ at time $z$ defined as: 
\begin{align*}
    f_{z}(x, \xi, t)=f(x-(t-z)\xi,\xi,z)
\end{align*}
Using the definition above we will construct a family of solutions to the linear transport equation by cropping $f_z$ within an arbitrary compact subset of the spatial variable like $D \subset {\rm I\!R}^n$:    
\begin{align*}
    \chi_D(x - t\xi)f_z(x,\xi,t)
\end{align*}

\noindent Note that $f_{z}$ and $\chi_D(x - t\xi)f_z(x,\xi,t)$ belong to $S(N)$. Using the properties of scattering solutions, we will show that $\chi_D(x - t\xi)f_z(x,\xi,t)$ is a Cauchy sequence. Recall definitions of the observer's and scattering norms, it follows that for $0\leq z_1 \leq z_2$ we get: 
\begin{multline*}
   \|\chi_D(x - t\xi)\big(f(x-(t-z_{1})\xi,\xi,z_{1})-f(x-(t-z_{2})\xi,\xi,z_{2})\big)\|_{S}\\ = \|\chi_D(x - t\xi)\int_{z_{1}}^{z_{2}}I(f, x-(t-z)\xi, \xi, z)dz\|_{S} \\ = \sup_{t} \|\chi_D(x)\int_{z_{1}}^{z_{2}}I(f, x+z\xi, \xi, z)dz\|_{O}
\end{multline*}
The previous computation leads to:
\begin{align}
     \| \chi_D(x - t\xi)(f_{z_1}(x,\xi,t) - f_{z_{2}}(x,\xi,t))\|_{S} = \|\chi_D(x )\int_{z_1}^{z_2} |I(f,x+z\xi,\xi, z)|dz\|_{O}
\end{align}
\noindent Since $f$ is a scattering solution it follows from property (3.1) that:    
\begin{align*}
    \forall \epsilon > 0 \  \exists T > 0 \ \ s.t \ \ \ T< z_1\leq z_2 \Longrightarrow   \|\chi_D(x )\int_{z_1}^{z_2} |I(f,x+z\xi,\xi, z)|dz\|_{O}  < \epsilon
\end{align*} 
\noindent The previous statement paired with (3.4)  implies that the sequence of functions $\chi_D(x-t\xi)f_{z}(x,\xi,t)$ is a Cauchy sequence converging to another solution of the linear transport like $f^{D}_{\infty} \in S(N)$. We conclude that:
\begin{equation}
\begin{aligned}
     \lim_{z\rightarrow\infty}\|\chi_{D}(x-t\xi)(f_z(x,\xi,t) - f^{D}_{\infty}(x,\xi,t))\|_{S}=0 
\end{aligned}
\end{equation}
The family of linear states $f^{D}_{\infty}$ are corresponding to different compact subsets of the spatial variable. We will show it is possible to consolidate these functions in a consistent manner and create a unique solution to the transport equation that satisfies the claims of the theorem. Consider the identities below: 
\begin{multline*}
    \|\chi_{D}(x-t\xi)(f_z(x,\xi,t) - f^{D}_{\infty}(x,\xi,t))\|_{S}\\ = \sup_{t}\|\chi_{D}(x)(f_z(x+t\xi,\xi,t) - f^{D}_{\infty}(x+t\xi,\xi,t))\|_{O}\\= \|\chi_{D}(x)(f(x+z\xi, \xi,z)-f^{D}_{\infty}(x,\xi,0))\|_{O}  
\end{multline*}
Therefore from (3.5) we get: 
\begin{align*}
           \lim_{z\rightarrow\infty}\|\chi_{D}(x)(f(x+z\xi, \xi,z)-f^{D}_{\infty}(x,\xi,0))\|_{O}=0
\end{align*}
Assume that $D_1,D_2 \subset {\rm I\!R}^n$ are two arbitrary compact sets. From the previous computation we conclude that the values of $f^{D_1}_{\infty}$ and $f^{D_2}_{\infty}$ agree at time zero wherever $D_1$ and $D_2$ overlap:   
\begin{equation}
\begin{aligned}
    x \in D_{1} \cap D_{2} \Longrightarrow f^{D_1}_{\infty}(x,\xi,0) = f^{D_2}_{\infty}(x,\xi,0)=\lim_{z\rightarrow\infty}f(x+z\xi, \xi, z)
\end{aligned}
\end{equation}
Now let $f_{\infty}(x,\xi,t) \in S(N)$ be a solution the linear transport equation evolving from the initial value defined below, where $B(x,1) \subset {\rm I\!R}^n$ is a ball of radius 1 centered at $x$.  
\begin{align*}
    f_{\infty}(x,\xi,0)=f^{B(x,1)}_{\infty}(x,\xi,0)
\end{align*}
 We deduce from (3.6) that $f_{\infty}$ is well defined at time zero by the expression above. Note that the ball centered at $x$ can be replaced by any arbitrary open neighborhood around $x$. Finally using (3.5) again we get: 
\begin{align*}
    \lim_{t \rightarrow \infty}\|\chi_D(x)\big(f(x+t\xi,\xi,t) - f_{\infty}(x,\xi, 0)\big)\|_{O}=0
\end{align*}

\end{proof}

\begin{corollary}\label{uscatteringc}
Assume $f$ is a uniform scattering solution to the equation (1.1) and $N$ is as defined in (3.3). There exists a unique solution to the linear transport equation $f_{\infty} \in S(N)$, such that $f$ {\it uniformly scatters} to $f_{\infty}$ in the sense defined below:
\begin{align*}
    \lim_{t\rightarrow\infty}\|f(x+t\xi,\xi,t) - f_{\infty}(x,\xi,0)\|_{O} = 0
\end{align*}
\begin{proof}
Under the stronger assumption of having a uniform scattering solution, the argument of Theorem \ref{scatteringthm} can be replicated without any reference to the bounded sets. In that case, by removing $\chi_{D}(x)$ from the computations before, one can prove that the sequence $f_{z}$ is a Cauchy sequence converging to a unique solution of the transport equation $f_{\infty} \in S(N)$.
\end{proof}
\end{corollary}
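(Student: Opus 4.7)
The plan is to mirror the strategy of Theorem \ref{scatteringthm} but exploit the stronger hypothesis (3.2) to dispense with the spatial cutoff $\chi_D$ entirely. First I would set $f_z(x,\xi,t) = f(x-(t-z)\xi,\xi,z)$, which for each $z \geq 0$ is a solution of the linear transport equation; by Definition \ref{scattering solution} together with the integral representation of $f$ along characteristics, each $f_z$ lies in $S(N)$.

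The heart of the argument is to show that $\{f_z\}_{z \geq 0}$ is Cauchy in $\|\cdot\|_S$. For $0 \leq z_1 \leq z_2$, integrating equation (1.2) along characteristics and using the definition of the scattering norm gives
\[
\|f_{z_1} - f_{z_2}\|_S = \sup_t \|f(x+z_1\xi,\xi,z_1) - f(x+z_2\xi,\xi,z_2)\|_O = \Bigl\|\int_{z_1}^{z_2} I(f,x+s\xi,\xi,s)\,ds\Bigr\|_O,
\]
which by the triangle inequality for $\|\cdot\|_O$ is bounded by $\bigl\|\int_{z_1}^{z_2}|I(f,x+s\xi,\xi,s)|\,ds\bigr\|_O$. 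The uniform scattering condition (3.2) asserts exactly that this quantity tends to zero as $z_1 \to \infty$, so the Cauchy property follows.

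Invoking the completion of $S(N)$ (per the remark after Definition \ref{sospaces}), there exists a limit $f_\infty \in S(N)$. Since each $f_z$ is a solution of the linear transport equation and the scattering norm is evaluated after pulling back along characteristics, the limit $f_\infty$ is again a transport solution. Evaluating at $t=z$ and unpacking the scattering norm then yields
\[
\|f(x+z\xi,\xi,z) - f_\infty(x,\xi,0)\|_O \leq \|f_z - f_\infty\|_S \to 0 \quad \text{as } z \to \infty,
\]
which is precisely the claimed uniform scattering. Uniqueness is immediate: any other candidate $g_\infty \in S(N)$ with the same limiting property would satisfy $\|f_\infty(x,\xi,0) - g_\infty(x,\xi,0)\|_O = 0$ by the triangle inequality, and since both are linear transport solutions this forces $g_\infty \equiv f_\infty$.

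The step I expect to require the most care is checking that the limit $f_\infty$, produced abstractly through completion, is a bona fide function satisfying the transport equation and for which the convergence statement of Corollary \ref{uscatteringc} holds in the pointwise observer-norm sense rather than only in an equivalence-class sense. This reduces to how concretely the observer's norm encodes information along each characteristic; the minimal requirements in Definition \ref{onorm} (that mass, momentum, energy, and the interaction be well defined whenever $\|\cdot\|_O$ is finite) are essentially what is needed to promote the abstract Cauchy limit to a genuine function, and in any concrete realization, such as the weighted $L^\infty$ setting relevant to Section 4, the point becomes routine.
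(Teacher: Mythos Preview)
Your proposal is correct and follows essentially the same approach as the paper: the paper's proof simply states that one replicates the argument of Theorem \ref{scatteringthm} with $\chi_D$ removed, showing $f_z$ is Cauchy in $S(N)$, and you have spelled out exactly those details. Your additional remarks on uniqueness and on the nature of the completion limit go slightly beyond what the paper writes but are consistent with its framework.
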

\indent We will end this section with a discussion of the existence theory for scattering solutions to equation (1.1). Theorem \ref{scatteringthm} and Corollary \ref{uscatteringc} provide a natural procedure to construct scattering solutions. Assume that $f$ is a scattering solution that scatters to a linear state like $f_{\infty}$. Choose $\lambda(x,\xi,t)$ to be any positive solution of the linear transport equation such that: 
 \begin{align*} f_{\infty}(x+t\xi,\xi,t)=f_{\infty}(x,\xi,0)\leq \lambda(x,\xi,0)=\lambda(x+t\xi,\xi,t)
\end{align*}
\noindent This positive $\lambda$ can be used for creation of observer's and scattering norms associated to $f$. In particular, the norms below are well defined: 
\begin{align*}
   \|f\|_{O} &= \sup_{x,\xi}\frac{1}{\lambda(x+t\xi,\xi,t)} f(x,\xi, t) < \infty \\ \|f\|_{S} &=\sup_{x,\xi,t}\frac{1}{\lambda(x+t\xi,\xi,t)} f(x+t\xi,\xi, t) < \infty 
\end{align*}
Working from the converse direction with an appropriate $\lambda$, one would hope to create a class of solutions that scatter to linear states below $\lambda$. Consider the solution map for equation (1.1) defined below. The fixed points of this map are solutions of the equation: 
\begin{align*}
    \Phi(f)(x,\xi, t) = f_{0}(x - t\xi, \xi) + \int_{0}^{t}I(f,x - t\xi+ s\xi,\xi, s)ds   
\end{align*}
\indent Assume that the initial value $f_{0}$ is sufficiently below $\lambda$ and that appropriate estimates for interaction $I(\lambda, x, \xi,t)$ along the characteristic of equation (1.1) exist. Using these estimates one would hope to prove that solutions starting from such initial values will remain uniformly bounded by $\lambda$. In this setting we will construct a scattering space such that iterations of the solution map remain within the same space. Therefore, a variety of fixed point methods become tools to develop an existence theory. The question whether this map has a fixed point requires more information on the specific structure of interactions.\\
 
\indent The notion of scattering that got developed in this section led us to the previous discussion on the existence theory of solutions which converge to a linear state. This framework is applicable to a variety of dispersive non linear partial differential equations. We will implement these ideas in the next chapter in the case of the Boltzmann equation and show the existence of a class of scattering solutions that enjoy asymptotic completeness and stability in the $L^{\infty}$ setting.

\newpage 
\section{The Boltzmann equation}
In this section, we briefly introduce the Boltzmann equation in the case of hard spheres. A comprehensive treatment of the mathematical theory behind the classical solutions to the Boltzmann equation can be found in the works of Bressan \cite{bressan}, Cercignani \cite {MR1307620, MR1313028}, and Ukai \cite{senji}. Furthermore, a concept of renormalized solutions exists as introduced by DiPerna and Lions \cite{MR1014927, MR1022305} that provides a very general existence theory of weak solutions. The counterpart of this renormalization in the microscopic scale is unknown. This missing link makes it infeasible to completely relate these weak solutions to the underlying physical phenomena which the Boltzmann equation is intended to describe, since  the conservation laws are rooted in the dynamics of microscopic scale. \\

\indent Here, we pursue classical solutions to demonstrate the utility of the previous sections' results. After introducing the preliminaries we will show the existence of scattering solutions to the Boltzmann equation in the case of hard spheres, for a class of initial values with a special property. Furthermore, we will show that for any solution to the linear transport equation in an appropriate space, there exists solutions to the Boltzmann equation that scatter to linear states arbitrarily close to the previously described solution of the linear transport equation. 

\subsection{Preliminaries}
The Boltzmann equation has a similar form to equation (1.1). The notation $Q(f,f)(x,\xi,t)$ represents the interaction of particles and is called the Boltzmann collision operator:  
\begin{align*}
\partial_t f(x,\xi,t) + \xi.\nabla_x f(x,\xi,t) &= Q(f, f)(x,\xi,t) \\  f(0, x, \xi) &= f_0(x,\xi) 
\end{align*}
In the case which this operator represents collisions of hard elastic spheres it obtains the form below:
\begin{gather*}
    Q(f, f)(x,\xi,t) = \int_{{\rm I\!R}^n}\int_{S^{n-1}}(f^{'} f^{'}_{*} - f f_{*})|{\bf n}.(\xi- \xi_{*})| d{\bf n}d\xi_*  
 \end{gather*}
In the expression above, ${\bf n}$ is the normal unit vector to  the $n-1$ dimensional unit sphere $S^{n-1}$ and, as is customary in the field, we used the notation below to describe the velocity of particles before and after the collisions: 
\begin{gather*}
    \xi^{'} = \xi - {\bf n}.(\xi- \xi_{*}){\bf n}  \\
     \xi^{'}_{*} = \xi_{*} + {\bf n}.(\xi- \xi_{*}){\bf n}  \\ 
f = f(x,\xi,t),\ f_{*} = f(x,\xi_*,t) \\ f^{'} = f(x, \xi^{'}, t) , \ f^{'}_{*} = f(x, \xi^{'}_{*}, t) 
\end{gather*}
\indent Assume that $\xi$ and $\xi_{*}$ are velocities of two colliding elastic balls with unit mass, and let ${\bf n}$ or equivalently $-{\bf n}$, represent the unit normal vector to the plane which uniquely describes the relative position of these two spheres upon the collision. Then $\xi^{'}$ and $\xi_{*}^{'}$ are the velocities of the  particles after the collision. For a fixed ${\bf n}$, these velocities are unique solutions to the conservation laws of momentum and energy written below: 
\begin{align*}
    |\xi|^2 + |\xi_{*}|^2 &= |\xi^{'}|^2 + |\xi^{'}_{*}|^2 \\ \xi + \xi_{*} &= \xi^{'}+\xi_{*}^{'}
\end{align*}

As shown by Boltzmann, the collision operator satisfies the conditions of Definition \ref{interaction} and therefore is a mesoscopic interaction. We will replicate his argument here. Consider the  well-known change of variables below:
\begin{align*}
    (\xi, \xi_{*}) \Longrightarrow (\xi_* , \xi), \ \ \ (\xi, \xi_{*}) \Longrightarrow (\xi^{'} , \xi_{*}^{'}), \ \ \   (\xi, \xi_{*}) \Longrightarrow (\xi^{'}_* , \xi^{'})  
\end{align*}
The transformations above are measure persevering. They represent the intrinsic symmetries of the conservation laws, in which role of the particles as well as their velocities before and after a collision are indistinguishable. We will implement these change of variables for the Boltzmann collision operator and an arbitrary $\phi(\xi)$, we get: 
\begin{multline*}
  \int_{{\rm I\!R}^n} Q(f,f)(x,\xi,t)\phi(\xi) \ d\xi =  \frac{1}{4}\int_{{\rm I\!R}^n}\int_{{\rm I\!R}^n}\int_{S^{n-1}}(f^{'} f^{'}_{*} - f f_{*}) \\ \times \big(\phi(\xi) + \phi(\xi_{*}) - \phi(\xi^{'})  - \phi(\xi^{'}_{*})\big)|{\bf n}.(\xi- \xi_{*})| \ d{\bf n} d\xi_* d\xi
\end{multline*}
Set $\phi(\xi)$ equal to either 1, $|\xi|^2$ or $\xi_{i}$ for $1\leq i \leq n$. It follows from the conservation laws that $\phi(\xi) + \phi(\xi_{*}) - \phi(\xi^{'})  - \phi(\xi^{'}_{*})=0$. We conclude that assumptions of Definition \ref{interaction} are satisfied by $Q$ and hence it is a mesoscopic interaction. The notation $Q(f,f)$ is to emphasize that the collision operator is quadratic. Obtaining a similar notation for $g$ as was used for $f$ leads to: 
\begin{align*}
    Q(f, g) = \frac{1}{2}\int_{{\rm I\!R}^n}\int_{S^{n-1}}(f^{'} g^{'}_{*}+g^{'} f^{'}_{*} - g f_{*}-f g_{*})|{\bf n}.(\xi- \xi_{*})| d{\bf n}d\xi_* 
\end{align*}
Note that the conservation laws imply $|(\xi-\xi_{*}).{\bf n}| = |(\xi^{'}-\xi^{'}_{*}).{\bf n}|$. It is common and useful to break this operator into the gain and loss terms, which are respectively defined as: 
\begin{align*}
     Q^+(f, g) &= \frac{1}{2}\int_{{\rm I\!R}^n}\int_{S^{n-1}}(f^{'} g^{'}_{*}+g^{'} f^{'}_{*})|{\bf n}.(\xi^{'}- \xi^{'}_{*})| \ d{\bf n}d\xi_* \\
      Q^{-}(f, g) &= \frac{1}{2}\int_{{\rm I\!R}^n}\int_{S^{n-1}}(f_{*}g +g_{*}f )|{\bf n}.(\xi- \xi_{*})| \ d{\bf n}d\xi_* 
\end{align*}
And finally we have: 
\begin{gather*}
      Q(f,g) = Q^{+}(f,g) - Q^{-}(f,g)
\end{gather*}
\newpage
\subsection{Scattering solutions}
By starting with an initial value $f_{0}(x,\xi)$ for the Boltzmann equation, we will create scattering solutions associated to this initial value. It is expected that the initial value has finite mass, energy and momentum as defined in (2.1). To accomplish this goal, we will first define our expectation from a frame of reference that acts as an envelope for the scattering solutions. Next we will show there exist natural observer's and scattering norms associated to any described suitable frame. We will use these norms to create unique scattering solutions to the Boltzmann equation starting from initial values within a specific margin of the frame of reference. At the end of the section we will provide an example of a scattering frame which demonstrates utility of the scattering theory developed in Section 3 in the case of the Boltzmann equation. 

\begin{definition}\label{scatteringframe}
We say a solution to the linear transport equation like $\lambda(x,\xi,t)$ is a {\it scattering frame} if the following two conditions are satisfied: 
\begin{equation}
\begin{aligned}
    \sup_{x,\xi}& \frac{1}{\lambda(x,\xi,0)}\int_{0}^{\infty} Q^{+}(\lambda, \lambda)(x+t\xi,\xi,t)dt \\&+
   \sup_{x,\xi} \frac{1}{\lambda(x,\xi,0)}\int_{0}^{\infty}Q^{-}(\lambda, \lambda)(x+t\xi,\xi,t)dt < \infty
\end{aligned}
\end{equation}
And that for any bounded subset of the spatial variable $D \subset {\rm I\!R}^n$:
\begin{equation}
\begin{aligned}
\lim_{z\rightarrow\infty}&\Big(\sup_{x,\xi}\frac{1}{\lambda(x,\xi,0)}\chi_{D}(x)\int_{z}^{\infty}Q^{+}(\lambda, \lambda)(x+t\xi,\xi,t)dt\\&+  \sup_{x,\xi}\frac{1}{\lambda(x,\xi,0)}\chi_{D}(x)\int_{z}^{\infty}Q^{-}(\lambda, \lambda)(x+t\xi,\xi,t)dt\Big) =0        
\end{aligned}
\end{equation}
We say $\lambda(x,\xi,t)$ is a {\it uniform scattering frame}, if we replace (4.2) above with the stronger condition below (without any reference to the bounded sets):
\begin{equation}
\begin{aligned}
\lim_{z\rightarrow\infty}&\Big(\sup_{x,\xi}\frac{1}{\lambda(x,\xi,0)}\int_{z}^{\infty}Q^{+}(\lambda, \lambda)(x+t\xi,\xi,t)dt\\&+  \sup_{x,\xi}\frac{1}{\lambda(x,\xi,0)}\int_{z}^{\infty}Q^{-}(\lambda, \lambda)(x+t\xi,\xi,t)dt\Big) =0        
\end{aligned}
\end{equation}
\end{definition}
\begin{remark}
 In order to interpret the definition above, consider the evolution of particles associated to the initial value $f_0(x,\xi)\leq\lambda(x,\xi,0)$ that are not interacting with one another, therefore particles move along their own trajectories without change. This evolution is described by $f_{0}(x-t\xi,\xi) \leq \lambda(x,\xi,t)$. In this case, any two particles can meet at most once, only under the conditions that their trajectories intersect and timing is ideal. Although these particles are moving with inertia and effects of the collisions are not being implemented in their motion, still one can measure the total amount of possible interaction and that is represented in (4.1). One could argue that in the non linear case, although particles are subject to dispersion, it is still possible for any two particles to meet more than once. This leads to the expectation that, starting from an identical initial value, the non linear case is subject to more collisions compared to the nonexistent collisions of the linear equation. Recall Definition \ref{scattering solution}, in which one expects integrability of interactions and proper decay along the characteristics, then prior to that one should expect a similar condition for the linear case $\lambda$. Therefore the linear equation can be used as a frame of reference for the non linear one. 
\end{remark}
\begin{definition}\label{isoradius}
For a scattering frame $\lambda(x,\xi,t)$ define the {\it isotropic radius} $\alpha$ as below:
\begin{multline*}
    0<\frac{1}{2\alpha}=\sup_{x,\xi} \frac{1}{\lambda(x,\xi,0)}\int_{0}^{\infty} Q^{+}(\lambda, \lambda)(x+t\xi,\xi,t)dt \\+
   \sup_{x,\xi} \frac{1}{\lambda(x,\xi,0)}\int_{0}^{\infty}Q^{-}(\lambda, \lambda)(x+t\xi,\xi,t)dt < \infty
\end{multline*}
\begin{remark} 
The $\alpha$ defined above quantifies the margin for a scattering frame. Recall the prior remark and the conclusion of Section 3. Use of the term isotropic has a strong root in euclidean geometry where it represents uniform scaling, yet it also appears in fluid mechanics, kinetic theory of gases and quantum mechanics. This geometric interpretation is key to understanding isotropy within these different fields. A scattering solution starting within the isotropic radius of a frame will remain within the same frame along its evolution. In the next theorem, we will show how existence of a scattering frame will lead to existence of global in time scattering solutions. 
\end{remark}
\end{definition}
\indent Recall definitions of the observer's norm, scattering norm and scattering solution from Section 3. In the next theorem and under the assumption of having a scattering frame, we will show existence of scattering solutions to the Boltzmann equation following a similar strategy as described at the final discussion of Section 3. At the end of this section we will provide an example of a distribution that satisfies properties of a scattering frame. Any scattering frame like $\lambda(x, \xi, t)$ has a natural associated observer's and scattering norms as defined below:  
\begin{equation}
 \begin{aligned}
    \|f(x,\xi,t)\|_{O}&= \sup_{x,\xi} \frac{1}{\lambda(x,\xi,0)} f(x,\xi,t)  \\
    \|f(x,\xi,t)\|_{S}&=\sup_{x,\xi,t}\frac{1}{\lambda(x,\xi,t)}f(x,\xi,t) \\
\end{aligned}
\end{equation}
\indent The observer's norms is useful for the comparison of solutions to a frame of reference at a specific slice of time and the scattering norm is useful for comparison of solutions in a faraway future or a distant past. Looking back at Definition \ref{sospaces}, the aforementioned norms have their corresponding scattering space:
\begin{align*}
     f \in S(C)\Longleftrightarrow\|f(x,\xi, t)\|_{S} \leq C
\end{align*}

\begin{theorem}\label{bscatteringthm}
Assume $\lambda(x,\xi,t)$ is a scattering frame with an isotropic radius $\alpha$. For any positive $N < \alpha$ and initial value $f_{0}(x-t\xi,\xi)\in S(\frac{1}{2}N)$, there exists a unique scattering solution to the Boltzmann equation like $f \in S(N)$:
\begin{align*}
    \partial_t f(x,\xi,t) + \xi.\nabla_{x}f(x,\xi,t) &= Q(f, f)(x,\xi,t) \\
     f(x,\xi, 0)&=f_{0}(x,\xi)
\end{align*}
 This solution scatters to a unique solution of the linear transport equation $f_{\infty}(x, \xi, t)\in S(N)$ within any compact set of the spatial variable like $D \subset {\rm I\!R}^n$:
\begin{align*}
   \lim_{t \rightarrow \infty}\|\chi_D(x)\big(f(x+t\xi,\xi,t) - f_{\infty}(x,\xi, 0)\big)\|_{O}=0
 \end{align*}
Conversely, for any solution of the linear transport equation like $f_\infty \in S(\frac{1}{2}N)$, there exists a family of scattering solutions to the Boltzmann equation like $u_{z} \in S(N)$ for $z \in [0, \infty)$, scattering respectively to the unique solutions of linear transport equation $f^{z}_{\infty}(x,\xi,t) \in S(N)$:
\begin{align*}
\lim_{t\rightarrow\infty}\|\chi_{D}(x)(u_z(x+t\xi,\xi,t) - f_{\infty}^{z}(x,\xi,0))\|_{O}=0 
\end{align*}
The sequence $f_{\infty}^z$ converges to $f_{\infty}$ in the sense defined below, where $D \subset {\rm I\!R}^n$ is any compact subset of the spatial variable: 
\begin{align*}
\lim_{z\rightarrow\infty}\|\chi_{D}(x-t\xi)(f_{\infty}(x,\xi,t)-f^{z}_{\infty}(x,\xi,t))\|_{S}=0
\end{align*}
\end{theorem}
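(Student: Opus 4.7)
The plan is to realize the Boltzmann solution as the unique fixed point of the Duhamel map
\[\Phi(f)(x,\xi,t) = f_0(x-t\xi,\xi) + \int_0^t Q(f,f)(x-(t-s)\xi,\xi,s)\,ds\]
on the scattering ball $S(N)$ equipped with the (complete) metric induced by $\|\cdot\|_S$, and then to invoke Theorem \ref{scatteringthm} to extract the linear asymptote. The pointwise comparison that makes everything work is the monotonicity of $Q^{\pm}$: since the hard-sphere kernel $|{\bf n}\cdot(\xi-\xi_*)|$ is nonnegative, both the gain and loss operators are bilinear with positive kernel, so $|Q^{\pm}(f,g)|\leq Q^{\pm}(|f|,|g|)$ pointwise and this upper bound is monotone in its nonnegative arguments. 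Consequently, for any $f,g\in S(N)$ one has
\[|Q(f,g)(x,\xi,t)|\leq N^2\bigl(Q^+(\lambda,\lambda)+Q^-(\lambda,\lambda)\bigr)(x,\xi,t).\]

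Integrating this estimate along the characteristic $x+t\xi$ and using Definition \ref{isoradius} yields $\bigl\|\int_0^\infty|Q(f,f)(x+t\xi,\xi,t)|\,dt\bigr\|_O \leq N^2/(2\alpha)$, so $\|\Phi(f)\|_S \leq N/2 + N^2/(2\alpha) \leq N$ by the hypothesis $N<\alpha$; this is the self-map property on $S(N)$. Next, the bilinear identity $Q(f,f)-Q(g,g)=Q(f+g,f-g)$, together with $|f+g|\leq 2N\lambda$ and $|f-g|\leq\|f-g\|_S\,\lambda$, gives
\[\|\Phi(f)-\Phi(g)\|_S \leq \frac{N}{\alpha}\|f-g\|_S,\]
a strict contraction. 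Banach's theorem then delivers the unique $f\in S(N)$ solving the Boltzmann equation with initial datum $f_0$.

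For the scattering statement, the same pointwise majorization transfers the frame decay condition (4.2) to $|Q(f,f)|$, giving exactly condition (3.1) of Definition \ref{scattering solution}; hence $f$ is a scattering solution and Theorem \ref{scatteringthm} produces the unique $f_\infty\in S(N)$ with the claimed compact-set convergence. For the converse, fix $f_\infty\in S(N/2)$ and, for each $z\geq 0$, reapply the first part at starting time $z$ with initial condition $u_z(x,\xi,z)=f_\infty(x,\xi,z)$, which lies in $S(N/2)$ since $f_\infty$ does; this yields a scattering solution $u_z\in S(N)$ with a unique linear asymptote $f_\infty^z\in S(N)$. Evaluating Duhamel on the characteristic $x+t\xi$ and letting $t\to\infty$, and using that $f_\infty$ itself solves linear transport so that the free-streaming term collapses to $f_\infty(x,\xi,0)$, one obtains
\[f_\infty^z(x,\xi,0)-f_\infty(x,\xi,0)=\int_z^\infty Q(u_z,u_z)(x+s\xi,\xi,s)\,ds,\]
whose $\chi_D$-weighted observer's norm is dominated by $N^2$ times the tail in (4.2) and hence vanishes as $z\to\infty$. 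Because both $f_\infty$ and $f_\infty^z$ are linear transport solutions, the scattering norm $\|\chi_D(x-t\xi)(f_\infty-f_\infty^z)\|_S$ collapses to this observer's norm at time zero, yielding the stated convergence.

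The only technical point that requires real care is establishing the pointwise majorization $|Q(f,g)|\leq N^2(Q^+(\lambda,\lambda)+Q^-(\lambda,\lambda))$ for potentially signed arguments, since the gain and loss operators must be handled separately in order to preserve the sharp constants that make the self-map bound $N/2+N^2/(2\alpha)\leq N$ and the contraction constant $N/\alpha<1$ work in tandem. Once that bound is secured, the self-map and contraction claims reduce to integrating along characteristics and quoting the isotropic radius, while both the scattering asymptotics and the converse direction follow by bookkeeping with time-translated instances of the same fixed-point construction.
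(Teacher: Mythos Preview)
Your proposal is correct and follows essentially the same route as the paper: the pointwise majorization $|Q^{\pm}(f,g)|\le \|f\|_S\|g\|_S\,Q^{\pm}(\lambda,\lambda)$, the resulting characteristic estimate $\|\int_0^\infty|Q(f,g)(x+t\xi,\xi,t)|\,dt\|_O\le \frac{1}{2\alpha}\|f\|_S\|g\|_S$, the self-map and contraction bounds on $S(N)$, Banach's theorem, and the appeal to Theorem~\ref{scatteringthm} via condition~(4.2) all match the paper's argument. For the converse your treatment is slightly leaner than the paper's: you go directly to the identity $f_\infty^z(x,\xi,0)-f_\infty(x,\xi,0)=\int_z^\infty Q(u_z,u_z)(x+s\xi,\xi,s)\,ds$ and bound its $\chi_D$-weighted observer norm by the tail in~(4.2), whereas the paper first runs an intermediate Cauchy argument on $\chi_D(x-t\xi)f_\infty^z$ before identifying the limit via that same identity---your shortcut is valid and loses nothing.
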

\begin{proof}
\noindent 
The scattering frame $\lambda(x, \xi, t)$ has its associated observer's and scattering norms as defined in (4.4). With the aid of these norms we estimate the impacts of the gain and loss terms of the collision operator along the characteristics of the Boltzmann equation. Start with the following observation for the gain term and arbitrary $f,g \in S(N)$: 
\begin{multline*}
    \frac{1}{2}\sup_{x,\xi} \frac{1}{\lambda(x,\xi,0)}\int_{0}^{\infty}   Q^{+}(f,g)(x+t\xi,\xi,t)dt \\ =  \frac{1}{2}\sup_{x,\xi} \frac{1}{\lambda(x,\xi,0)}\int_{0}^{\infty}  \int_{S^{n-1}}\int_{{\rm I\!R}^n}  \Big(f(x + t\xi, \xi^{'},t)g(x + t\xi, \xi_{*}^{'},t)\\ +g(x + t\xi, \xi^{'},t)f(x + t\xi, \xi_{*}^{'},t)\Big)|(\xi^{'} - \xi^{'}_{*}).{\bf n}| \ d\xi_{*} d{\bf n}dt
\end{multline*}
Therefore: 
\begin{multline*}
      \frac{1}{2}\sup_{x,\xi} \frac{1}{\lambda(x,\xi,0)}\int_{0}^{\infty}   Q^{+}(f,g)(x+t\xi,\xi,t)dt \\ \leq
    \|f\|_{S}\|g\|_{S} \sup_{x,\xi} \frac{1}{\lambda(x,\xi,0)}\int_{0}^{\infty}  \int_{S^{N-1}}\int_{{\rm I\!R}^n} \lambda(x + t\xi, \xi^{'},t) \\ \times \lambda(x + t\xi, \xi_{*}^{'},t)|(\xi^{'} - \xi^{'}_{*}).{\bf n}| \ d\xi_{*} d{\bf n}dt
\end{multline*}
\noindent Continue by estimating the loss term similarly: 
\begin{multline*}
    \frac{1}{2}\sup_{x,\xi} \frac{1}{\lambda(x,\xi,0)}\int_{0}^{\infty} Q^{-}(f,g)(x+t\xi,\xi,t)dt \\ \leq  
    \|f\|_{S}\|g\|_{S} \sup_{x,\xi} \frac{1}{\lambda(x,\xi,0)}\int_{0}^{\infty}  \int_{S^{N-1}}\int_{{\rm I\!R}^n} \lambda(x + s\xi, \xi,t) \\ \times \lambda(x + t\xi, \xi_{*},t)|(\xi - \xi_{*}).{\bf n}|  \ d\xi_{*} d{\bf n}dt
\end{multline*}
After combining the two estimates above for gain and loss terms, we will conclude the bound  below in terms of the isotropic radius(Definition \ref{isoradius}) of scattering frame $\lambda$: 
\begin{align}
    \|\int_{0}^{\infty}|Q(f,g)(x+t\xi,\xi,t)|dt\|_{O} \leq \frac{1}{2\alpha}\|f\|_{S}\|g\|_{S}
\end{align}
Now consider the solution map defined below for $f\in S(N)$:
\begin{gather*}
 \Phi(f)(x,\xi,t) = f_{0}(x-t\xi, \xi) + \int_{0}^{t} Q(f,f)(x-t\xi+s\xi,\xi,s) ds  \end{gather*}
We will estimate the scattering norm of $\Phi(f)$ using (4.5): 
\begin{multline*}
    \|\Phi(f)(x,\xi, t)\|_{S} =  
    \sup_{t}\|f_{0}(x, \xi) + \int_{0}^{t} Q(f, f)(x + \xi s, \xi, s) ds\|_{O} \\ \leq
    \|f_{0}(x,\xi)\|_{O}  +  \|\int_{0}^{\infty}|Q(f,f)(x+s\xi,\xi,s)|ds\|_{O}  \leq \|f_{0}\|_{O} +\frac{1}{2\alpha}\|f\|_{S}^{2} < N
\end{multline*}
The last inequality is true because $\|f_{0}(x, \xi)\|_{O}=\|f_{0}(x-t\xi,\xi)\|_{S}\leq \frac{1}{2}N$  and $f \in S(N)$, while $N < \alpha$. This computation  implies that the iterations of solution map remain within the same scattering space:  
\begin{align*}
   f \in S(N) \Longrightarrow \Phi(f) \in S(N)
\end{align*}
\noindent Furthermore since $Q(f,f)$ is quadratic, for $u_1, u_2 \in S(N)$ we have:
\begin{multline*}
    \|\Phi(u_{1})(x, \xi, t)-\Phi(u_{2})(x, \xi, t)\|_{S} \\= \|\int_{0}^{t}Q(u_{1},u_{1})(x-t\xi+z\xi,\xi,z)-Q(u_{2},u_{2})(x-t\xi+z\xi,\xi,z)dz\|_{S}\\ =
    \sup_{t}\|\int_{0}^{t}Q(u_1 - u_2,u_1+u_2)(x+z\xi,\xi, z)dz\|_{O}
\end{multline*}
Consequently: 
\begin{multline*}
     \|\Phi(u_{1})-\Phi(u_{2})\|_{S} \leq \|\int_{0}^{\infty}|Q(u_1 - u_2,u_1+u_2)(x+z\xi,\xi, z)|dz\|_{O} \\ \leq  \frac{1}{2\alpha} \|u_1-u_2\|_{S}\|u_1+u_2\|_{S} \leq \frac{N}{\alpha}\|u_{1}-u_{2}\|_{S}
\end{multline*}
We used estimate (4.5) in the previous computation. It follows that:
\begin{align}
\|\Phi(u_{1})-\Phi(u_{2})\|_{S} \leq \frac{N}{\alpha}\|u_{1}-u_{2}\|_{S}    
\end{align} 
\noindent Previously we showed that the iterations of solution map remains within the same scattering space $S(N)$. Using estimate (4.6) and noting that $N < \alpha$, we conclude $\Phi$  is a contraction on $S(N)$. Therefore, the Banach fixed point theorem implies existence of a unique fixed point for the solution map $\Phi$ like $f(x, \xi, t) \in S(N)$, which is a solution of the Boltzmann equation for the initial value $f_{0}(x,\xi)$. Because scattering frame $\lambda(x, \xi, t)$ satisfies (4.1) and (4.2), we conclude that $f$ satisfies the conditions of Definition \ref{scattering solution}, hence it is a scattering solution. By invoking Theorem \ref{scatteringthm} we deduce that $f$ scatters to a unique linear state like $f_{\infty} \in S(N)$ and complete the proof of the first part of theorem. Equivalently for any compact subset like $D \subset {\rm I\!R}^n$ we have: 
\begin{align*}
   \lim_{t \rightarrow \infty}\|\chi_D(x)\big(f(x+t\xi,\xi,t) - f_{\infty}(x,\xi, 0)\big)\|_{O}=0
 \end{align*}
\begin{figure}[t]
    \centering
\tikzset{every picture/.style={line width=0.75pt}} 

\tikzset{every picture/.style={line width=0.75pt}} 

\begin{tikzpicture}[x=0.75pt,y=0.75pt,yscale=-1,xscale=1]

\draw  [dash pattern={on 0.84pt off 2.51pt}]  (267.5,115.17) -- (267.69,155.44) ;
\draw [shift={(267.69,155.44)}, rotate = 269.73] [color={rgb, 255:red, 0; green, 0; blue, 0 }  ][line width=0.75]      (2.24,-2.24) .. controls (1,-2.24) and (0,-1.23) .. (0,0) .. controls (0,1.23) and (1,2.24) .. (2.24,2.24) ;
\draw [shift={(267.5,115.17)}, rotate = 89.73] [color={rgb, 255:red, 0; green, 0; blue, 0 }  ][line width=0.75]      (2.24,-2.24) .. controls (1,-2.24) and (0,-1.23) .. (0,0) .. controls (0,1.23) and (1,2.24) .. (2.24,2.24) ;
\draw  [dash pattern={on 0.84pt off 2.51pt}]  (225.54,171.5) -- (225.4,233) ;
\draw  [dash pattern={on 0.84pt off 2.51pt}]  (267.69,160.19) -- (268.09,236.19) ;
\draw    (132.92,67) -- (132.5,236) ;
\draw    (132.5,195.75) -- (306.33,150.33) ;
\draw [line width=0.75]    (133,236) -- (330,236) ;
\draw [shift={(333,236)}, rotate = 180] [fill={rgb, 255:red, 0; green, 0; blue, 0 }  ][line width=0.08]  [draw opacity=0] (8.93,-4.29) -- (0,0) -- (8.93,4.29) -- cycle    ;
\draw  [fill={rgb, 255:red, 255; green, 255; blue, 255 }  ,fill opacity=1 ] (129.69,195.75) .. controls (129.69,194.2) and (130.95,192.94) .. (132.5,192.94) .. controls (134.05,192.94) and (135.31,194.2) .. (135.31,195.75) .. controls (135.31,197.3) and (134.05,198.56) .. (132.5,198.56) .. controls (130.95,198.56) and (129.69,197.3) .. (129.69,195.75) -- cycle ;
\draw    (183,138.6) .. controls (209.1,131.2) and (201,176.2) .. (225.54,171) .. controls (249,162.6) and (247,113.8) .. (267.5,111.67) .. controls (280.6,104.2) and (283.67,125) .. (306.33,111.67) ;
\draw    (184.2,88.2) .. controls (214.2,84.2) and (244.77,166.58) .. (267.69,160.19) .. controls (290.6,153.8) and (279.4,121.93) .. (301.4,124.6) ;
\draw  [fill={rgb, 255:red, 255; green, 255; blue, 255 }  ,fill opacity=1 ] (264.88,160.19) .. controls (264.88,158.63) and (266.13,157.38) .. (267.69,157.38) .. controls (269.24,157.38) and (270.5,158.63) .. (270.5,160.19) .. controls (270.5,161.74) and (269.24,163) .. (267.69,163) .. controls (266.13,163) and (264.88,161.74) .. (264.88,160.19) -- cycle ;
\draw  [fill={rgb, 255:red, 255; green, 255; blue, 255 }  ,fill opacity=1 ] (222.73,171) .. controls (222.73,169.45) and (223.99,168.19) .. (225.54,168.19) .. controls (227.1,168.19) and (228.36,169.45) .. (228.36,171) .. controls (228.36,172.55) and (227.1,173.81) .. (225.54,173.81) .. controls (223.99,173.81) and (222.73,172.55) .. (222.73,171) -- cycle ;
\draw  [fill={rgb, 255:red, 255; green, 255; blue, 255 }  ,fill opacity=1 ] (222.88,236) .. controls (222.88,234.45) and (224.13,233.19) .. (225.69,233.19) .. controls (227.24,233.19) and (228.5,234.45) .. (228.5,236) .. controls (228.5,237.55) and (227.24,238.81) .. (225.69,238.81) .. controls (224.13,238.81) and (222.88,237.55) .. (222.88,236) -- cycle ;
\draw  [fill={rgb, 255:red, 255; green, 255; blue, 255 }  ,fill opacity=1 ] (265.27,236.19) .. controls (265.27,234.63) and (266.53,233.38) .. (268.09,233.38) .. controls (269.64,233.38) and (270.9,234.63) .. (270.9,236.19) .. controls (270.9,237.74) and (269.64,239) .. (268.09,239) .. controls (266.53,239) and (265.27,237.74) .. (265.27,236.19) -- cycle ;
\draw    (132.92,111.64) .. controls (154.1,166.19) and (187,217) .. (215,217.17) .. controls (243,217.33) and (247.8,161.8) .. (281,176.2) ;
\draw  [dash pattern={on 4.5pt off 4.5pt}]  (281,176.2) .. controls (296.2,181.4) and (298.6,155) .. (326.2,150.6) ;
\draw  [fill={rgb, 255:red, 255; green, 255; blue, 255 }  ,fill opacity=1 ] (130.1,111.64) .. controls (130.1,110.08) and (131.36,108.82) .. (132.92,108.82) .. controls (134.47,108.82) and (135.73,110.08) .. (135.73,111.64) .. controls (135.73,113.19) and (134.47,114.45) .. (132.92,114.45) .. controls (131.36,114.45) and (130.1,113.19) .. (130.1,111.64) -- cycle ;
\draw  [fill={rgb, 255:red, 255; green, 255; blue, 255 }  ,fill opacity=1 ] (264.69,110.7) .. controls (264.69,109.15) and (265.95,107.89) .. (267.5,107.89) .. controls (269.05,107.89) and (270.31,109.15) .. (270.31,110.7) .. controls (270.31,112.25) and (269.05,113.51) .. (267.5,113.51) .. controls (265.95,113.51) and (264.69,112.25) .. (264.69,110.7) -- cycle ;
\draw  [dash pattern={on 4.5pt off 4.5pt}]  (306.33,111.67) .. controls (325.83,99.67) and (317.67,113) .. (335.67,107) ;
\draw  [dash pattern={on 4.5pt off 4.5pt}]  (301.4,124.6) .. controls (313,126.33) and (317.67,128.56) .. (335.67,122.56) ;
\draw  [dash pattern={on 4.5pt off 4.5pt}]  (132.67,178.75) -- (339.67,125) ;
\draw  [dash pattern={on 4.5pt off 4.5pt}]  (133.33,163.75) -- (340.33,110) ;
\draw  [dash pattern={on 4.5pt off 4.5pt}]  (306.33,150.33) -- (339.5,142) ;
\draw    (246.6,114.2) -- (265.3,133.69) ;
\draw [shift={(266.69,135.13)}, rotate = 226.18] [color={rgb, 255:red, 0; green, 0; blue, 0 }  ][line width=0.75]    (4.37,-1.32) .. controls (2.78,-0.56) and (1.32,-0.12) .. (0,0) .. controls (1.32,0.12) and (2.78,0.56) .. (4.37,1.32)   ;
\draw  [dash pattern={on 4.5pt off 4.5pt}]  (154.87,92.87) .. controls (165.8,97) and (165,103.8) .. (184.2,88.2) ;
\draw  [dash pattern={on 4.5pt off 4.5pt}]  (162.2,142.6) .. controls (171.4,143.8) and (177.8,142.2) .. (183,138.6) ;
\draw  [fill={rgb, 255:red, 255; green, 255; blue, 255 }  ,fill opacity=1 ] (130.04,178.75) .. controls (130.04,177.2) and (131.3,175.94) .. (132.85,175.94) .. controls (134.41,175.94) and (135.67,177.2) .. (135.67,178.75) .. controls (135.67,180.3) and (134.41,181.56) .. (132.85,181.56) .. controls (131.3,181.56) and (130.04,180.3) .. (130.04,178.75) -- cycle ;
\draw  [fill={rgb, 255:red, 255; green, 255; blue, 255 }  ,fill opacity=1 ] (130.1,164.11) .. controls (130.1,162.56) and (131.36,161.3) .. (132.92,161.3) .. controls (134.47,161.3) and (135.73,162.56) .. (135.73,164.11) .. controls (135.73,165.66) and (134.47,166.92) .. (132.92,166.92) .. controls (131.36,166.92) and (130.1,165.66) .. (130.1,164.11) -- cycle ;

\draw (104.3,187.8) node [anchor=north west][inner sep=0.75pt]  [font=\normalsize]  {$f_{\infty }$};
\draw (217.9,240) node [anchor=north west][inner sep=0.75pt]  [font=\normalsize]  {$z_{1}$};
\draw (104.92,100) node [anchor=north west][inner sep=0.75pt]  [font=\normalsize]  {${u_{\infty }}$};
\draw (179.39,113.7) node [anchor=north west][inner sep=0.75pt]  [font=\normalsize]  {${u_{z}}_{1}$};
\draw (197.29,73) node [anchor=north west][inner sep=0.75pt]  [font=\normalsize]  {${u_{z}}_{2}$};
\draw (237.49,103) node [anchor=north west][inner sep=0.75pt]  [font=\normalsize]  {$\epsilon $};
\draw (259.7,240) node [anchor=north west][inner sep=0.75pt]  [font=\normalsize]  {$z_{2}$};
\draw (104.92,164.2) node [anchor=north west][inner sep=0.75pt]  [font=\normalsize]  {$f^{z_{2}}_{\infty }$};
\draw (104.92,142.8) node [anchor=north west][inner sep=0.75pt]  [font=\normalsize]  {$f^{z_{1}}_{\infty }$};

\end{tikzpicture}
\begin{adjustwidth}{65pt}{65pt}
\caption{\small $u_{z_1}$ and $u_{z_2}$ are two scattering solutions of the Boltzmann equation, that coincide with $f_{\infty}$ respectively at times $z_1$ and $z_2$. Consider that existence of $u_{\infty}$ is not guaranteed in general.}
\end{adjustwidth}
\end{figure}
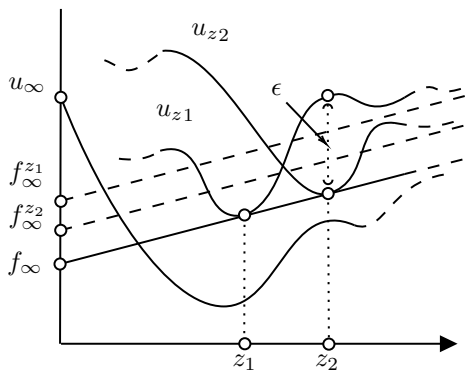

\indent For the converse, assume $f_{\infty}(x,\xi,t) \in S(\frac{1}{2}N)$ is an arbitrary solution to the linear transport equation. Define a family of solution maps for $0 \leq z$ as:
\begin{align*}
    \Phi_{z}(g)(x,\xi,t) =  f_{\infty}(x-(t-z)\xi, \xi,z) +  \int_{z}^{t} Q(g,g)(x-t\xi+s\xi,\xi,s) ds
\end{align*}
Since $f_{\infty} \in S(\frac{1}{2}N)$, using the first part of the proof we conclude that iterations of these  maps converge to a unique scattering solutions of the Boltzmann equation like $u_{z} \in S(N)$. Equivalently we have: 
\begin{align*}
    \lim_{n\rightarrow\infty}\Phi^{n}_{z}&=u_{z}(x,\xi,t) \\ 
    u_{z}(x,\xi,z)&=f_{\infty}(x,\xi,z)
\end{align*}
Theorem \ref{scatteringthm} implies that $u_{z}$ scatters to a unique solution of the linear transport equation like $f_{\infty}^{z} \in S(N)$ within any compact set $D$:
\begin{align}
    \lim_{t\rightarrow\infty}\|\chi_{D}(x)(f^{z}_{\infty}(x,\xi,0) - u_{z}(x+t\xi,\xi,t))\|_{O}=0
\end{align}
We will show that the sequence $\chi_{D}(x-t\xi)f_{\infty}^{z}$ is Cauchy in the scattering space $S(N)$ converging to $f_{\infty}$ with the aid of the forthcoming approximations. Consider Figure 5 and let $\epsilon$ be defined as: 
\begin{align*}
  \epsilon = \| \chi_{D}(x)(u_{z_1}(x+z_2\xi,\xi,z_2) - f_{\infty}(x+z_2\xi,\xi,z_2))\|_{O}  
\end{align*}
For $0\leq z_1 \leq z_2$ we get: 
\begin{multline*}
   \epsilon=\| \chi_{D}(x)\Big(u_{z_1}(x+z_1\xi,\xi,z_1)+ \int_{z_1}^{z_2}Q(u_{z_1},u_{z_1})(x+z\xi,\xi,z)dz\\-f_{\infty}(x+z_1\xi,\xi,z_1)\Big)\|_{O} = \| \chi_{D}(x)\int_{z_1}^{z_2}Q(u_{z_1},u_{z_1})(x+z\xi,\xi,z)dz\|_{O}
\end{multline*}
Consequently: 
\begin{align}
    \epsilon = \| \chi_{D}(x)\int_{z_1}^{z_2}|Q(u_{z_1},u_{z_1})(x+z\xi,\xi,z)| dz\|_{O}
\end{align}

\noindent In addition for $z_1\leq z_2 \leq t$ we have:
\begin{multline*}
    \|\chi_{D}(x)(u_{z_{1}}(x+t\xi,\xi,t) - u_{z_{2}}(x+t\xi,\xi,t))\|_{O}\\ =  \|\chi_{D}(x)\Big(u_{z_1}(x+z_2\xi,\xi,z_2)+\int_{z_2}^{t}Q(u_{z_1}, u_{z_1})(x+z\xi,\xi,z)\\- f_{\infty}(x+z_2\xi,\xi,z_2)-\int_{z_2}^{t}Q(u_{z_2}, u_{z_2})(x+z\xi,\xi,z)dz\Big)\|_{O} 
\end{multline*}
Therefore we get: 
\begin{equation}
\begin{aligned}
    \|\chi_{D}(x)&(u_{z_{1}}(x+t\xi,\xi,t) - u_{z_{2}}(x+t\xi,\xi,t))\|_{O} \\ &\leq \epsilon +
    \|\chi_{D}(x)\int_{z_2}^{t}Q(u_{z_1} - u_{z_1}, u_{z_2} + u_{z_2})(x+z\xi,\xi,z)dz\|_{O}
\end{aligned}
\end{equation}
Now continue with the following computation for $0\leq z_1\leq z_2$, where we used the triangle inequality: 
\begin{multline*}
 \|\chi_{D}(x)(f_{\infty}^{z_1}(x,\xi,0) - f_{\infty}^{z_2}(x,\xi,0))\|_{O} \leq
   \|\chi_{D}(x)(f^{z_1}_{\infty}(x,\xi,0) - u_{z_1}(x+t\xi,\xi,t))\|_{O}\\ + \|\chi_{D}(x)(f^{z_2}_{\infty}(x,\xi,0) - u_{z_2}(x+t\xi,\xi,t))\|_{O} \\ + \|\chi_{D}(x)(u_{z_{1}}(x+t\xi,\xi,t) - u_{z_{2}}(x+t\xi,\xi,t))\|_{O}
\end{multline*}
The left hand side of the previous inequality is independent of $t$. We will compute the limit of the right hand side with respect to $t$ and continue with estimates (4.7) and (4.9), it follows that: 
\begin{multline*}
    \|\chi_{D}(x)(f_{\infty}^{z_1}(x,\xi,0) - f_{\infty}^{z_2}(x,\xi,0))\|_{O}\leq \\  \lim_{t\rightarrow\infty} \|\chi_{D}(x)(u_{z_{1}}(x+t\xi,\xi,t) - u_{z_{2}}(x+t\xi,\xi,t))\|_{O}\leq \\ 
   \epsilon + \|(\chi_{D}(x)\int_{z_2}^{\infty}|Q(u_{z_1} - u_{z_1}, u_{z_2} + u_{z_2})(x+z\xi,\xi,z)|dz\|_{O}
\end{multline*}
Because $u_{z_1}$ and $u_{z_2}$ are scattering solutions in space $S(N)$, we deduce from the inequality above and (4.8) that $\|\chi_{D}(x)(f_{\infty}^{z_1}(x,\xi,0) - f_{\infty}^{z_2}(x,\xi,0))\|_{O}$ goes to zero as $z_1$ and $z_2$ approach the infinity. On the other hand because the family of functions $f_{\infty}^{z}$ are solutions to the linear transport equation we know: 
\begin{multline*}
    \|\chi_{D}(x-t\xi)(f_{\infty}^{z_1}(x,\xi,t) - f_{\infty}^{z_2}(x,\xi,t))\|_{S}= \|\chi_{D}(x)(f_{\infty}^{z_1}(x,\xi,0) - f_{\infty}^{z_2}(x,\xi,0))\|_{O}
\end{multline*}
This implies that $\|\chi_{D}(x-t\xi)f_{\infty}^{z_1}(x,\xi,t) - \chi_{D}(x-t\xi)f_{\infty}^{z_2}(x,\xi,t)\|_{S}$ converges to zero as $z_1$ and $z_2$ go to infinity, therefore $\chi_{D}(x-t\xi)f_{\infty}^{z}$ is a Cauchy sequence converging to another linear state. To complete the proof of we will show that this linear state is in fact $f_{\infty}$. For any compact $D \subset {\rm I\!R}^n$ we have: 
\begin{multline*}
      \lim_{t\rightarrow\infty}\|\chi_{D}(x)(f^{z}_{\infty}(x,\xi,0) - u_{z}(x+t\xi,\xi,t))\|_{O}\\=  \|\chi_{D}(x)\big(f^{z}_{\infty}(x,\xi,0) - u_{z}(x+z\xi,\xi,z)- \int_{z}^{\infty}Q(u_z,u_z)(x+r\xi,\xi,r)dr \big)\|_{O}=0
\end{multline*}
Consequently: 
\begin{align*}
    \chi_{D}(x)f^{z}_{\infty}(x,\xi,0) - \chi_{D}(x)f_{\infty}(x,\xi,0) = \chi_{D}(x)\int_{z}^{\infty}Q(u_z,u_z)(x+r\xi,\xi,r)dr
\end{align*}
Because $u_z$ is a scattering solution, from (3.1) we conclude that the right hand side of the expression above goes to zero as $z$ goes to infinity. This shows that the limit of the previously described Cauchy sequence can not be anything beside $\chi_{D}(x-t\xi)f_\infty(x,\xi,t)$ and complete the proof:
\begin{align*}
\lim_{z\rightarrow\infty}\|\chi_{D}(x-t\xi)(f_{\infty}(x,\xi,t)-f^{z}_{\infty}(x,\xi,t))\|_{S}=0
\end{align*}
\end{proof}
\indent Under the assumption of having a scattering frame, the previous theorem guarantees existence of a class of scattering solutions which exhibit asymptotic stability and completeness. At the end of this section, by providing an example of a scattering frame, this theorem will gain utility. Prior to that, we will investigate the impact of replacing the assumption on existence of a scattering frame in Theorem \ref{bscatteringthm} with a uniform scattering frame.

\begin{corollary}\label{ubscatteringthm} 
Assume $\lambda(x,\xi,t)$ is a uniform scattering frame with isotropic radius $\alpha$. For any positive $N <\alpha$ and initial value $f_{0}(x-t\xi,\xi)\in S(\frac{1}{2}N)$, there exists a unique uniform scattering solution to the Boltzmann equation like $f \in S(N)$ such that scatters uniformly to a unique solution of the linear transport equation like $f_{\infty} \in S(N)$:
\begin{align*}
   \lim_{t \rightarrow \infty}\|f(x+t\xi,\xi,t) - f_{\infty}(x,\xi, 0)\big\|_{O}=0
 \end{align*}
Conversely, for any solution of the linear transport equation like $f_\infty \in S(\frac{1}{2}N)$, there exists a unique uniform scattering solution to the Boltzmann equation like $u_{\infty} \in S(N)$, such that $u_{\infty}$ scatters uniformly to $f_{\infty}$:
\begin{align*}
    \lim_{t\rightarrow\infty}\|u_{\infty}(x+t\xi,\xi,t) - f_{\infty}(x,\xi,0)\|_{O}=0
\end{align*}

\begin{proof}
We will replace the assumption on existence of a scattering frame in the proof of the previous theorem with a uniform scattering frame. Replicating the first part of the proof with the aforementioned stronger assumption, and with the aid of Corollary \ref{uscatteringc} we conclude the existence of uniform scattering solutions. For the converse, assume that the family of solutions $u_z$ is as defined in the proof of Theorem \ref{bscatteringthm}. We will show that $u_{z}$ is a Cauchy sequence in scattering space $S(N)$ converging to a unique uniform scattering solution of the Boltzmann equation $u_{\infty} \in S(N)$.
Start with the computation below: 
\begin{multline*}
     \|u_{z_1} - u_{z_2}\|_{S} = \sup_{t}\| \Big(u_{z_1}(x+z_2\xi,\xi,z_2)+\int_{z_2}^{t} Q(u_{z_1},u_{z_1})(x+z\xi,\xi,z) \ dz\\ - f_{\infty}(x+z_2 \xi, \xi, z_2)-\int_{z_2}^{t} Q(u_{z_2},u_{z_2})(x+z\xi,\xi,z) \ dz\Big)\|_{O}
\end{multline*}
Consider figure 5 and let $\epsilon$ be as below: 
\begin{align*}
    \epsilon=\|(u_{z_1}(x+z_2\xi,\xi,z_2) - f_{\infty}(x+z_2 \xi, \xi, z_2)\|_{O}
\end{align*}
Therefore: 
\begin{multline*}`
     \|u_{z_1} - u_{z_2}\|_{S} \leq \epsilon + \|\int_{0}^{\infty}|Q(u_{z_1}-u_{z_2},u_{z_1}+u_{z_2})(x+z\xi,\xi,z)|dz\|_{O} \\ \leq  \epsilon + \frac{N}{\alpha}\|u_{z_1} - u_{z_2}\|_{S}
\end{multline*}
We conclude that: 
\begin{align}
(1-\frac{N}{\alpha}) \|u_{z_1} -  u_{z_2}\|_{S} \leq \epsilon
\end{align}
Now continue with a computation for $\epsilon$:
\begin{multline*}
  \epsilon =  \|u_{z_1}(x+z_1\xi,\xi,z_1)+ \int_{z_1}^{z_2}Q(u_{z_1},u_{z_1})(x+z\xi,\xi,z) \ dz-f_{\infty}(x+z_1\xi,\xi,z_1)\|_{O}\\ = \| \int_{z_1}^{z_2}Q(u_{z_1},u_{z_1})(x+z\xi,\xi,z) \ dz\|_{O}
\end{multline*}
Since $u_{z}$s are uniform scattering solutions, it is immediate that as $z_{1}$ and $z_{2}$ go to infinity $\epsilon$ goes to zero. From the previous statement and estimate (4.10) we conclude that $u_{z}$ is a Cauchy sequence in $S(N)$ converging to another uniform scattering solution of the Boltzmann equation like $u_{\infty}$. Corollary \ref{uscatteringc} states that this solution scatters uniformly to a unique solution of the linear transport equation.
\end{proof}
\end{corollary}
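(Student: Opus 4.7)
The plan is to rerun the proof of Theorem \ref{bscatteringthm} verbatim, but everywhere replacing the cutoff $\chi_D(x)$ by $1$ and every appeal to Theorem \ref{scatteringthm} by an appeal to Corollary \ref{uscatteringc}. The uniform scattering frame hypothesis (4.3), which is the only genuinely new input, is precisely the upgrade needed to legitimize this replacement: it promotes every estimate that previously held only within a compact $D \subset {\rm I\!R}^n$ to a global one in the observer's norm.

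For the existence half, I would observe that the crucial contraction estimate (4.5) depends only on (4.1), so it survives unchanged. The same solution map $\Phi$ therefore maps $S(N)$ into itself (because $\|f_0\|_O + \tfrac{1}{2\alpha} N^2 < N$ whenever $\|f_0\|_O \le N/2$ and $N<\alpha$) and is a strict contraction with Lipschitz constant $N/\alpha < 1$. Banach's fixed point theorem produces a unique $f\in S(N)$ solving the Boltzmann equation with initial data $f_0$. Applying (4.5) to the tail integral and invoking (4.3) in place of (4.2) upgrades (3.1) to the uniform condition (3.2), so $f$ is in fact a uniform scattering solution, and Corollary \ref{uscatteringc} yields the unique $f_\infty \in S(N)$.

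For the converse, I would introduce the same family $u_z\in S(N)$ obtained as the unique fixed point of
\begin{align*}
\Phi_z(g)(x,\xi,t) = f_\infty(x-(t-z)\xi,\xi,z) + \int_z^t Q(g,g)(x-t\xi+s\xi,\xi,s)\, ds,
\end{align*}
so that $u_z(x,\xi,z) = f_\infty(x,\xi,z)$. The key new task is to show that $\{u_z\}_{z\ge 0}$ is itself Cauchy in $S(N)$, rather than only after multiplication by $\chi_D$. Using the coincidence condition above, for $0\le z_1\le z_2\le t$ I would write
\begin{align*}
u_{z_1}(x+t\xi,\xi,t) - u_{z_2}(x+t\xi,\xi,t) &= \bigl(u_{z_1}(x+z_2\xi,\xi,z_2) - f_\infty(x+z_2\xi,\xi,z_2)\bigr) \\
&\quad + \int_{z_2}^t Q(u_{z_1}-u_{z_2},\, u_{z_1}+u_{z_2})(x+s\xi,\xi,s)\, ds.
\end{align*}
Taking the observer's norm, supping over $t$, and applying (4.5) to the integral gives
\begin{align*}
\bigl(1 - N/\alpha\bigr)\, \|u_{z_1} - u_{z_2}\|_S \le \epsilon(z_1,z_2),
\end{align*}
with
\begin{align*}
\epsilon(z_1,z_2) = \|u_{z_1}(x+z_2\xi,\xi,z_2) - f_\infty(x+z_2\xi,\xi,z_2)\|_O = \Bigl\|\int_{z_1}^{z_2} Q(u_{z_1},u_{z_1})(x+s\xi,\xi,s)\, ds\Bigr\|_O,
\end{align*}
the last equality coming from the integral representation of $u_{z_1}$. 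Since $u_{z_1}$ is a uniform scattering solution, the right side tends to $0$ as $z_1,z_2\to\infty$, so $\{u_z\}$ converges in $S(N)$ to some $u_\infty$, which is again a uniform scattering solution. A final application of Corollary \ref{uscatteringc} together with passing $z\to\infty$ in $u_z(x,\xi,z) = f_\infty(x,\xi,z)$ identifies the limiting linear state as $f_\infty$.

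The main obstacle is exactly this global Cauchy property of $\{u_z\}$. In Theorem \ref{bscatteringthm} the same arithmetic works only inside $\chi_D$ because condition (4.2) provides tail decay only on compact sets; here the stronger condition (4.3) propagates through the contraction bound (4.5) to ensure that $\epsilon(z_1,z_2)$ vanishes in the full observer's norm, which is precisely the input needed to close the Cauchy estimate uniformly in $x$ and $\xi$.
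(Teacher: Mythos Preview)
Your proposal is correct and follows essentially the same approach as the paper: both run the contraction argument of Theorem \ref{bscatteringthm} globally (dropping $\chi_D$), invoke Corollary \ref{uscatteringc} for the forward direction, and for the converse show $\{u_z\}$ is Cauchy in $S(N)$ via the decomposition at time $z_2$ and the absorption inequality $(1-N/\alpha)\|u_{z_1}-u_{z_2}\|_S \le \epsilon$. Your write-up additionally makes explicit the identification of the limiting linear state with $f_\infty$ (via $u_z(\cdot+z\xi,\xi,z)=f_\infty(\cdot,\xi,0)$ and $u_z\to u_\infty$ in $\|\cdot\|_S$), a point the paper leaves implicit.
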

We conclude by examining an example of a prominent set of distributions, often called Maxwellians and demonstrate that they can be used as a scattering frame for the Boltzmann equation. The following lemma will be used for this purpose. Let $\Omega$ and $\Omega_{D}(z)$ be defined as below, where $D \subset {\rm I\!R}^n$ and $\chi_{D}$ is the indicator function for this set as defined before. 
\begin{equation}
    \begin{aligned} &\Omega_D(z) =\sup_{x, \xi} \chi_{D}(x)\int_{z}^{\infty} \int_{{\rm I\!R}^n} e^{-|x + s\xi - s\xi_{*}|^2 - |\xi_{*}|^2} |\xi - \xi_*|d\xi_{*} ds \\ 
    &\Omega =\sup_{x, \xi}\int_{0}^{\infty} \int_{{\rm I\!R}^n} e^{-|x + s\xi - s\xi_{*}|^2 - |\xi_{*}|^2} |\xi - \xi_*|d\xi_{*} ds 
\end{aligned}
\end{equation}
\begin{lemma}\label{lem} Assume $D$ is an arbitrary bounded subset of the spatial variable and $V_{n}$ is  the volume of $n$ dimensional unit sphere. Let $R$ be any number greater or equal to the diameter of set $D \subset B(0, R) \subset {\rm I\!R}^n$. Then $\Omega_D(z)$ satisfies the  bound below for $z>1$ and $\Omega$ is convergent: 
\begin{align*}
    {\bf i.}& \ \Omega_{D}(z) \leq \int_{{\rm I\!R}^n }\int^{\infty}_{z/\log(z) - R} e^{-s^2-|\xi_{*}|^2} \ dsd\xi_{*}+V_{n}\frac{1}{\log^{n}(z)}\int_{-R}^{\infty}e^{-s^2}ds < \infty \\
    {\bf ii.}&\  \Omega < \infty 
\end{align*}

\begin{proof}
Start with the  following observation where ${\bf n}$ is an element of $n-1$ dimensional unit sphere:
\begin{multline*}
    \Omega_{D}(z)=\sup_{x, \xi} \chi_{D}(x) \int_{{\rm I\!R}^n} \int_{z}^{\infty}e^{-|x + s\xi - s\xi_{*}|^2 - |\xi_{*}|^2} |\xi - \xi_*|dsd\xi_{*} \\ =  \sup_{x, \xi} \chi_{D}(x) \int_{{\rm I\!R}^n}\int_{z|\xi-\xi_{*}|}^{\infty} e^{-|x + s{\bf n}|^2 - |\xi_{*}|^2}dsd\xi_{*}
\end{multline*}
Consequently: 
\begin{multline*}
    \Omega_{D}(z) \leq  \sup_{x, \xi} \chi_{D}(x) \int_{{\rm I\!R}^n}\int_{z|\xi-\xi_{*}|}^{\infty} e^{-|x|^2 - s^2 + 2|x|s - |\xi_{*}|^2}dsd\xi_{*} \\= \sup_{x, \xi}\chi_{D}(x)  \int_{{\rm I\!R}^n}\int_{z|\xi-\xi_{*}|}^{\infty} e^{-(|x|-s)^2 - |\xi_{*}|^2}dsd\xi_{*} \\= \sup_{x, \xi} \chi_{D}(x) \int_{{\rm I\!R}^n} \int_{z|\xi-\xi_{*}|-|x|}^{\infty}e^{-s^2 - |\xi_{*}|^2}dsd\xi_{*} 
\end{multline*}
Because $D \subset B(0,R) \subset {\rm I\!R}^n$, it follows from above that: 
\begin{align*}
    \Omega_{D}(z) \leq  \sup_{x, \xi} \int_{{\rm I\!R}^n}\int_{z|\xi-\xi_{*}| - R}^{\infty}  e^{-s^2 - |\xi_{*}|^2}dsd\xi_{*}
\end{align*}
Now assume that $\epsilon$ is an arbitrary positive number, the previous inequality implies: 
\begin{multline*}
     \Omega_{D}(z) \leq   \sup_{x, \xi}\big(\int_{{\rm I\!R}^n - B(\xi,\epsilon)}\int^{\infty}_{z\epsilon - R} e^{-s^2-|\xi_{*}|^2}dsd\xi_{*}+\int_{B(\xi,\epsilon)}\int_{-R}^{\infty}e^{-s^2-|\xi_{*}|^2}d\xi_{*}\big)  
\end{multline*}
We will substitute $\epsilon$ with $\dfrac{1}{\log(z)}$ under the assumption $1<z$. It follows:
\begin{align*}
    \Omega_{D}(z) \leq \int_{{\rm I\!R}^n }\int^{\infty}_{z/\log(z) - R} e^{-s^2-|\xi_{*}|^2}dsd\xi_{*}+V_{n}\frac{1}{\log^{n}(z)}\int_{-R}^{\infty}e^{-s^2}ds
\end{align*}
Continue by replicating a similar computation for $\Omega$:
\begin{multline*}
    \Omega \leq  \sup_{x, \xi} \int_{{\rm I\!R}^n}\int_{0}^{\infty} e^{-|x|^2 - s^2 + 2|x|s - |\xi_{*}|^2}dsd\xi_{*} \\= \sup_{x, \xi}\int_{{\rm I\!R}^n}\int_{0}^{\infty} e^{-(|x|-s)^2 - |\xi_{*}|^2}dsd\xi_{*} \\= \sup_{x, \xi} \int_{{\rm I\!R}^n} \int_{-|x|}^{\infty}e^{-s^2 - |\xi_{*}|^2}dsd\xi_{*} 
\end{multline*}
We deduce the following bound and complete the proof:
\begin{align*}
   \Omega <  \int_{-\infty}^{\infty} \int_{{\rm I\!R}^n} e^{-s^2 - |\xi_{*}|^2}d\xi_{*}  ds < \infty
\end{align*}
\end{proof}
\end{lemma}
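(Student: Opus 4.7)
The plan is to reduce both integrals to Gaussian form by changing variables along the direction of $\xi - \xi_*$, then to handle part (i) by an optimized splitting in $\xi_*$ and part (ii) by an almost immediate bound after completing the square. Throughout I will use that $x \in D \subset B(0,R)$, so $|x| \le R$ wherever $\chi_D(x)$ does not vanish.

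First I would rewrite the inner $s$-integral by using the substitution $s \mapsto s/|\xi - \xi_*|$ (so that $s(\xi - \xi_*)$ becomes $s\mathbf{n}$ where $\mathbf{n}$ is the unit vector in the direction of $\xi - \xi_*$). This absorbs the $|\xi - \xi_*|$ factor in the integrand and converts the lower limit $z$ into $z|\xi - \xi_*|$, giving
\begin{equation*}
\Omega_D(z) \le \sup_{x,\xi} \chi_D(x) \int_{\mathbb{R}^n} \int_{z|\xi - \xi_*|}^\infty e^{-|x + s\mathbf{n}|^2 - |\xi_*|^2} \, ds \, d\xi_*.
\end{equation*}
Expanding the square gives $|x + s\mathbf{n}|^2 \ge |x|^2 + s^2 - 2|x|s = (s - |x|)^2$, and a further shift $s \mapsto s + |x|$ together with the bound $|x| \le R$ on the support of $\chi_D$ produces
\begin{equation*}
\Omega_D(z) \le \sup_{x,\xi} \int_{\mathbb{R}^n} \int_{z|\xi - \xi_*| - R}^\infty e^{-s^2 - |\xi_*|^2} \, ds \, d\xi_*.
\end{equation*}

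The remaining difficulty is that the lower limit $z|\xi - \xi_*| - R$ is small precisely when $\xi_*$ is close to $\xi$, so the integrand there is not small. The key idea is to split the $\xi_*$ integral over $B(\xi, \epsilon)$ and its complement. On the complement, $|\xi - \xi_*| \ge \epsilon$, so the lower limit is at least $z\epsilon - R$ and the $s$-integral is small for large $z$; on the ball, we dominate the $s$-integral crudely by $\int_{-R}^\infty e^{-s^2}\,ds$ and control the $\xi_*$-integral by the volume $V_n \epsilon^n$. Optimizing, I would set $\epsilon = 1/\log(z)$: the first piece becomes $\int_{\mathbb{R}^n} \int_{z/\log(z) - R}^\infty e^{-s^2 - |\xi_*|^2} ds\, d\xi_*$, which vanishes as $z \to \infty$ because $z/\log(z) \to \infty$, and the second piece becomes $V_n \log^{-n}(z) \int_{-R}^\infty e^{-s^2} ds$, which also vanishes. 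This yields the stated bound for (i), and both terms are manifestly finite.

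For part (ii), the same change of variables and completion of the square produce
\begin{equation*}
\Omega \le \sup_{x,\xi} \int_{\mathbb{R}^n} \int_{-|x|}^\infty e^{-s^2 - |\xi_*|^2} \, ds \, d\xi_* \le \int_{\mathbb{R}} \int_{\mathbb{R}^n} e^{-s^2 - |\xi_*|^2} \, d\xi_* \, ds,
\end{equation*}
which is a product of standard Gaussian integrals and hence finite. The main subtlety of the whole argument is the choice of $\epsilon = 1/\log(z)$ in part (i); any power $z^{-\alpha}$ would fail to balance the two pieces simultaneously as $z \to \infty$, since one needs $z\epsilon \to \infty$ while $\epsilon^n \to 0$, and the logarithmic scaling is what makes both occur.
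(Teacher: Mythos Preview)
Your proof is correct and follows essentially the same route as the paper: the substitution $s \mapsto s|\xi-\xi_*|$, completion of the square, the shift and bound by $R$, the $\epsilon$-splitting of the $\xi_*$-integral, and the choice $\epsilon = 1/\log(z)$ all match the paper's argument step for step.

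One small correction to your closing remark: the claim that ``any power $z^{-\alpha}$ would fail'' is not right. For $\epsilon = z^{-\alpha}$ with $0 < \alpha < 1$ one has $z\epsilon = z^{1-\alpha} \to \infty$ and $\epsilon^n = z^{-n\alpha} \to 0$, so such powers would also work. The logarithmic choice is convenient but not forced; this does not affect the validity of your proof.
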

\begin{theorem} The distribution $\lambda(x,\xi,t)= e^{-|x -t\xi|^2-|\xi|^2}$ is a scattering frame with isotropic radius $\alpha$ satisfying  the  bound below, where $V_{n-1}$ is the volume of $n-1$ dimensional unit sphere and $\Omega$ is as (4.11).
\begin{align*}
     \dfrac{1}{4V_{n-1}\Omega}\leq \alpha. 
\end{align*}
\end{theorem}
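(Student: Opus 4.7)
The plan is to verify each clause of Definition \ref{scatteringframe} by direct computation, exploiting a detailed-balance identity peculiar to the Gaussian $\lambda$, and then read off the lower bound on $\alpha$ from Lemma \ref{lem}. First I would note that $\lambda$ is constant along the transport characteristics because $\lambda(x+t\xi,\xi,t) = e^{-|x|^2 - |\xi|^2}$, which both confirms that $\lambda$ solves the linear transport equation and shows that the denominator $\lambda(x,\xi,0)$ appearing in (4.1)--(4.2) cancels exactly against one of the two factors of $\lambda$ that survive inside $Q^{\pm}(\lambda,\lambda)$ along characteristics.

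The central observation is a detailed-balance identity: by expanding $|x - t\xi'|^2 + |x - t\xi'_*|^2$ and invoking the collisional conservation laws $\xi + \xi_* = \xi' + \xi'_*$ and $|\xi|^2 + |\xi_*|^2 = |\xi'|^2 + |\xi'_*|^2$, one obtains
\begin{equation*}
|x - t\xi'|^2 + |x - t\xi'_*|^2 \;=\; |x - t\xi|^2 + |x - t\xi_*|^2,
\end{equation*}
so that $\lambda(y,\xi',\tau)\lambda(y,\xi'_*,\tau) = \lambda(y,\xi,\tau)\lambda(y,\xi_*,\tau)$ for any $(y,\tau)$. Combined with the identity $|{\bf n}\cdot(\xi' - \xi'_*)| = |{\bf n}\cdot(\xi - \xi_*)|$ this yields $Q^{+}(\lambda,\lambda) = Q^{-}(\lambda,\lambda)$ pointwise, collapsing the two suprema in Definition \ref{isoradius} to a single Gaussian estimate.

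Next I would perform the angular integral $\int_{S^{n-1}}|{\bf n}\cdot(\xi - \xi_*)| \, d{\bf n}$ using rotational invariance (it equals a dimensional constant times $|\xi - \xi_*|$), and then evaluate $Q^{-}(\lambda,\lambda)$ along the characteristic $(x + t\xi, \xi, t)$. Since $\lambda(x+t\xi,\xi,t) = e^{-|x|^2 - |\xi|^2}$ while $\lambda(x+t\xi,\xi_*,t) = e^{-|x + t\xi - t\xi_*|^2 - |\xi_*|^2}$, dividing by $\lambda(x,\xi,0)$ cancels the Gaussian prefactor and leaves precisely the integrand of $\Omega$ from (4.11). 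The finiteness granted by part (ii) of Lemma \ref{lem} then verifies condition (4.1), and combining the angular constant with the definition of $1/(2\alpha)$ yields the advertised lower bound on $\alpha$.

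For the bounded-set decay condition (4.2), the identical reduction replaces $\Omega$ by $\Omega_{D}(z)$, and the explicit estimate in part (i) of Lemma \ref{lem} shows $\Omega_{D}(z) \to 0$ as $z \to \infty$, completing the verification that $\lambda$ is a scattering frame. The main obstacle---and essentially the only place where anything beyond bookkeeping is required---is the detailed-balance identity above; everything downstream is Gaussian calculus and a careful match of constants between the sphere integral and the definition of $\Omega$.
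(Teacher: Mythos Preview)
Your proposal is correct and follows essentially the same route as the paper: the key step in both is the Gaussian collisional identity (what you call detailed balance) coming from $\xi+\xi_*=\xi'+\xi'_*$ and $|\xi|^2+|\xi_*|^2=|\xi'|^2+|\xi'_*|^2$, after which the gain and loss terms reduce to the same integral and Lemma~\ref{lem} handles both (4.1) and (4.2). The only cosmetic difference is that you phrase the identity as the pointwise equality $Q^{+}(\lambda,\lambda)=Q^{-}(\lambda,\lambda)$, whereas the paper bounds the two terms separately and obtains the same estimate $V_{n-1}\Omega$ for each; note that the paper gets the factor $V_{n-1}$ from the crude inequality $|{\bf n}\cdot(\xi-\xi_*)|\leq|\xi-\xi_*|$ rather than an exact evaluation of the angular integral, so to land on the stated constant $\tfrac{1}{4V_{n-1}\Omega}$ you should use that inequality rather than the sharper exact value.
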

\begin{proof}
We will show that distribution $\lambda$ satisfies conditions of Definition \ref{scatteringframe}. Start with the estimation below for the loss term:
\begin{multline*}
      \sup_{x,\xi}\frac{1}{\lambda(x,\xi,0)}\chi_{D}(x)\int_{z}^{\infty}Q^{-}(\lambda,\lambda) (x+s\xi, \xi, s)ds \\ \leq  V_{n-1}  \sup_{x,\xi}\chi_{D}(x)\int_{z}^{\infty} \int_{{\rm I\!R}^n}  e^{-|x + s\xi - s\xi_{*}|^2 - |\xi_{*}|^2} |\xi - \xi_*| \ d\xi_{*} ds =
    V_{n-1}\Omega_D(z)
\end{multline*}
Continue by estimating the gain term: 
\begin{multline*}     
  \sup_{x,\xi}\frac{1}{\lambda(x,\xi,0)}\chi_{D}(x)\int_{z}^{\infty}Q^{+}(\lambda, \lambda)(x+s\xi, \xi, s)ds \\ \leq  \sup_{x,\xi}\chi_{D}(x)\int_{z}^{\infty}\int_{S^{n - 1}}\int_{{\rm I\!R}^n}  e^{|x|^2 + |\xi|^2} e^{-|x + s\xi - s\xi^{'}_{*}|^2 - |\xi^{'}_{*}|^2} \\ \times e^{-|x + s\xi - s\xi^{'}|^2 - |\xi^{'}|^2}|(\xi - \xi_{*}).{\bf n}| \  d\xi_{*} d{\bf n} ds
\end{multline*}
We will substitute the identities below in the previous inequality. These identities are results of the conservation laws:
\begin{gather*}
    |x + s\xi - s\xi^{'}_{*}|^2 +|x + s\xi - s\xi^{'}|^2 = |x|^2 + |x + s\xi - s\xi_*|^2 \\
    |\xi|^2 + |\xi_{*}|^2 = |\xi^{'}|^2 + |\xi^{'}_{*}|^2
\end{gather*}
We deduce the following bound for the gain term:
\begin{multline*}
 \sup_{x,\xi}\frac{1}{\lambda(x,\xi,0)}\chi_{D}(x)\int_{z}^{\infty}Q^{+}(\lambda, \lambda)(x+s\xi, \xi, s)ds \\ \leq   V_{n-1} \sup_{x, \xi} \chi_{D}(x)\int_{z}^{\infty} \int_{{\rm I\!R}^n} e^{-|x + s\xi - s\xi_{*}|^2 - |\xi_{*}|^2} |\xi - \xi_*| \ d\xi_{*}ds = V_{n-1}\Omega_D(z)
\end{multline*}
\noindent Continue with combining the estimates for gain and loss terms:
\begin{multline*}
    \sup_{x,\xi}\frac{1}{\lambda(x,\xi,0)}\chi_{D}(x)\int_{z}^{\infty}Q^{+}(\lambda, \lambda)(x + \xi s, \xi, s)ds\\+ \sup_{x,\xi}\frac{1}{\lambda(x,\xi,0)}\chi_{D}(x)\int_{z}^{\infty}Q^{-}(\lambda, \lambda)(x + \xi s, \xi, s)ds \leq 2V_{n-1}\Omega_D(z) 
\end{multline*}
Therefore:
\begin{multline*}
    \lim_{z\rightarrow \infty}\big(\sup_{x,\xi}\frac{1}{\lambda(x,\xi,0)}\chi_{D}(x)\int_{z}^{\infty}Q^{+}(\lambda, \lambda)(x + \xi s, \xi, s)ds\\+ \sup_{x,\xi}\frac{1}{\lambda(x,\xi,0)}\chi_{D}(x)\int_{z}^{\infty}Q^{-}(\lambda, \lambda)(x + \xi s, \xi, s)ds\big) \leq \lim_{z\rightarrow \infty}2V_{n-1}\Omega_D(z)  = 0 
\end{multline*}
Similarly we have:
\begin{multline*}
    \sup_{x,\xi}\frac{1}{\lambda(x,\xi,0)}\int_{0}^{\infty}Q^{+}(\lambda, \lambda)(x + \xi s, \xi, s)ds+\\ \sup_{x,\xi}\frac{1}{\lambda(x,\xi,0)}\int_{0}^{\infty}Q^{-}(\lambda, \lambda)(x + \xi s, \xi, s)ds \leq 2V_{n-1}\Omega < \infty
\end{multline*}
We just showed that $\lambda$ satisfies conditions (4.1) and (4.2) and therefore is a scattering frame. Recall Definition \ref{isoradius}, it follows from the previous computation that:
\begin{multline*}
     \frac{1}{2\alpha}= \sup_{x,\xi}\frac{1}{\lambda(x,\xi,0)}\int_{0}^{\infty}Q^{+}(\lambda, \lambda)(x + \xi s, \xi, s)ds+\\ \sup_{x,\xi}\frac{1}{\lambda(x,\xi,0)}\int_{0}^{\infty}Q^{-}(\lambda, \lambda)(x + \xi s, \xi, s)ds \leq 2V_{n-1}\Omega
\end{multline*}
Finally we deduce the bound below for the isotropic radius of $\lambda$:
\begin{align*}
    \frac{1}{4\Omega V_{n-1}} \leq \alpha
\end{align*}
\end{proof}
\printbibliography
\end{document}